\def\refer#1{~\ref{#1}}
\def\refeq#1{~(\ref{#1})}
\def\ccite#1{~\cite{#1}}
\def\inte#1{
\displaystyle\mathop{#1\kern0pt}^\circ }
\let\pa=\partial
\let\p=\partial
\let\al=\alpha
\let\d=\delta
\let\r=\rho
\let\s=\sigma
\let\f=\frac
\let\th=\theta
\let\D=\Delta
\let\wt=\widetilde
\let\wh=\widehat
\def\cB{{\mathcal B}}
\def\cC{{\mathcal C}}
\def\cF{{\mathcal F}}
\def\cH{{\mathcal H}}
\def\cS{{\mathcal S}}
\def\cV{{\mathcal V}}
\def\grad{\nabla}
\def\dH{\dot{H}}
\def\dB{\dot{B}}
\def\h{\frak h}
\def\virgp{\raise 2pt\hbox{,}}
\def\cdotpv{\raise 2pt\hbox{;}}
\def\eqdefa{\buildrel\hbox{\footnotesize def}\over =}
\def\Id{\mathop{\rm Id}\nolimits}
\def\C{\mathop{\mathbb C\kern 0pt}\nolimits}
\def\DD{\mathop{\mathbb D\kern 0pt}\nolimits}
\def\EE{\mathop{  {\mathbb E \kern 0pt}}\nolimits}
\def\K{\mathop{\mathbb K\kern 0pt}\nolimits}
\def\N{\mathop{\mathbb N\kern 0pt}\nolimits}
\def\Q{\mathop{\mathbb Q\kern 0pt}\nolimits}
\def\R{\mathop{\mathbb R\kern 0pt}\nolimits}
\def\SS{\mathop{\mathbb S\kern 0pt}\nolimits}
\def\ZZ{\mathop{\mathbb Z\kern 0pt}\nolimits}
\def\TT{\mathop{\mathbb T\kern 0pt}\nolimits}
\newcommand{\ds}{\displaystyle}
\newcommand{\Z}{{\ZZ}}
\newcommand{\Rmnum}[1]{\uppercase\expandafter{\romannumeral #1} }
 \numberwithin{equation}{section}
\def\dive{\mathop{\rm div}\nolimits}
\def\curl{\mathop{\rm curl}\nolimits}
\def\Supp{\mathop{\rm Supp}\nolimits\ }
\def\no{\noindent}
\def\na{\nabla}
\def\vhcurl{v^{\rm h}_{\rm curl}}
\def\vcurl{v^{\rm h}_{\rm curl}}
\def\nablah{\nabla_{\rm h}}
\def\vhdiv{v^{\rm h}_{\rm div}}
\def\vdiv{v^{\rm h}_{\rm div}}
\def\p3{\partial_3}
\def\ph{\partial_{\rm h}}
\def\divh{{\rm div}_{\rm h}}
\def\Laplacianh{\Delta_{\rm h}^{-1}}
\def\u3{u^3}
\def\b3{b^3}
\def\vh{v^{\rm h}}
\def\vl{v^{\ell}}
\def\h{{\rm h}}
\def\v{{\rm v}}
\def\Rh{R^{\rm h}}
\def\Rv{R^{\rm v}}
\def\Th{T^{\rm h}}
\def\Thb{\overline{T}^{\rm h}}
\def\Tv{T^{\rm v}}
\def\Tvb{\overline{T}^{\rm v}}
\def\Lh{L_{\rm h}}
\def\Lv{L_{\rm v}}
\def\xih{\xi_{\rm h}}
\def\om3{\omega^3}
\def\omr{\omega_{\frac r2}}
\def\omro{\omega_{r-1}}
\def\htr{\cH^{\theta,r}}
\def\htrps{\cH^{\theta,r}_{p,s_2}}
\def\dhk{\Delta_k^{\rm h}}
\def\dhkp{\Delta_{k'}^{\rm h}}
\def\Shk{S_{k-1}^{\rm h}}
\def\Svl{S_{\ell-1}^{\rm v}}
\def\Shkp{S_{k'-1}^{\rm h}}
\def\Svlp{S_{\ell'-1}^{\rm v}}
\def\dvl{\Delta_\ell^{\rm v}}
\def\dvlp{\Delta_{\ell'}^{\rm v}}
\def\dj{\Delta_j}
\def\dtj{\widetilde{\Delta}_j}
\def\th{\theta}
\def\Bs{\bigl(\dB^{0}_{s_1,\infty}\bigr)_{\rm{h}}\bigl(\dB^{\frac1p+3\al(r)}_{s_2,\infty}\bigr)_{\rm{v}}}
\def\Bsfg{\bigl(\dB^{-6\al(r)+2\th}_{s_1',1}\bigr)_{\rm{h}}
\bigl(\dB^{\frac1{p'}-3\al(r)-2\th}_{s_2',1}\bigr)_{\rm{v}}}
\newcommand{\beq}{\begin{equation}}
\newcommand{\eeq}{\end{equation}}
\newcommand{\ben}{\begin{eqnarray}}
\newcommand{\een}{\end{eqnarray}}
\newcommand{\beno}{\begin{eqnarray*}}
\newcommand{\eeno}{\end{eqnarray*}}
\newcommand{\andf}{\quad\hbox{and}\quad}
\newcommand{\with}{\quad\hbox{with}\quad}
\newtheorem{defi}{Definition}[section]
\newtheorem{thm}{Theorem}[section]
\newtheorem{lem}{Lemma}[section]
\newtheorem{rmk}{Remark}[section]
\newtheorem{prop}{Proposition}[section]
\renewcommand{\theequation}{\thesection.\arabic{equation}}
\begin{document}
\title[]
{Critical one component anisotropic regularity for 3-D
Navier-Stokes system}

\author[Y. Liu]{Yanlin Liu}
\address[Y. Liu]{department of mathematical sciences, university of science and technology of china, hefei 230026, china,
and Academy of Mathematics $\&$ Systems Science, The Chinese Academy of
Sciences, Beijing 100190, CHINA.} \email{liuyanlin3.14@126.com}
\author[P. Zhang]{Ping Zhang} \address[P. Zhang]{Academy of Mathematics $\&$ Systems Science
and  Hua Loo-Keng Key Laboratory of Mathematics, The Chinese Academy of
Sciences, Beijing 100190, CHINA, and School of Mathematical Sciences, University of Chinese Academy of Sciences, Beijing 100049, China.} \email{zp@amss.ac.cn}

\date{\today}

\begin{abstract}  Let us consider an initial data~$v_0$  for the classical
 3D Navier-Stokes equation with vorticity  belonging to~$L^{\frac 32}\cap L^2$.
  We prove that if the solution associated with~$v_0$ blows up at a finite time~$T^\star$,
  then for any $p\in]4,\infty[,~q_1\in[1,2[,~\mu>0,
~q_2\in\bigl[2,\bigl(1/p+\mu\bigr)^{-1}\bigr[,~\kappa\in ]1,\infty[$,
and any unit vector $e$,
   the~$L^p$ estimate in time of $\bigl\|(v(t)|e)_{\R^3}\bigr\|_{L^{\frac{3p}{p-2}}}^p
+\bigl\|(v(t)|e)_{\R^3}\bigr\|^p_{
\bigl(\dB^{\mu+\f2p+\f2{q_1}-1}_{q_1,\kappa}\bigr)_\h
\bigl(\dB^{\f1{q_2}-\mu}_{q_2,\kappa}\bigr)_\v}$ blows up at~$T^\star$.
\end{abstract}

\maketitle

\noindent {\sl Keywords:} Navier-Stokes Equations,
Blow-up criteria, Anisotropic Littlewood-Paley Theory

\vskip 0.2cm
\noindent {\sl AMS Subject Classification (2000):} 35Q30, 76D03  \
\setcounter{equation}{0}
\section{Introduction}
In this paper,  we investigate  necessary  conditions for  breakdown of regularity of regular solutions to
the following 3-D  incompressible Navier-Stokes system
\begin{equation*}
(NS)\qquad \left\{\begin{array}{l}
\displaystyle \pa_t v + \dive (v\otimes v) -\D v+\grad p=0, \qquad (t,x)\in\R^+\times\R^3, \\
\displaystyle \dive\, v = 0, \\
\displaystyle  v|_{t=0}=v_0,
\end{array}\right. \label{1.1}
\end{equation*}
where $v=(v^1,v^2, v^3)$ stands for the  fluid  velocity and
$p$ for the scalar pressure function, which guarantees the divergence free condition of the velocity field.

 Let us sum up the fact about this theory introduced in\ccite{fujitakato}
 that will be relevant in our work.
\begin{thm}
\label{fujitakato+}
{\sl Let~$v_0$ be in the homogenneous Sobolev
space~$\dot H^{\frac 12}$. Then there exists a unique  maximal
solution~$v$ in the space~$ C([0,T^\ast[;\dH ^{\frac 1 2   })\cap
L^2_{\rm loc}([0,T^\star[;\dH ^{\frac 32})$. Moreover, if~$T^\star$ is finite,
then for any~$p$ in~$[2,\infty[$, there holds
\beq
\label{blowupbasic}
\int_0^{T^\star}\|v(t,\cdot)\|_{\dH^{\frac 1 2   +\frac 2 p}}^p\,dt=\infty.
\eeq }
\end{thm}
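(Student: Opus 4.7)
The plan is first to establish local well-posedness through the classical Fujita-Kato fixed-point argument, and then to derive the blow-up criterion \eqref{blowupbasic} by a contrapositive energy argument combined with interpolation. For the well-posedness step, I would work in $X_T \eqdef C([0,T];\dH^{\f12})\cap L^2([0,T];\dH^{\f32})$ and look for a fixed point of
$$\Phi(v)(t) \eqdef e^{t\D}v_0 - \int_0^t e^{(t-s)\D}\,\mathbb P\,\dive(v\otimes v)(s)\,ds,$$
with $\mathbb P$ the Leray projector. Standard smoothing estimates for the heat semigroup combined with the bilinear continuity of $(u,w)\mapsto \int_0^t e^{(t-s)\D}\mathbb P\,\dive(u\otimes w)(s)\,ds$ from $X_T\times X_T$ into $X_T$ provide a unique maximal solution on $[0,T^\star[$, together with the continuation principle: if $\sup_{t<T^\star}\|v(t)\|_{\dH^{\f12}}<\infty$, then $v$ can be extended beyond $T^\star$.

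For the criterion \eqref{blowupbasic}, I would argue by contrapositive: assume $T^\star<\infty$ and $\int_0^{T^\star}\|v(t)\|_{\dH^{\f12+\f2p}}^p\,dt<\infty$ for some $p\in[2,\infty[$, and show that $v\in L^\infty([0,T^\star[;\dH^{\f12})$. Taking the $\dH^{\f12}$ inner product of $(NS)$ with $v$ yields the energy identity
$$\f12\f{d}{dt}\|v\|_{\dH^{\f12}}^2+\|v\|_{\dH^{\f32}}^2 = -\bigl((v\cdot\grad)v,\,v\bigr)_{\dH^{\f12}},$$
and the crucial step is the trilinear bound
$$\bigl|\bigl((v\cdot\grad)v,v\bigr)_{\dH^{\f12}}\bigr|\le C\,\|v\|_{\dH^{\f12+\f2p}}\,\|v\|_{\dH^{\f12}}^{\f2p}\,\|v\|_{\dH^{\f32}}^{2-\f2p},$$
which I would obtain from Bony's paraproduct decomposition of $v\otimes v$, the product laws in homogeneous Sobolev spaces, and the interpolation $\|v\|_{\dH^{\f32-\f2p}}\le\|v\|_{\dH^{\f12}}^{\f2p}\|v\|_{\dH^{\f32}}^{1-\f2p}$. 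When $p>2$, Young's inequality with exponents $p/(p-1)$ and $p$ absorbs the $\|v\|_{\dH^{\f32}}^2$ factor and gives
$$\f{d}{dt}\|v\|_{\dH^{\f12}}^2+\|v\|_{\dH^{\f32}}^2\le C\,\|v\|_{\dH^{\f12+\f2p}}^p\,\|v\|_{\dH^{\f12}}^2,$$
so a Gronwall argument yields $\|v(t)\|_{\dH^{\f12}}^2\le\|v_0\|_{\dH^{\f12}}^2\exp\bigl(C\int_0^t\|v(s)\|_{\dH^{\f12+\f2p}}^p\,ds\bigr)$, bounded on $[0,T^\star[$, contradicting the continuation principle. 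The limiting case $p=2$ is immediate from the built-in regularity $v\in L^2_{\rm loc}([0,T^\star[;\dH^{\f32})$ together with the same continuation principle.

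The main obstacle is the trilinear estimate uniformly in $p\in[2,\infty[$. As $p\to\infty$ the index $\f12+\f2p$ tends to the critical value $\f12$ and the margin in the product laws shrinks, so one must treat the low-high, high-low, and remainder paraproducts separately and use $\ell^2$-summability of the Littlewood-Paley blocks to avoid logarithmic losses; this is precisely where Bony's decomposition together with Bernstein's inequality must be applied with care. All other ingredients are standard, and once the trilinear estimate is in hand the theorem follows.
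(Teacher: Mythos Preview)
The paper does not prove Theorem~\ref{fujitakato+}; it is quoted as a known result from Fujita--Kato~\cite{fujitakato} and serves only as background for the paper's new one-component criteria. Your outline is the standard argument and is essentially correct, but one point deserves care: the continuation principle you state, ``if $\sup_{t<T^\star}\|v(t)\|_{\dH^{1/2}}<\infty$ then $v$ extends beyond $T^\star$'', is \emph{not} what the Picard iteration in the critical space $X_T$ yields---that statement is the deep endpoint result of Kenig--Koch~\cite{KK11} (equivalently Escauriaza--Seregin--\v{S}ver\'ak~\cite{ISS}). What the fixed-point construction actually gives is the weaker (and sufficient) criterion that finiteness of $\|v\|_{L^4([0,T^\star[;\dH^1)}$, or equivalently of $\|v\|_{L^2([0,T^\star[;\dH^{3/2})}$, permits extension. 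Fortunately your Gronwall argument for $p>2$ already produces the bound $\int_0^{T^\star}\|v\|_{\dH^{3/2}}^2\,dt<\infty$ along with the $L^\infty_t\dH^{1/2}$ bound, so the correct (weaker) continuation principle applies and the contradiction goes through; similarly for $p=2$ the hypothesis $\int_0^{T^\star}\|v\|_{\dH^{3/2}}^2<\infty$ feeds directly into that principle. Also, your worry about uniformity as $p\to\infty$ is misplaced: for each fixed $p>2$ the product law $\dH^{1/2+2/p}\times\dH^{3/2-2/p}\hookrightarrow\dH^{1/2}$ holds with a finite constant $C_p$, and the theorem only asserts the criterion for each $p$ separately; the delicate endpoint is rather $p\to 2^+$, where one of the indices approaches $3/2$, but that case is handled directly as you note.
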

The endpoint   case when~$p= \infty$ in the above theorem  was proved by Kenig and Koch in \cite{KK11},  that is
 if the lifespan $T^\star$ is finite, then~$\ds \limsup_{t\rightarrow T^\star}
\|v(t)\|_{\dot H^{\frac 12}}$ is infinite. This end point case can also be viewed as a consequence of the work\ccite{ISS}  of Escauriaza,  Seregin and {S}ver\'{a}k,
where the authors proved that  if the lifespan $T^\star$ is finite, then~$\ds \limsup_{t\rightarrow T^\star}
\|v(t)\|_{L^3}$ is infinite.

Before preceding, we recall
the following family of spaces from \cite{CZZ}
\begin{defi}
\label{definVorticiyspaces} {\sl For~$r$ in~$\bigl]\frac 32,
2\bigr]$, we denote by~$\cV^r$ the space of divergence free vector
fields with the vorticity of which belongs to~$L^{\frac 32}\cap
L^r$. }
\end{defi}

In ~\cite{CZ5}, Chemin and the second author proved  the following component-reduction version
of Theorem \ref{fujitakato+}:

\begin{thm}
\label{thmCZ5} {\sl Let $v$ be the  unique maximal solution~$v$ of $(NS)$
associated with~$v_0\in \cV^{\frac32}.$
If its lifespan ~$T^\star$ is finite,  then for any~$p\in]4,6[$ and any unit vector~$e$, we have
\begin{equation}\label{blowupCZ5}
\int_0^{T^\star}\|(v(t)|e)_{\R^3}\|_{\dH^{\frac 1 2   +\frac 2 p }}^p\,dt=\infty.
\end{equation}
}
\end{thm}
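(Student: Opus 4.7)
The plan is to argue by contradiction, modeled on the Fujita--Kato strategy of Theorem \ref{fujitakato+}: assume that the integral in \refeq{blowupCZ5} is finite and show that $v$ can be continued past $T^\star$ in a subcritical norm, which contradicts the blow-up criterion \refeq{blowupbasic}. Without loss of generality, rotate coordinates so that $e=e_3$, and set $v^{\rm h}=(v^1,v^2)$, $v^3=(v|e_3)_{\R^3}$. Thus one may assume $\int_0^{T^\star}\|v^3(t)\|_{\dH^{\f12+\f2p}}^p\,dt<\infty$, and the goal becomes an a priori bound of $v$ up to $T^\star$ in, say, $L^\infty(\dH^{\f12})$.

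The structural input that makes one-component control viable is the equation for the vertical vorticity $\omega^3=\partial_1 v^2-\partial_2 v^1$, namely
\begin{equation*}
\partial_t\omega^3+v\cdot\nabla\omega^3-\Delta\omega^3
=\omega^{\rm h}\cdot\nabla_{\rm h}v^3+\omega^3\,\partial_3 v^3,
\end{equation*}
whose right-hand side contains only first derivatives of $v^3$. Coupled with the divergence-free constraint $\partial_3v^3=-\mathrm{div}_{\rm h}v^{\rm h}$, the horizontal Helmholtz decomposition gives algebraically
\begin{equation*}
v^{\rm h}=-\nabla_{\rm h}\Delta_{\rm h}^{-1}\partial_3 v^3+\nabla_{\rm h}^\perp\Delta_{\rm h}^{-1}\omega^3,
\end{equation*}
so the full velocity $v$ is controlled, in horizontal-critical norms, by the pair $(v^3,\omega^3)$. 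The role of the assumption $v_0\in\cV^{3/2}$ is then to provide $\omega_0\in L^{3/2}$ as a critical starting datum for a propagation estimate on $\omega^3$.

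The technical core of the proof is an $L^{3/2}$-energy inequality for $\omega^3$. Testing the above transport--diffusion equation against $|\omega^3|^{-1/2}\omega^3$, the transport term drops by incompressibility, the Laplacian yields a dissipation comparable to $\|\nabla|\omega^3|^{3/4}\|_{L^2}^2$, and the two nonlinear terms are to be estimated by a combination of H\"older, Sobolev embedding, and interpolation between the $L^\infty_tL^{3/2}$ a priori information on $\omega$ coming from the vorticity $L^{3/2}\cap L^2$ hypothesis and the dissipation gain. The schematic bound one targets is
\begin{equation*}
\f{d}{dt}\|\omega^3\|_{L^{3/2}}^{3/2}+c\,\|\nabla|\omega^3|^{3/4}\|_{L^2}^2
\lesssim \|v^3\|_{\dH^{\f12+\f2p}}^p\,\|\omega^3\|_{L^{3/2}}^{3/2}+\text{absorbable terms},
\end{equation*}
after which Gronwall against the assumed-finite $L^p_t$ norm of $v^3$ yields $\omega^3\in L^\infty_T(L^{3/2})$ for every $T<T^\star$. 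Substituting into the Helmholtz decomposition recovers $v^{\rm h}$, and hence $v$, in a space subcritical enough to re-enter Theorem \ref{fujitakato+} and extend the lifespan beyond $T^\star$.

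The main obstacle is precisely the closing of this $L^{3/2}$-estimate. The stretching term $\omega^{\rm h}\cdot\nabla_{\rm h}v^3$ is the difficult one: it couples the \emph{full} horizontal vorticity to a horizontal derivative of $v^3$, and the regularity $\dH^{\f12+\f2p}$ of $v^3$ is \emph{supercritical} at the scaling of $v$. One has to exploit that only horizontal derivatives of $v^3$ appear, invoke anisotropic Sobolev embeddings, and interpolate the dissipation term against the $L^\infty_t L^{3/2}$ control of $\omega$ to distribute derivatives between the two factors. The range $p\in]4,6[$ should emerge as the set of exponents for which this two-sided interpolation actually closes: $p>4$ gives enough time-integrability of $\|v^3\|_{\dH^{1/2+2/p}}$ to be absorbed after Gronwall, while $p<6$ keeps the spatial H\"older pairing compatible with the dissipation gain $|\omega^3|^{3/4}\in L^2_tH^1_x$. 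Extending the range beyond $6$ is presumably what forces the authors, in the main theorem, to replace isotropic $\dH^s$ by the anisotropic Besov scale in horizontal and vertical directions.
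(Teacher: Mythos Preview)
This theorem is quoted from \cite{CZ5} and is not proved in the present paper; however, the strategy of Sections~2--4 (which proves the sharper Theorem~\ref{thmmain1}) makes clear what the actual argument of \cite{CZ5} is, and your proposal is missing its central ingredient.

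The gap is that the $L^{3/2}$ energy estimate on $\omega^3$ is \emph{not closed on its own}. In the stretching term $\partial_2v^3\partial_3v^1-\partial_1v^3\partial_3v^2$ you must control $\partial_3v^{\rm h}$, and via the horizontal Biot--Savart decomposition \refeq{a.1wrt} this splits as $\partial_3v^{\rm h}_{\rm curl}\sim\nabla_{\rm h}^\perp\Delta_{\rm h}^{-1}\partial_3\omega$ (handled by the dissipation on $\omega$) plus $\partial_3v^{\rm h}_{\rm div}\sim-\nabla_{\rm h}\Delta_{\rm h}^{-1}\partial_3^2v^3$. The latter brings in $\partial_3^2v^3$, which is \emph{not} controlled by the assumption on $v^3$ nor by the $\omega$-estimate. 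In \cite{CZ5} (and in this paper, see Propositions~\ref{prop2.1}--\ref{prop2.2}) one therefore runs a \emph{second} energy estimate, on $\partial_3v^3$ in the anisotropic negative Sobolev space $\cH^{\theta,r}$, using the $\partial_3v^3$--equation of $(\wt{NS})$. The two estimates are genuinely coupled and are closed together (Proposition~\ref{prop6.1} here). Your proposal treats the $\omega^3$ equation as if it were autonomous, which it is not.

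Two further points. First, there is no ``$L^\infty_tL^{3/2}$ a priori information on $\omega$'' available from the hypothesis $v_0\in\cV^{3/2}$; that hypothesis gives only $\Omega_0\in L^{3/2}$, and propagating $\omega\in L^\infty_tL^{3/2}$ is precisely the content of the argument, not an input to it. Second, the endgame is not a return to Theorem~\ref{fujitakato+} but rather the nine-component blow-up criterion of \cite{CZ5} (Theorem~\ref{thm6.1} here): once $\omega\in L^\infty_TL^r$, $\nabla\omega_{r/2}\in L^2_TL^2$, and $\nabla\partial_3v^3\in L^2_T\cH^{\theta,r}$ are established, one bounds each $\|\partial_\ell v^k\|_{\cB_{p_{k,\ell}}}$ and concludes $T^\star=\infty$.
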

Recently in \cite{CZZ}, the blow-up criteria \eqref{blowupCZ5} was improved
to  $p\in]4,\infty[$ provided that the initial data $v_0$ belongs to a more regular space $\cV^{\f32}\cap \cV^{2}$. While the authors in \cite{LeiZhao} dealt with the remaining case $p\in[2,4]$.
One may check \cite{CZ5} for more references concerning the regularity criteria of solutions to three-dimensional Navier-Stokes system.

Another improvement of Theorem \ref{fujitakato+} is the well-known
Ladyzhenskaya-Prodi-Serrin criteria, more precisely, if the life-span $T^\ast$ for smooth enough solutions of $(NS)$ is finite, then
\beq
\label{blowupLPS}
\int_0^{T^\star}\|v(t,\cdot)\|_{L^q}^p\,dt=\infty,\quad\mbox{where}\quad \frac2p+\frac3q=1.
\eeq
We point out that, in \eqref{blowupLPS}, there is no requirement on the derivative estimate of the solution $v.$ Whereas
 the one component version \eqref{blowupCZ5} requires  more than half derivative  estimate on $v.$
The purpose of this paper is to reduce the order of derivative estimate in \eqref{blowupCZ5}.

Let us mention that,
as in\ccite{CDGG}, \ccite{CZ1}, \ccite{CZ5} and \ccite{Pa02}, the
definitions of the function spaces we are going to work with require
anisotropic dyadic decomposition of the Fourier variables.
Let us first recall some basic facts on anisotropic Littlewood-Paley theory from \cite{BCD}
\begin{equation}\begin{split}\label{defparaproduct}
&\Delta_ja=\cF^{-1}(\varphi(2^{-j}|\xi|)\widehat{a}),
 \quad \Delta_k^{\rm h}a=\cF^{-1}(\varphi(2^{-k}|\xi_{\rm h}|)\widehat{a}),
 \quad \Delta_\ell^{\rm v}a =\cF^{-1}(\varphi(2^{-\ell}|\xi_3|)\widehat{a}),\\
&S_ja=\cF^{-1}(\chi(2^{-j}|\xi|)\widehat{a}),
\quad S^{\rm h}_ka=\cF^{-1}(\chi(2^{-k}|\xi_{\rm h}|)\widehat{a}),
\quad\ S^{\rm v}_\ell a =\cF^{-1}(\chi(2^{-\ell}|\xi_3|)\widehat{a}) \quad \mbox{and}\\
&\dtj=\Delta_{j-1}+\Delta_j +\Delta_{j+1},
\quad \widetilde{\Delta}_k^{\rm h}=\Delta_{k-1}^{\rm h}+\Delta_k^{\rm h}+\Delta_{k+1}^{\rm h},
\quad \widetilde{\Delta}_\ell^{\rm v}=\Delta_{\ell-1}^{\rm v}+\Delta_\ell^{\rm v}+\Delta_{\ell+1}^{\rm v},
\end{split}\end{equation}
where $\xi_{\rm h}=(\xi_1,\xi_2),$ $\cF a$ and
$\widehat{a}$ denote the Fourier transform of the distribution $a,$
$\chi(\tau)  $ and~$\varphi(\tau)$ are smooth functions such that
 \beno
&&\Supp \varphi \subset \Bigl\{\tau \in \R\,/\  \ \frac34 \leq
|\tau| \leq \frac83 \Bigr\}\andf \  \ \forall
 \tau>0\,,\ \sum_{j\in\Z}\varphi(2^{-j}\tau)=1,\\
&&\Supp \chi \subset \Bigl\{\tau \in \R\,/\  \ \ |\tau|  \leq
\frac43 \Bigr\}\quad \ \ \ \andf \  \ \forall
 \tau\in\R\,,\  \chi(\tau)+ \sum_{j\geq
0}\varphi(2^{-j}\tau)=1.
 \eeno

 \begin{defi}\label{anibesov}
{\sl Let us define the space $\bigl(\dB^{s_1}_{p_1,r_1}\bigr)_{\rm
h}\bigl(\dB^{s_2}_{p_2,r_2}\bigr)_{\rm v}$ as the space of
distribution in~$\cS'_h$  such that
$$
\|u\|_{\bigl(\dB^{s_1}_{p_1,r_1}\bigr)_{\rm
h}\bigl(\dB^{s_2}_{p_2,r_2}\bigr)_{\rm v}}\eqdefa \biggl(\sum_{k\in\Z}
2^{r_1ks_1} \Bigl(\sum_{\ell\in\Z}2^{r_2\ell s_2}\|\D_k^{\rm
h}\D_\ell^{\rm
v}u\|_{\Lh^{p_1}\Lv^{p_2}}^{r_2}\Bigr)^{{r_1}/{r_2}}\biggr)^{1/{r_1}}
$$
is finite. When $p_1=p_2=p$, $r_1=r_2=r$, we briefly denote $\bigl(\dB^{s_1}_{p,r}\bigr)_{\rm
h}\bigl(\dB^{s_2}_{p,r}\bigr)_{\rm v}$ as $\dB^{s_1,s_2}_{p,r}$.
}
\end{defi}

The main result of this paper states as follows:

\begin{thm}\label{thmmain}
{\sl Let $v$ be the unique maximal solution of $(NS)$
associated with initial data $v_0\in\cV^{2}.$
If its lifespan $T^\star$ is finite, then for any $p\in]4,\infty[,~q_1\in[1,2[,~\mu>0,
~q_2\in\bigl[2,\bigl(1/p+\mu\bigr)^{-1}\bigr[,~\kappa\in ]1,\infty[$,
and any unit vector $e$, there must hold
\begin{equation}\label{blowupmain}
\int_0^{T^\star}\Bigl(\bigl\|(v(t)|e)_{\R^3}\bigr\|_{L^{\frac{3p}{p-2}}}^p
+\bigl\|(v(t)|e)_{\R^3}\bigr\|^p_{
\bigl(\dB^{\mu+\f2p+\f2{q_1}-1}_{q_1,\kappa}\bigr)_\h
\bigl(\dB^{\f1{q_2}-\mu}_{q_2,\kappa}\bigr)_\v}\Bigr)\,dt=\infty.
\end{equation}
}\end{thm}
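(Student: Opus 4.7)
The argument proceeds by contradiction: suppose the left-hand side of \eqref{blowupmain} is finite; then one aims to propagate a subcritical norm of $v$ up to $T^\star$ so as to extend the solution beyond this time, which contradicts the definition of $T^\star$. By the rotation invariance of $(NS)$ there is no loss of generality in taking $e=e_3$, so the hypothesis bounds $v^3$ both in the Ladyzhenskaya--Prodi--Serrin-scaled norm $L^p_T(L^{\frac{3p}{p-2}})$ and in the anisotropic Besov norm appearing in \eqref{blowupmain}. Since $v_0\in \cV^{2}$, it suffices, following the continuation scheme of~\cite{CZZ}, to establish a uniform bound $\omega\in L^\infty([0,T^\star[;L^{\frac32}\cap L^2)$.

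The guiding structural observation is that $\divh\vh=-\p3 v^3$ together with the vertical vorticity $\om3=\p1 v^2-\p2 v^1$ recovers the horizontal velocity through a two-dimensional Biot--Savart formula
\[
\vh=\nh^{\perp}\Laplacianh\om3-\nh\Laplacianh\p3 v^3,
\]
so that anisotropic estimates for $\vh$ reduce to anisotropic estimates for $\om3$ combined with those for $v^3$. The function $\om3$ in turn satisfies the scalar transport-diffusion equation $\pt\om3+v\cdot\nabla\om3-\Delta\om3=\omh\cdot\nh v^3+\om3\,\p3 v^3$, whose right-hand side involves only $\nabla v^3$, and the exponents in \eqref{blowupmain} are tailored precisely so that this derivative can be absorbed in duality against the assumed anisotropic Besov control of $v^3$.

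I would then run an anisotropic Littlewood--Paley energy estimate for $\om3$ in a Chemin--Lerner space whose indices are dual to those of the Besov norm in \eqref{blowupmain}, handling the stretching terms $\omh\cdot\nh v^3+\om3\,\p3 v^3$ by Bony's anisotropic paraproduct decomposition, and the convection term $v\cdot\nabla\om3$ through commutator estimates of type $[\dhj\dvl,u\cdot\nabla]$ combined with the LPS-type bound on $v^3$ and the basic energy bound $v\in L^\infty_TL^2\cap L^2_T\dH^1$. The heat smoothing furnishes the missing derivative that allows every piece to close at the scaling-critical level, after which one applies the Biot--Savart identity above to transfer the anisotropic bound on $\om3$ back to $\vh$.

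The main technical obstacle will be the sharp interplay of the indices: with $s_1=\mu+\frac2p+\frac2{q_1}-1>0$ and $s_2=\frac1{q_2}-\mu>0$, the derivative hitting $v^3$ in the stretching term must be split between horizontal and vertical directions by spending exactly $1-\mu-\frac2p$ and $\mu$ units of smoothness respectively; the assumptions $q_1<2$, $\mu>0$, and above all the strict upper bound $q_2<(\frac1p+\mu)^{-1}$ are precisely what make the resulting H\"older and Bernstein inequalities in each variable closable. Once the anisotropic Besov bound on $\om3$ has been obtained through a Gronwall argument, the Biot--Savart recovery of $\vh$ yields enough regularity of $\nabla v$ to keep $\|\omega\|_{L^{\frac32}\cap L^2}$ finite throughout $[0,T^\star[$, contradicting the finiteness of $T^\star$ and completing the proof.
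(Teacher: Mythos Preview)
Your outline captures the right opening moves---rotation invariance, the horizontal Biot--Savart splitting $v^{\rm h}=\nabla_{\rm h}^\perp\Delta_{\rm h}^{-1}\omega^3-\nabla_{\rm h}\Delta_{\rm h}^{-1}\partial_3v^3$, and the scalar equation for the vertical vorticity $\omega\eqdefa\omega^3$---but it has a structural gap that would prevent the argument from closing, and it also diverges from the paper's actual machinery in a way worth flagging.

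\textbf{The gap: the $\partial_3 v^3$ equation is missing.} You assert that the right-hand side $\omega^{\rm h}\cdot\nabla_{\rm h}v^3+\omega\,\partial_3v^3$ ``involves only $\nabla v^3$'' and can therefore be absorbed against the assumed Besov control of $v^3$. But $\omega^{\rm h}$ contains $\partial_3 v^{\rm h}$; after the Biot--Savart splitting, the curl-free piece contributes a term of the form $\partial_{\rm h}v^3\cdot\nabla_{\rm h}\Delta_{\rm h}^{-1}\partial_3^2 v^3$. No amount of heat smoothing on $\omega$ will produce control of $\partial_3^2 v^3$: this quantity lives on the other side of the Biot--Savart decomposition. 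In the paper this is exactly the term $F_3$ in \eqref{estimateprop1}, and it forces a \emph{coupled} system: one must simultaneously run an energy estimate on $\partial_3 v^3$ in the anisotropic Sobolev space $\cH^{\theta,r}=\dot H^{-3\alpha(r)+\theta,-\theta}$ (Proposition~\ref{prop2.2}), whose dissipation supplies $\|\partial_3^2 v^3\|_{L^2_t(\cH^{\theta,r})}$, and then close the two estimates against each other by a nonlinear Gronwall argument (Proposition~\ref{prop6.1}). Your plan, which treats $\omega^3$ in isolation, has no mechanism for this.

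\textbf{The method: nonlinear $L^r$ rather than Chemin--Lerner.} The paper does not place $\omega$ in an anisotropic Chemin--Lerner space dual to the Besov norm in \eqref{blowupmain}. It runs instead the \emph{nonlinear} $L^r$ energy estimate of \cite{CZ5,CZZ}: one multiplies the $\omega$-equation by $\omega_{r-1}=\omega|\omega|^{r-2}$ and tracks $\|\omega_{r/2}\|_{L^2}$ together with $\|\nabla\omega_{r/2}\|_{L^2}$. The key technical step is Proposition~\ref{S4prop1}, a trilinear estimate of the form
\[
\Bigl|\int_{\R^3}\partial_{\rm h}\Delta_{\rm h}^{-1}f\cdot\partial_{\rm h}a\,\omega_{r-1}\,dx\Bigr|\lesssim\min\bigl\{\|f\|_{L^r},\|f\|_{\cH^{\theta,r}}\bigr\}\,\|a\|_{\cB^{\mu,p}_{q_1,q_2,r}}\,\|\omega_{r/2}\|_{\dot H^\sigma}^{2/r'},
\]
whose proof uses the composition lemma $\|\omega_{r-1}\|_{\dot B^{2\sigma/r'}_{r',r'}}\lesssim\|\omega_{r/2}\|_{\dot H^\sigma}^{2/r'}$ followed by an anisotropic paraproduct analysis. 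The conclusion is reached not by bounding the full vorticity in $L^{3/2}\cap L^2$ but by invoking the $\dot B^{-2+2/p}_{\infty,\infty}$ criterion of \cite{CZ5} (Theorem~\ref{thm6.1}) on each $\partial_\ell v^k$, fed with the bounds on $\omega_{r/2}$ and $\partial_3 v^3$ just obtained. The index constraint $q_2<(1/p+\mu)^{-1}$ enters precisely to make the anisotropic Besov space $\cB^{\mu,p}_{q_1,q_2,r}$ embed into the spaces $\bigl(\dot B^{2/p+\mu}_{2,\infty}\bigr)_{\rm h}\bigl(\dot B^{-\mu}_{\infty,\infty}\bigr)_{\rm v}$ and $\bigl(\dot B^{0}_{s_1,\infty}\bigr)_{\rm h}\bigl(\dot B^{1/p+3\alpha(r)}_{s_2,\infty}\bigr)_{\rm v}$ used in the $\partial_3 v^3$ energy estimate.
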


\begin{rmk}
We mention that the first term in \eqref{blowupmain} corresponds to the one component version of  $L^p_{T^\star}\bigl(L^{\frac{3p}{p-2}}\bigr)$
 Ladyzhenskaya-Prodi-Serrin  criteria.
While  the  second term in \eqref{blowupmain} requires $\mu+\f2p+\f2{q_1}-1$ order derivative estimate of $(v|e)$ for horizontal variables,  which
can be arbitrarily close to zero provided  that we choose $q_1$ sufficiently close to $2$,
$\mu$ small enough and $p,~q_2$ large enough. And similar comment for the vertical derivative estimate of $(v|e)$.
Yet we can not succeed in reducing the derivative estimate
to be zero-th order.
\end{rmk}

At the end of this section, let us give some notations which will be used throughout this paper.
$C$ stands for some real positive constant which may be different in each occurrence.
Sometimes we use the notation $a\lesssim b$ for the inequality $a\leq Cb$.
For a Banach space $B$, we shall use the shorthand $L^p_TB$ for $\bigl\|\|\cdot\|_B\bigr\|_{L^p(0,T;dt)}$.
We always denote $\left(c_{k,\ell}\right)_{k,\ell\in\Z}$ to be a generic
element of $\ell^2(\Z^2)$ so that $\sum_{k,\ell\in\Z}c_{k,\ell}^2=1$,
and $\left(d_{k,\ell}\right)_{k,\ell\in\Z}$ to be a generic
element of $\ell^1(\Z^2)$ so that $\sum_{k,\ell\in\Z}c_{k,\ell}=1$.
And for any index $p\in[1,\infty]$, we shall use $p'$ to denote its conjugate index, i.e.
$1/p+1/p'=1$.

\section{Strategy of the proof of Theorem \ref{thmmain}}

In what follows, we always denote
\beq\label{nal} \al(r)\eqdefa
\frac1r-\f12,\quad  \cB_{q_1,q_2,r}^{\mu,p}\eqdefa\bigl(\dB^{\mu+\f2p+\f2{q_1}-1}_{q_1,\frac{2r}{2-r}}\bigr)_\h
\bigl(\dB^{\f1{q_2}-\mu}_{q_2,\frac{2r}{2-r}}\bigr)_\v \andf a_\al \eqdefa  \frac a {|a|} |a|^\al\ \forall \al\in ]0,1[. \eeq
Without loss of generality, we may
assume that the unit vector $e$ in Theorem \ref{thmmain} is the vertical
vector $e_3\eqdefa (0,0,1)$.
Note that for any given $p\in]4,\infty[,~\mu>0,
~q_2\in\bigl[2,\bigl(\frac1p+\mu\bigr)^{-1}\bigr[,~\kappa\in ]1,\infty[$, we can find some $r<2$
which is arbitrarily close to $2,$ such that
$p\in\bigl]4,\frac {2r} {2-r} \bigr[,~\mu>\al(r),~q_2\in\bigl[2,\bigl(1/p+3\al(r)+\mu\bigr)^{-1}\bigr[,$ and $
~\kappa<\frac{2r}{2-r}$. As a convention in the rest of the paper, we always assume that $r$ satisfies the above assumptions.
Then Theorem \refer{thmmain} is a direct consequence of the following
one:
\begin{thm}
\label{thmmain1}
 {\sl Let  $r\in\bigl[9/5,2\bigr[$,
$p\in\bigl]4,\frac {2r} {2-r} \bigr[,~q_1\in[1,2[,~\mu\in\bigl]\al(r),\frac12-\frac1p-3\al(r)\bigr[$,
$q_2\in\bigl[2,\bigl(1/p+3\al(r)+\mu\bigr)^{-1}\bigr[$, and ~$v_0$ in $\cV^r$. If the
lifespan~$T^\star$ of the unique maximal solution~$v$ of~$(NS)$ is finite,  then
there holds
\beq
\label{k.1}
\int_0^{T^\star}\|v^3\|_{SC}^p\,dt=\infty\with \|v^3\|_{SC}\eqdefa\|v^3\|_{L^{\frac{3p}{p-2}}}
+\|v^3\|_{\cB_{q_1,q_2,r}^{\mu,p}}.
\eeq
}\end{thm}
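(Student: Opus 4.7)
The plan is to argue by contradiction. Assume
$$\int_0^{T^\star}\|v^3(t)\|_{SC}^p\,dt<\infty,$$
and show that under this assumption the vorticity $\omega=\grad\times v$ stays bounded in $L^\infty_{T^\star}(L^{3/2}\cap L^r)$; by the standard local well-posedness in~$\cV^r$ this would extend~$v$ past~$T^\star$ and contradict maximality. The~$L^{3/2}$ estimate on~$\omega$ is already provided by Theorem~\ref{thmCZ5} (or directly from~$v\in L^\infty_t\dH^{\f12}$), so the entire task is to propagate $\|\omega(t)\|_{L^r}$ using only the assumption on $v^3$.

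Starting from~$\pt\omega+v\cdot\grad\omega-\D\omega=\omega\cdot\grad v$, testing against~$\omega_{r-1}=\omega|\omega|^{r-2}$ and integrating by parts yields
$$\tfrac{1}{r}\tfrac{d}{dt}\|\omega\|_{L^r}^r+c_r\|\grad\omega_{r/2}\|_{L^2}^2=\int_{\R^3}(\omega\cdot\grad v)\cdot\omega_{r-1}\,dx\eqdefa\cI(t),$$
with $c_r>0$ and $\omega_\alpha$ in the sense of~\eqref{nal}. The structural idea, inherited from~\cite{CZ5,CZZ}, is to split~$\cI(t)$ into contributions in which $v^3$ (or $\p3$ hitting~$v^3$) appears explicitly and contributions depending only on~$v^\h$ and~$\omega$. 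Using $\divh v^\h=-\p3 v^3$ together with the Biot-Savart-type identity expressing $\nh v^\h$ through $\om3$ and Riesz transforms of $v^3$, every occurrence of $\nh v^\h$ can be rewritten, up to bounded horizontal Fourier multipliers, in terms of~$\omega$ and~$v^3$. After this reduction, $\cI(t)$ becomes a finite sum of trilinear integrals in each of which $v^3$ appears as one factor and two factors involving only~$\omega$.

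Each such trilinear integral is then estimated anisotropically: the~$v^3$ factor is placed either in $L^{\f{3p}{p-2}}$, for the purely Lebesgue piece treated by a one-component Ladyzhenskaya-Prodi-Serrin argument, or in the anisotropic Besov norm $\cB_{q_1,q_2,r}^{\mu,p}$; the two vorticity factors are put in suitable anisotropic $L^r$-based spaces controlled by $\|\omega\|_{L^r}$ and the dissipation $\|\grad\omega_{r/2}\|_{L^2}$ via Bony's decomposition~\eqref{defparaproduct} in horizontal and vertical variables simultaneously. The parameter constraints $p\in\bigl]4,\f{2r}{2-r}\bigr[$, $\mu\in\bigl]\al(r),\f12-\f1p-3\al(r)\bigr[$, $q_2\in\bigl[2,(1/p+3\al(r)+\mu)^{-1}\bigr[$ and $r\geq 9/5$ are tuned precisely so that horizontal and vertical Bernstein inequalities, H\"older in space, and H\"older in the third frequency index $\kappa=\f{2r}{2-r}$ close up and produce an estimate of the form
$$\cI(t)\leq C\bigl(1+\|v^3(t)\|_{SC}\bigr)^{p'}\|\omega(t)\|_{L^r}^r+\tfrac{c_r}{2}\|\grad\omega_{r/2}(t)\|_{L^2}^2.$$
Absorbing the second term into the left-hand side and applying Gr\"onwall's lemma with the weight $\|v^3\|_{SC}^p\in L^1_{T^\star}$ would then deliver the desired bound on $\|\omega\|_{L^\infty_{T^\star}L^r}$.

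The main obstacle, and the technical heart of the proof, will be establishing the displayed anisotropic trilinear estimate. One must carry out Bony's paraproduct decomposition in the horizontal and vertical variables simultaneously; track the $\ell^\kappa(\Z^2)$-summation in the frequency indices via H\"older so that exactly the exponent~$p$ in time survives; and absorb the Sobolev loss~$2^{j\al(r)}$ that arises when embedding $L^r$-based anisotropic spaces into the $L^2$-based spaces naturally associated with the dissipation into the smoothness indices $\mu+\f2p+\f2{q_1}-1$ and $\f1{q_2}-\mu$ appearing in $\cB_{q_1,q_2,r}^{\mu,p}$. It is this balance that forces all the parameter restrictions in the statement and that prevents, at least by this strategy, any further reduction of the regularity indices to zero.
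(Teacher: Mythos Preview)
Your overall contradiction strategy is fine, but the proposed single closed $L^r$ energy estimate on the vorticity cannot be established in the form you write, and this is exactly where the paper's proof diverges from your sketch.

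First, the paper does not work with the full vorticity $\Omega=\curl v$. It works only with the scalar third component $\omega=\pa_1v^2-\pa_2v^1$, whose evolution equation has the special feature that every forcing term carries an explicit factor of $v^3$ or $\pa_3 v^3$. If you test the full vorticity equation against $\Omega_{r-1}$, the stretching term $\Omega\cdot\na v$ produces, after the horizontal Biot--Savart reduction you describe, contributions of schematic type $\omega\cdot R_\h\pa_3\omega\cdot(\Omega^\h)_{r-1}$ with no $v^3$ factor at all; such cubic-in-$\omega$ terms cannot be bounded by $\|v^3\|_{SC}$, so your displayed inequality for $\cI(t)$ fails for the full vorticity.

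Second, even for the scalar $\omega$, the estimate does not close by itself. Splitting $\pa_3 v^\h=\pa_3 v^\h_{\rm curl}+\pa_3 v^\h_{\rm div}$ produces a term (the paper's $F_3$) governed by $\pa_3^2 v^3$, which is one derivative above what $\|v^3\|_{SC}$ controls. The paper handles this by introducing a \emph{second} energy estimate: the $\cH^{\theta,r}$ norm of $\pa_3 v^3$ is propagated via its own evolution equation (Proposition~\ref{prop2.2}), and this estimate in turn has a right-hand side involving $\|\omega_{r/2}\|_{L^2}$ and $\|\na\omega_{r/2}\|_{L^2}$. The two inequalities are then closed as a \emph{coupled} system (Proposition~\ref{prop6.1}) via a nonlinear Gr\"onwall argument with exponent $1+2p\al(r)$. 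Your single-quantity Gr\"onwall scheme has no mechanism to control $\|\pa_3^2 v^3\|_{\cH^{\theta,r}}$, so the $F_3$ contribution remains open.

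Finally, once the coupled bounds on $\omega_{r/2}$ and $\pa_3 v^3$ are obtained, the paper does not invoke local well-posedness in $\cV^r$ directly; it instead verifies the hypotheses of the blow-up criterion of~\cite{CZ5} (Theorem~\ref{thm6.1}) by estimating $\|\pa_\ell v^k\|_{\cB_{p_{k,\ell}}}$ for suitable $p_{k,\ell}$. This last step is minor compared to the gap above, but it is how the argument is actually closed.
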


In the rest of this section, we shall present the strategy of the proof to Theorem \ref{thmmain1}.

Before preceding, we denote $\Omega\eqdefa \curl v$ to be the vorticity of the velocity field,
$\omega=\pa_1 v^2-\pa_2 v^1$ to be the third component of $\Omega$,
and $\nabla_{\rm
h}^\perp=(-\pa_2,\pa_1),~\D_{\rm h}=\pa_1^2+\pa_2^2$.
Due to $\dive v=0$, we deduce the following version of  Biot-Savart's law in the horizontal variables
\beq
\label{a.1wrt} v^{\rm h}=\vcurl+\vdiv,\quad\mbox{with}\quad
\vcurl
\eqdefa\nablah^\perp \D_{\rm h}^{-1} \omega \andf \vdiv\eqdefa
-\nablah  \D_{\rm h}^{-1}
\partial_3v^3.
\eeq
Then we can reformulate the Navier-Stokes equations $(NS)$ in terms of $\omega$ and $\pa_3 v^3$:
$$
(\wt {NS})
\quad\left\{
\begin{array}{c}
\partial_t\omega+v\cdot\nabla\omega -\D\omega= \partial_3v^3\omega +\partial_2v^3\partial_3v^1-\partial_1v^3\partial_3v^2,\\
\partial_t \partial_3v^3 +v\cdot\nabla\partial_3v^3-\D\partial_3v^3+\partial_3v\cdot\nabla v^3
=-\partial_3^2\D^{-1} \Bigl(\ds\sum_{\ell,m=1}^3 \partial_\ell
v^m\partial_mv^\ell\Bigr).
\end{array}
\right.
$$

The first step of the proof of Theorem\refer{thmmain1} is
the following
 proposition:
  \begin{prop}
 \label{prop2.1}
 {\sl Under the assumptions of Theorem \ref{thmmain1} and for any $\th\in]0,\al(r)[$,
 a constant $C$ exists such that, for any $t<T^\star$, we have
\begin{equation}\begin{split}\label{prop1}
\frac 1r \bigl\|\omega_{\frac{r}2}& (t) \bigr\|_{L^2}^2 +
\frac{r-1}{r^2}\int_0^t \bigl\|\nabla
\omega_{\frac{r}2}(t')\bigr\|_{L^2}^2dt'\\
& \leq
\Bigl(\frac1r\|\omega_0\|_{L^r}^r
+C\bigl(\int_0^t\|\pa_3^2 v^3(t')\|_{\htr}^2dt'\bigr)^{\frac{r}2}
\Bigr)\times \exp\Big(C\int_0^t\|v^3(t')\|_{SC}^{p}dt'\Bigr).
\end{split}\end{equation}
}\end{prop}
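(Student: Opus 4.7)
The plan is to perform an $L^r$ energy estimate for the scalar vorticity $\omega$ appearing in the first equation of $(\widetilde{NS})$, estimate the resulting source terms using anisotropic Littlewood--Paley analysis, and close with a nonlinear Gronwall argument. Testing the $\omega$-equation against $|\omega|^{r-2}\omega$ and integrating over $\R^3$, the transport term vanishes thanks to $\dive v=0$. Using $\|\omega\|_{L^r}^r=\|\omega_{r/2}\|_{L^2}^2$ and the scalar identity $\nabla\omega_{r/2}=\tfrac{r}{2}|\omega|^{r/2-1}\nabla\omega$, the time derivative produces $\tfrac1r\tfrac{d}{dt}\|\omega_{r/2}\|_{L^2}^2$ and the diffusion yields $\tfrac{4(r-1)}{r^2}\|\nabla\omega_{r/2}\|_{L^2}^2$. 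Hence
\begin{equation*}
\frac{1}{r}\frac{d}{dt}\|\omega_{r/2}\|_{L^2}^2+\frac{4(r-1)}{r^2}\|\nabla\omega_{r/2}\|_{L^2}^2=I_1+I_2+I_3,
\end{equation*}
with $I_1=\int\pa_3 v^3\,|\omega|^r\,dx$, $I_2=\int\pa_2 v^3\,\pa_3 v^1\,|\omega|^{r-2}\omega\,dx$, and $I_3=-\int\pa_1 v^3\,\pa_3 v^2\,|\omega|^{r-2}\omega\,dx$. The objective is to absorb most of $\|\nabla\omega_{r/2}\|_{L^2}^2$ and obtain an inequality of the form $\tfrac{dX}{dt}+\tfrac{r-1}{r^2}\|\nabla\omega_{r/2}\|_{L^2}^2\le AX+BX^{1-2/r}$ for $X=\tfrac{1}{r}\|\omega_{r/2}\|_{L^2}^2$, $A=C\|v^3\|_{SC}^p$ and $B=C\|\pa_3^2 v^3\|_{\htr}^2$.

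The bulk of the work is the trilinear estimate on $I_1$. I would rewrite $|\omega|^r=|\omega_{r/2}|^2$ and apply an anisotropic Bony decomposition to $\pa_3 v^3\cdot\omega_{r/2}\cdot\omega_{r/2}$, distributing the three factors via mixed horizontal/vertical Hölder and Bernstein inequalities. The $L^{\frac{3p}{p-2}}$ ingredient of $\|v^3\|_{SC}$ acts together with a Gagliardo--Nirenberg interpolation on $\omega_{r/2}$ between $L^2$ and $\nabla\omega_{r/2}\in L^2$ to cover the Lebesgue-friendly regime, while the anisotropic Besov piece $\cB^{\mu,p}_{q_1,q_2,r}$ covers the paraproduct blocks where horizontal and vertical frequency scales must be balanced. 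The algebraic constraints $\mu\in(\al(r),\tfrac12-\tfrac1p-3\al(r))$ and $q_2\in[2,(1/p+3\al(r)+\mu)^{-1})$ are calibrated precisely so that this paraproduct closes and the $p$-th power of $\|v^3\|_{SC}$ emerges from Young's inequality, leaving a small multiple of $\|\nabla\omega_{r/2}\|_{L^2}^2$ to be absorbed on the left.

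For $I_2$ and $I_3$, I use the Biot--Savart identity \eqref{a.1wrt} to split $\pa_3 v^j=\pa_3\nablah^\perp\D_{\rm h}^{-1}\omega-\pa_3\nablah\D_{\rm h}^{-1}\pa_3 v^3$. The curl piece is $\pa_3$ of a horizontal Riesz transform of $\omega$; after shifting one horizontal derivative through the singular integral it pairs against $\nabla\omega_{r/2}$ and is absorbed exactly as in the treatment of $I_1$. The divergence piece carries a $\pa_3^2 v^3$, which after a mixed Hölder estimate and an anisotropic Besov embedding produces the announced term $C\|\pa_3^2 v^3\|_{\htr}^2\|\omega_{r/2}\|_{L^2}^{2-4/r}$; the slack $\theta\in(0,\al(r))$ accommodates the half-derivative loss on the vorticity factor. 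These bounds combine to the differential inequality stated above.

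Finally, since $2/r<1$, multiplying the inequality by $X^{2/r-1}$ linearises it as $\tfrac{d}{dt}X^{2/r}\le\tfrac2r\bigl(AX^{2/r}+B\bigr)$. The usual linear Gronwall lemma yields
\begin{equation*}
X(t)^{2/r}\le\Bigl(X(0)^{2/r}+\tfrac2r\textstyle\int_0^t B(t')\,dt'\Bigr)\exp\Bigl(\tfrac2r\textstyle\int_0^t A(t')\,dt'\Bigr),
\end{equation*}
and raising to the $r/2$-power together with the subadditivity $(a+b)^{r/2}\le a^{r/2}+b^{r/2}$, valid because $r/2<1$, gives the pointwise part of \eqref{prop1}; re-integrating the dissipation term in time then supplies the $\int_0^t\|\nabla\omega_{r/2}\|_{L^2}^2\,dt'$ contribution on the left. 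The main obstacle is the anisotropic trilinear estimate for $I_1$: the exponent $p$ must emerge exactly from the Young chain so that the source is integrable to the $p$-th power in time, and the two components of the $SC$ norm must cooperate to cover the full frequency range of $\pa_3 v^3$ without wasting regularity on $\omega$. All the numerical constraints on $(r,p,q_1,q_2,\mu,\theta)$ are designed precisely so that this single paraproduct computation closes.
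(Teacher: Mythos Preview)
Your skeleton is right (energy estimate on $\omega$, Biot--Savart split $v^{\rm h}=v^{\rm h}_{\rm curl}+v^{\rm h}_{\rm div}$, Gronwall), but you have inverted where the difficulty lies. The term $I_1=\int\pa_3v^3|\omega|^r$ is in fact the \emph{easiest}: one integrates by parts in $x_3$, applies H\"older with $v^3\in L^{3p/(p-2)}$, $\pa_3\omega\in L^r$ and $\omega_{r/2}\in L^{6p(r-1)/(2pr-3p+2r)}$, then interpolates and uses Young. No Bony decomposition is needed, and only the Lebesgue part of $\|v^3\|_{SC}$ is used here. The anisotropic Besov norm $\cB^{\mu,p}_{q_1,q_2,r}$ and all the numerical constraints on $(q_1,q_2,\mu,\theta)$ are \emph{not} designed for $I_1$.

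The real work is in $I_2$ and $I_3$ (the paper's $F_2,F_3$), and your treatment of them is the gap. After the Biot--Savart split, both reduce to integrals of the shape $\int \pa_{\rm h}\Delta_{\rm h}^{-1}f\cdot\pa_{\rm h}v^3\cdot\omega_{r-1}\,dx$ with $f=\pa_3\omega$ (curl piece) or $f=\pa_3^2v^3$ (div piece). The paper establishes a dedicated trilinear estimate (its Proposition~\ref{S4prop1}):
\[
\Bigl|\int \pa_{\rm h}\Delta_{\rm h}^{-1}f\cdot\pa_{\rm h}a\cdot\omega_{r-1}\,dx\Bigr|\lesssim \min\{\|f\|_{L^r},\|f\|_{\htr}\}\,\|a\|_{\cB^{\mu,p}_{q_1,q_2,r}}\,\|\omega_{r/2}\|_{\dot H^\sigma}^{2/r'},\qquad \sigma=r'\bigl(\tfrac12-\tfrac1p\bigr),
\]
proved by a horizontal Bony decomposition of $\pa_{\rm h}a\cdot\omega_{r-1}$ together with the nonlinear bound $\|\omega_{r-1}\|_{\dot B^{2\sigma/r'}_{r',r'}}\lesssim\|\omega_{r/2}\|_{\dot H^\sigma}^{2/r'}$. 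This is where the anisotropic Besov norm on $v^3$ genuinely enters and where the exponent constraints are consumed. Your phrase ``pairs against $\nabla\omega_{r/2}$ and is absorbed exactly as in the treatment of $I_1$'' does not capture this mechanism; in particular the curl piece cannot be reduced to $I_1$ by merely shifting a derivative, and the div piece carries an unavoidable $\|v^3\|_{\cB^{\mu,p}_{q_1,q_2,r}}$ factor before any Young step. Your nonlinear Gronwall (multiply by $X^{2/r-1}$ and linearise) is a legitimate alternative to the paper's route (time--H\"older on $F_3$, then linear Gronwall with the elementary inequality $x^{1-r/2}e^{Cx}\lesssim e^{C'x}$), but you should note that recovering the dissipation $\int_0^t\|\nabla\omega_{r/2}\|_{L^2}^2$ requires returning to the integrated inequality once the pointwise bound on $X(t)$ is in hand.
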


The proof of Proposition \ref{prop2.1} will be the purpose of Section \ref{Sect3}.

We remark that when $p_1=p_2=r_1=r_2=2,$ the anisotropic Besov space
$\dB^{s_1,s_2}_{2,2},$ given by Definition \ref{anibesov} coincides with the
classical homogeneous anisotropic Sobolev space $\dH^{s_1,s_2}$
defined as follows:
\begin{defi}
\label{defhtr} {\sl For~$(s_1,s_2)$ in~$\R^2$, $\dH^{s_1,s_2} $ denotes
the space of tempered distribution~$a$  such~that
 $$
\|a\|^2_{\dH^{s_1,s_2}} \eqdefa \int_{\R^3} |\xi_{\rm
h}|^{2s_1}|\xi_3|^{2s_2} |\wh a (\xi)|^2d\xi <\infty\with \xi_{\rm
h}=(\xi_1,\xi_2).
 $$
 For $\al(r)$ given by \eqref{nal} and ~$\theta$ in~$]0,\al(r)[$, we denote~$\cH^{\theta,r}\eqdefa \dH^{-3\al(r)+\theta,-\theta}$.
 }
 \end{defi}

\medbreak
Next we are going to deal with the estimate of~$\|\pa^2_3v^3\|_{L^2_t(\cH^{\theta,r})}$.
Due to $\Omega_0\eqdefa \Omega|_{t=0}\in L^r$ and $\dive v_0=0,$ we have
\begin{equation}\begin{split}
\|\pa_3 v^3_0\|_{\cH^{\theta,r}}^2=&\int_{|\xi_3|\leq|\xi_{\rm h}|}|\xi_{\rm h}|^{-6\alpha(r)+2\theta}|\xi_3|^{-2\theta}
\bigl|\cF(\pa_3\v^3_0)(\xi)\bigr|^2 d\xi\\
&+\int_{|\xi_{\rm h}|\leq|\xi_3|}|\xi_{\rm h}|^{-6\alpha(r)+2\theta}|\xi_3|^{-2\theta}
\bigl|\cF\bigl(-\divh v_0^{\rm h}\bigr)(\xi)\bigr|^2 d\xi\\
\leq& \|v_0^3\|^2_{H^{1-3\alpha(r)}}\lesssim\|\Omega_0\|^2_{L^r}.
\end{split}\end{equation}
By  performing  $\cH^{\theta,r}$-norm energy estimate
to the $\pa_3 v^3$ equation of $(\wt {NS})$, we shall prove the following proposition in  Section \ref{Sect4}:
\begin{prop}
 \label{prop2.2}
 {\sl Under the assumption of Proposition \ref{prop2.1}, for any~$t<T^\star,$
we have
\begin{equation}\begin{split}\label{b.4bqp}
& \|\pa_3v^3(t)\|_{\cH^{\theta,r}}^2
 +\int_0^t\|\na\pa_3v^3(t')\|_{\cH^{\theta,r}}^2\,dt' \leq C\exp \Bigl(C\int_0^t
\|v^3(t')\|^{p}_{SC}\,dt' \Bigr)\\
 &\qquad\qquad{}\times\biggl(\|\Omega_0\|_{L^{r}}^2
 + \int_0^t \Bigl(\|v^3(t')\|_{SC}\bigl\|\omega_{\f{r}2}(t')\bigr\|_{L^2}^{2\left(2\al(r)+\frac 1 p \right)}
\bigl\|\na\omega_{\f{r}2}(t')\bigr\|_{L^2}^{2\bigl(1-\frac 1p\bigr)}\\
&\qquad\qquad\qquad\qquad\qquad{}+\|v^3(t')\|_{SC}^2\bigl\|\omega_{\f{r}2}(t')\bigr\|_{L^2}^{4\left(\al(r)+\frac 1 p \right)}
\bigl\|\na\omega_{\f{r}2}(t')\bigr\|_{L^2}^{2\left(1-\frac 2 p \right)}\Bigr)\,dt'\biggr).
\end{split}\end{equation}
}\end{prop}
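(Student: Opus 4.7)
The plan is to carry out an anisotropic energy estimate in $\cH^{\theta,r}$ on the equation for $w\eqdefa\pa_3 v^3$, i.e.\ the second line of $(\wt{NS})$. Pairing this equation with $w$ in the $\cH^{\theta,r}$-scalar product, the linear part will produce the desired
$$
\frac12\frac{d}{dt}\|w(t)\|_{\cH^{\theta,r}}^2+\|\na w(t)\|_{\cH^{\theta,r}}^2,
$$
while the right-hand side reduces to three nonlinear pairings: the transport term $\langle v\cdot\na w,w\rangle_{\cH^{\theta,r}}$, the vertical-stretching term $\langle \pa_3 v\cdot\na v^3,w\rangle_{\cH^{\theta,r}}$, and the pressure-type term $\langle \pa_3^2\D^{-1}\sum_{\ell,m}\pa_\ell v^m\pa_m v^\ell,w\rangle_{\cH^{\theta,r}}$.

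For each of these contributions I would apply the anisotropic dyadic blocks $\dhk\dvl$ and use Bony's paraproduct in both horizontal and vertical frequency variables, estimating each piece by duality against $\dH^{-3\al(r)+\theta,-\theta}$. Whenever a factor of $v^3$ appears it is to be absorbed into $\|v^3\|_{SC}$: the $L^{\frac{3p}{p-2}}$ summand of $SC$ handles direct H\"older bounds in physical space, while the anisotropic Besov summand $\cB_{q_1,q_2,r}^{\mu,p}$ is tailored so that its indices dualise with the target under the standing assumptions $\mu>\al(r)$ and $q_2<(1/p+3\al(r)+\mu)^{-1}$. The horizontal velocity factors will be rewritten through the Biot--Savart splitting \eqref{a.1wrt}. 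The divergence piece $\vdiv=-\nh\D_\h^{-1}\pa_3 v^3$ is a zero-order horizontal multiplier acting on $w$ itself, so contributions containing it generate bounds of the form $\|v^3\|_{SC}\|w\|_{\cH^{\theta,r}}^2$ which will feed the exponential Gronwall factor after absorbing a fraction of $\|\na w\|_{\cH^{\theta,r}}^2$ on the left. The curl piece $\vcurl=\nh^\perp\D_\h^{-1}\omega$ introduces vorticity; anisotropic Bernstein combined with Gagliardo--Nirenberg applied to $\omr$ (recalling $\|\omr\|_{L^2}^2=\|\omega\|_{L^r}^r$) converts the resulting norms into interpolation products between $\|\omr\|_{L^2}$ and $\|\na\omr\|_{L^2}$. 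Balancing the scaling on the Littlewood--Paley blocks against the requirement of extracting a full $L^p_t$-factor of $\|v^3\|_{SC}$ should output exactly the exponents $2(2\al(r)+1/p)$ and $2(1-1/p)$ for the terms linear in $\|v^3\|_{SC}$, together with the squared variants $4(\al(r)+1/p)$, $2(1-2/p)$ whenever two $\vcurl$ factors occur simultaneously.

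The hard part will be the pressure-type term. The operator $\pa_3^2\D^{-1}$ is of order zero but anisotropic and non-local, and there are nine bilinear summands $\pa_\ell v^m\pa_m v^\ell$ to dispose of, several involving only horizontal components. For those I would again use the divergence-free condition together with Biot--Savart to rewrite every occurrence of $v^\h$ as a horizontal operator on $\omega$ or on $\pa_3 v^3$, so that the paraproduct/remainder machinery of the previous paragraph applies verbatim; the non-locality of $\pa_3^2\D^{-1}$ is absorbed once one works at the level of dyadic blocks, on which this multiplier is uniformly bounded. Assembling all the pieces produces a differential inequality of the form
$$
\frac{d}{dt}\|w\|_{\cH^{\theta,r}}^2+\|\na w\|_{\cH^{\theta,r}}^2\leq C\|v^3\|_{SC}^p\,\|w\|_{\cH^{\theta,r}}^2+R(t),
$$
with $R(t)$ precisely the integrand already appearing inside the time integral in \eqref{b.4bqp}. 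Combining this with the initial bound $\|w(0)\|_{\cH^{\theta,r}}\lesssim\|\Omega_0\|_{L^r}$ established in the excerpt just above the proposition and applying Gronwall's inequality then closes the estimate.
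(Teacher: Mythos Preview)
Your outline matches the paper's proof of Proposition~\ref{prop2.2} almost exactly: energy estimate in $\cH^{\theta,r}$ for $w=\pa_3v^3$, anisotropic Bony decomposition on each nonlinear term, Biot--Savart splitting of $v^\h$ into curl and divergence parts, interpolation between $\|\omr\|_{L^2}$ and $\|\na\omr\|_{L^2}$ via Lemmas~\ref{BiotSavartomega} and~\ref{BiotSavartBesovanisotropic}, and Gronwall to close. The paper's particular organisation into $Q_1,Q_2,Q_3$ and its auxiliary Lemma~\ref{lem5.1} are book-keeping devices that implement precisely what you describe.

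There is, however, one technical step you gloss over that the paper carries out carefully and that a straightforward paraproduct bound does not deliver. In the transport term $\langle v^\h\cdot\nablah w,w\rangle_{\cH^{\theta,r}}$, the ``diagonal'' paraproduct piece $\Th\Tv(v^\h,\nablah w)$ tested against $w$ puts the factor $\nablah w$ at the same dyadic frequency as the test function $w$. Naively bounding this by $\|\D_k^\h\D_\ell^\v\nablah w\|_{L^2}\sim 2^k\|\D_k^\h\D_\ell^\v w\|_{L^2}$ leaves an extra $2^k$ that cannot be absorbed into any available norm of $v^\h$. The paper recovers this loss by writing the pairing as a commutator $[\dhk\dvl,S^\h_{k'-1}S^\v_{\ell'-1}v^\h]$ acting on $\nablah w$, together with a residual diagonal term in which the divergence-free condition yields $\divh v^\h=-\pa_3v^3$; the commutator gains the missing $2^{-k}$ (or $2^{-\ell}$) at the cost of one derivative on $v^\h$, which is then estimated via Biot--Savart exactly as you propose. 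Without this commutator step the estimate does not close, so make sure to include it when you fill in the details.
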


With Propositions \ref{prop2.1} and \ref{prop2.2}, we can  repeat  the arguments in Section 6 of \cite{CZZ}
to complete the proof of Theorem \ref{thmmain1}. For completeness, we shall sketch the proof below.

\begin{prop}\label{prop6.1}
{\sl Under the assumption of Proposition \ref{prop2.1}, for any~$t<T^\star,$
we have
\ben
\label{6.1}
\bigl\|\omr(t)\bigr\|_{L^2}^{2(1+2p\alpha(r))}+\bigl\|\nabla\omr\bigr\|_{L_t^2(L^2)}^{2(1+2p\alpha(r))}
&\leq& C\|\Omega_0\|_{L^r}^{r(1+2p\alpha(r))}\cdot \varepsilon(t),\\
\label{6.2}
\bigl\|\p3 v^3(t)\bigr\|_{\htr}^{2}+\bigl\|\nabla\p3 v^3\bigr\|_{L_t^2(\htr)}^{2}
&\leq& C\|\Omega_0\|_{L^r}^{2}\cdot \varepsilon(t) \with\\
\varepsilon(t)&\eqdefa&\exp\Bigl(C\exp\bigl(C\int_0^t\bigl\|v^3(t')\bigr\|_{SC}^p dt'\bigr)\Bigr). \nonumber
\een
}\end{prop}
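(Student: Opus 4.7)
The strategy is to couple the estimates of Propositions \ref{prop2.1} and \ref{prop2.2} to decouple their mutual dependence on $\omega_{\frac r2}$ and $\partial_3 v^3$. Set
$$
N(t) := \tfrac{1}{r}\|\omega_{\frac r2}(t)\|_{L^2}^2 + \tfrac{r-1}{r^2}\int_0^t\|\nabla\omega_{\frac r2}\|_{L^2}^2\,dt',\quad P(t):=\|\pa_3 v^3(t)\|_{\htr}^2+\int_0^t\|\nabla\pa_3 v^3\|_{\htr}^2\,dt',
$$
$M(t):=\int_0^t\|v^3(t')\|_{SC}^p\,dt'$ and $E(t):=\exp(CM(t))$. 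Then \eqref{6.1}--\eqref{6.2} translate into the assertions $N(t)^{1+2p\al(r)}\lesssim \|\Omega_0\|_{L^r}^{r(1+2p\al(r))}\varepsilon(t)$ and $P(t)\lesssim \|\Omega_0\|_{L^r}^2\varepsilon(t)$.

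First, I would apply H\"older's inequality in time to the two cross integrals on the RHS of \eqref{b.4bqp}. Using the conjugate pair $(p,p/(p-1))$ for the first integrand (which carries $\|\nabla\omega_{\frac r2}\|_{L^2}^{2(1-1/p)}$) and the pair $(p/2,p/(p-2))$ for the second (which carries $\|\nabla\omega_{\frac r2}\|_{L^2}^{2(1-2/p)}$), the factor $\|\nabla\omega_{\frac r2}\|_{L^2}^2$ is pulled into the integral defining $N(t)$, the factor $\|v^3\|_{SC}^p$ into $M(t)$, while $\|\omega_{\frac r2}\|_{L^2}^2$ contributes only through its supremum on $[0,t]$, controlled by $N(t)$. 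This yields
\begin{equation*}
P(t)\le CE(t)\Bigl(\|\Omega_0\|_{L^r}^2+N(t)^{1+2\al(r)}\bigl(M(t)^{1/p}+M(t)^{2/p}\bigr)\Bigr).\qquad(\star)
\end{equation*}

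Next, I would raise Proposition \ref{prop2.1} (with $\int_0^t\|\pa_3^2v^3\|_\htr^2\,dt'\le P(t)$) to the power $1+2p\al(r)$ and substitute $(\star)$ raised to $r(1+2p\al(r))/2$. Here I would exploit the key algebraic identity
$$
(1+2\al(r))\cdot\tfrac{r}{2}=1 \quad\text{(equivalently }r\al(r)=1-\tfrac r2\text{)},
$$
which makes the exponent of $N$ on the RHS equal to $1+2p\al(r)$, matching the LHS, and simultaneously promotes the $\|\Omega_0\|_{L^r}^2$ prefactor in $(\star)$ to $\|\Omega_0\|_{L^r}^{r(1+2p\al(r))}$. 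The outcome is an inequality of the schematic form
$$
N(t)^{1+2p\al(r)}\le C\|\Omega_0\|_{L^r}^{r(1+2p\al(r))}E(t)^{a}+CE(t)^{a}\bigl(M(t)^{\frac{r}{2p}}+M(t)^{\frac{r}{p}}\bigr)^{1+2p\al(r)}N(t)^{1+2p\al(r)}
$$
for some explicit exponent $a$.

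Finally, I would resolve this implicit bound via a continuity/bootstrap absorption argument on $[0,T^\star)$. Since $N$ is continuous and $M(0)=0$, the coefficient in front of the second $N^{1+2p\al(r)}$ term vanishes at $t=0$; absorbing it whenever this coefficient is $\le 1/2$ and iterating across a partition of $[0,T^\star)$ in which the control of the coefficient is recovered by paying a factor of $E(t)^{\text{const}}$ at each step generates the \emph{outer} exponential in $\varepsilon(t)=\exp(C\exp(CM(t)))$ (the inner $E(t)$ being already present). This proves \eqref{6.1}. Plugging \eqref{6.1} back into $(\star)$ and using the companion identity $r(1+2\al(r))=2$ turns $N^{1+2\al(r)}$ into an $\|\Omega_0\|_{L^r}^2$ factor times $\varepsilon(t)^{(1+2\al(r))/(1+2p\al(r))}\le\varepsilon(t)$, producing \eqref{6.2}.

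The main obstacle is the implicit/absorbed third step: without the identity $(1+2\al(r))r/2=1$ the exponents of $N$ on the two sides would not match and no Gronwall argument could close the loop. The double exponential $\varepsilon(t)$ arises precisely from Gronwall applied to a quantity that is already exponential in $M$.
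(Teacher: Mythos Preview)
Your overall plan of coupling Propositions \ref{prop2.1} and \ref{prop2.2} and exploiting the identity $(1+2\al(r))\,r/2=1$ is correct, and your estimate $(\star)$ is fine. However, the way you close the loop differs from the paper and, as written, has a real gap.

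The paper does \emph{not} pull $\sup_{[0,t]}\|\omega_{\frac r2}\|_{L^2}$ out of the time integral. Instead, when applying H\"older to the cross integrals in \eqref{b.4bqp}, it keeps the factor $\|\omega_{\frac r2}(t')\|_{L^2}^{2(1+2p\al(r))}$ \emph{inside} the integral together with $\|v^3(t')\|_{SC}^p$, and then uses Young's inequality to absorb $\int_0^t\|\nabla\omega_{\frac r2}\|_{L^2}^2$ into the left-hand side of \eqref{prop1}. The upshot is an inequality of the form
\[
N(t)\le e(t)\|\Omega_0\|_{L^r}^r+e(t)\Bigl(\int_0^t\|v^3(t')\|_{SC}^p\,\|\omega_{\frac r2}(t')\|_{L^2}^{2(1+2p\al(r))}\,dt'\Bigr)^{\frac1{1+2p\al(r)}},
\]
which, after raising to the power $1+2p\al(r)$, is \emph{Gronwall-ready}: the right-hand side contains $\int_0^t g(t')\|\omega_{\frac r2}(t')\|_{L^2}^{2(1+2p\al(r))}dt'$, and a single application of Gronwall yields the double exponential $\varepsilon(t)$ directly.

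By contrast, your step~3 produces $N(t)^{1+2p\al(r)}\le A(t)+B(t)\,N(t)^{1+2p\al(r)}$ with a coefficient $B(t)=CE(t)^a\bigl(M(t)^{1/p}+M(t)^{2/p}\bigr)^\beta$ that depends on the \emph{global} quantities $E(t)$ and $M(t)$, not on local increments. The absorption ``whenever the coefficient is $\le 1/2$'' therefore does not apply for large $M(t)$, and the iteration you sketch cannot be carried out on the inequality as stated: it is a fixed-endpoint estimate from $0$ to $t$, not a differential or local-in-time relation. To make your bootstrap rigorous you would need to rederive Propositions \ref{prop2.1} and \ref{prop2.2} from an arbitrary initial time $s$ (so that $E$ and $M$ are replaced by their increments on $[s,t]$), then partition $[0,T^\star)$ into pieces on which $M(t_{k+1})-M(t_k)$ is small enough, and track both $N$ and $P$ through each step. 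This can be done but is considerably more work than your sketch indicates. The paper's device of keeping $\|\omega_{\frac r2}(t')\|_{L^2}$ inside the time integral avoids all of this.
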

\begin{proof} For $p\in [1,\infty]$ and any $t\in[0,T^\star[$, let us denote
\begin{equation}
p'=\f{p}{p-1} \andf e(t)\eqdefa C\exp\bigl(C\int_0^t\bigl\|v^3(t')\bigr\|_{SC}^p dt'\bigr),
\end{equation}
where the constant $C$ may change from line to line. Then it follows from Proposition \ref{prop2.2}  that
\begin{equation}\label{6.4}
\Bigl(\int_0^t\|\na\pa_3v^3(t')\|_{\cH^{\theta,r}}^2\,dt'\Bigr)^{\frac r2} e(t) \lesssim e(t)
\bigl(\|\Omega_0\|_{L^{r}}^r+\Rmnum{2}_1(t)+\Rmnum{2}_2(t)\bigr)
\end{equation}
where
\begin{align*}
&\Rmnum{2}_1(t)\eqdefa\Bigl( \int_0^t \|v^3(t')\|_{SC}\bigl\|\omega_{\f{r}2}(t')\bigr\|_{L^2}^{2\left(2\al(r)+\frac 1 p \right)}
\bigl\|\na\omega_{\f{r}2}(t')\bigr\|_{L^2}^{\frac 2 {p'}}\,dt'\Bigr)^{\frac r2}\quad\mbox{and}\\
&\Rmnum{2}_2(t)\eqdefa \Bigl( \int_0^t \bigl\|v^3(t')\bigr\|_{SC}^2\bigl\|\omega_{\f{r}2}(t')\bigr\|_{L^2}^{4\left(\al(r)+\frac 1 p \right)}
\bigl\|\na\omega_{\f{r}2}(t')\bigr\|_{L^2}^{2\left(1-\frac 2 p \right)}\,dt'\Bigr)^{\frac r2}.
\end{align*}
Applying H\"{o}lder's and Young's inequalities yields
\begin{equation}\begin{split}\label{6.5}
e(t)&\Rmnum{2}_1(t)\leq e(t)
\Bigl(\int_0^t\|v^3(t')\|_{SC}^p\bigl\|\omega_{\f{r}2}(t')\bigr\|_{L^2}^{2\left(1+2p\al(r)\right)}dt'\Bigr)^{\frac r2\cdot\frac1p}
\Bigl( \int_0^t\bigl\|\na\omega_{\f{r}2}(t')\bigr\|_{L^2}^2 dt'\Bigr)^{\frac r2\cdot\frac{1}{p'}}\\
&\leq \frac{r-1}{3r^2}\int_0^t\bigl\|\na\omega_{\f{r}2}(t')\bigr\|_{L^2}^2 dt'
+e(t)\Bigl(\int_0^t\|v^3(t')\|_{SC}^p\bigl\|\omega_{\f{r}2}(t')\bigr\|_{L^2}^{2\left(1+2p\al(r)\right)}dt'\Bigr)^{\frac{1}{1+2p\alpha(r)}},
\end{split}\end{equation}
and
\begin{equation*}\begin{split}\label{6.6}
e(t)&\Rmnum{2}_2(t)\leq e(t)
\Bigl(\int_0^t\|v^3(t')\|_{SC}^p\bigl\|\omega_{\f{r}2}(t')\bigr\|_{L^2}^{2\left(1+p\al(r)\right)}dt'\Bigr)^{\frac r2\cdot\frac2p}
\Bigl( \int_0^t\bigl\|\na\omega_{\f{r}2}(t')\bigr\|_{L^2}^2 dt'\Bigr)^{\frac r2\cdot\bigl(1-\frac{2}{p}\bigr)}\\
&\leq \frac{r-1}{3r^2}\int_0^t\bigl\|\na\omega_{\f{r}2}(t')\bigr\|_{L^2}^2 dt'
+e(t)\Bigl(\int_0^t\|v^3(t')\|_{SC}^p\bigl\|\omega_{\f{r}2}(t')\bigr\|_{L^2}^{2\left(1+p\al(r)\right)}dt'\Bigr)^{\frac{1}{1+p\alpha(r)}}.
\end{split}\end{equation*}
It is easy to observe that
\begin{align*}
\Bigl(\int_0^t\|v^3(t')\|_{SC}^p\bigl\|\omega_{\f{r}2}(t')\bigr\|_{L^2}^{2\left(1+p\al(r)\right)}&dt'\Bigr)^{\frac{1}{1+p\alpha(r)}}
\leq \Bigl(\int_0^t\|v^3(t')\|_{SC}^p dt'\Bigr)^{\frac{p\al(r)}{(1+p\alpha(r))(1+2p\alpha(r))}}\\
&\times \Bigl(\int_0^t\|v^3(t')\|_{SC}^p\bigl\|\omega_{\f{r}2}(t')\bigr\|_{L^2}^{2\left(1+2p\al(r)\right)}dt'\Bigr)^{\frac{1}{1+2p\alpha(r)}}.
\end{align*}
Thus we achieve
$$e(t)\Rmnum{2}_2(t)\leq \frac{r-1}{3r^2}\int_0^t\bigl\|\na\omega_{\f{r}2}(t')\bigr\|_{L^2}^2 dt'
+e(t)\Bigl(\int_0^t\|v^3(t')\|_{SC}^p\bigl\|\omega_{\f{r}2}(t')\bigr\|_{L^2}^{2\left(1+2p\al(r)\right)}dt'\Bigr)^{\frac{1}{1+2p\alpha(r)}}.$$
Inserting the above inequality and \eqref{6.5} into \eqref{6.4} gives, for any $t$ in $[0,T^\star[$
\begin{equation}\begin{split}\label{6.7}
\Bigl(\int_0^t\|\na\pa_3v^3(t')\|_{\cH^{\theta,r}}^2\,dt'\Bigr)^{\frac r2} e(t)
&\leq e(t)\|\Omega_0\|_{L^{r}}^r+\frac{2(r-1)}{3r^2}\int_0^t\bigl\|\na\omega_{\f{r}2}(t')\bigr\|_{L^2}^2 dt'\\
&+e(t)\Bigl(\int_0^t\|v^3(t')\|_{SC}^p\bigl\|\omega_{\f{r}2}(t')\bigr\|_{L^2}^{2\left(1+2p\al(r)\right)}dt'\Bigr)^{\frac{1}{1+2p\alpha(r)}}.
\end{split}\end{equation}
Substituting \eqref{6.7} into \eqref{prop1}, we infer that
\begin{equation}\begin{split}\label{6.8}
\frac 1r \bigl\|\omega_{\frac{r}2}(t) \bigr\|_{L^2}^2 +
\frac{r-1}{3r^2}&\int_0^t \bigl\|\nabla
\omega_{\frac{r}2}(t')\bigr\|_{L^2}^2dt' \leq e(t)\|\Omega_0\|_{L^{r}}^r\\
&+e(t)\Bigl(\int_0^t\|v^3(t')\|_{SC}^p\bigl\|\omega_{\f{r}2}(t')\bigr\|_{L^2}^{2\left(1+2p\al(r)\right)}dt'\Bigr)^{\frac{1}{1+2p\alpha(r)}}.
\end{split}\end{equation}
Taking the power $1+2p\alpha(r)$ to the above  inequality, and then applying Gronwall's lemma leads to Inequality \eqref{6.1}.

On the other hand, it follows from Proposition \ref{prop2.2} that, for any $t\in[0,T^\star[$,
\begin{equation}\begin{split}\label{6.9}
& \|\pa_3v^3(t)\|_{\cH^{\theta,r}}^2
 +\int_0^t\|\na\pa_3v^3(t')\|_{\cH^{\theta,r}}^2\,dt' \\
 &\qquad\qquad{}\leq e(t)\biggl(\|\Omega_0\|_{L^{r}}^2
 + \bigl\|v^3\bigr\|_{L^p_t(SC)}\bigl\|\omega_{\f{r}2}\bigr\|_{L^\infty_t(L^2)}^{2\left(2\al(r)+\frac 1 p \right)}
\bigl\|\na\omega_{\f{r}2}\bigr\|_{L^2_t(L^2)}^{\frac 2 {p'}}\\
&\qquad\qquad\qquad\qquad{}+\|v^3\|_{L^p_t(SC)}^2\bigl\|\omega_{\f{r}2}\bigr\|_{L^\infty_t(L^2)}^{4\left(\al(r)+\frac 1 p \right)}
\bigl\|\na\omega_{\f{r}2}\bigr\|_{L^2_t(L^2)}^{2\left(1-\frac 2 p \right)}\biggr).
\end{split}\end{equation}
Inserting the Estimate \eqref{6.1} into  \eqref{6.9}  concludes the proof of \eqref{6.2} and thus Proposition \ref{prop6.1}.
\end{proof}

Before proceeding, let us recall the following regularity criteria from \cite{CZ5}:

\begin{thm}[Theorem 2.1 of \cite{CZ5}]\label{thm6.1}
{\sl Let $v$ be a solution of $(NS)$ in the space $C([0,T^\star[;\dH^{\frac12})\bigcap$
$L^2_{loc}([0,T^\star[;\dH^{\frac32})$. If $T^\star$ is the maximal time of existence and $T^\star<\infty$,
then for the norm $\mathcal{B}_p\eqdefa \dB^{-2+\frac2p}_{\infty,\infty}$, and any $(p_{k,\ell})$ in $]1,\infty[\, ^9$, we have
$$\sum_{1\leq k,\ell \leq 3} \int_0^{T^\star}\bigl\|\pa_\ell v^k(t)\bigr\|_{\mathcal{B}_{p_{k,\ell}}}^{p_{k,\ell}} dt=\infty.$$
}\end{thm}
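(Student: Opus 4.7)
The plan is by contradiction. Assume $T^\star$ is finite and that $\int_0^{T^\star}\|\pa_\ell v^k(t)\|^{p_{k,\ell}}_{\cB_{p_{k,\ell}}}\,dt<\infty$ for every $1\leq k,\ell\leq 3$. The goal is to prove $v\in L^\infty_{T^\star}(\dH^{1/2})\cap L^2_{T^\star}(\dH^{3/2})$, which by interpolation gives $v\in L^p_{T^\star}(\dH^{1/2+2/p})$ for every $p\in[2,\infty[$, contradicting the Fujita--Kato blow-up criterion \eqref{blowupbasic}.

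First I perform an $\dH^{1/2}$ energy estimate via Littlewood--Paley: applying $\dj$ to $(NS)$, taking the $L^2$ inner product with $\dj v$, using the divergence-free condition to remove the pressure, then multiplying by $2^j$ and summing over $j\in\Z$ yields
\[
\f{d}{dt}\|v\|^2_{\dH^{1/2}}+2\|\na v\|^2_{\dH^{1/2}}=-2\sum_{j\in\Z}2^j\bigl(\dj(v\cdot\na v),\dj v\bigr)_{L^2}.
\]
The whole work is to dominate the right-hand side by $\|\na v\|^2_{\dH^{1/2}}+f(t)\|v\|^2_{\dH^{1/2}}$ with $f\in L^1(0,T^\star)$. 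Expanding $v\cdot\na v=\sum_{k,\ell}v^k\pa_k v^\ell\,e_\ell$, I apply Bony's decomposition $v^k\pa_k v^\ell=T_{v^k}\pa_k v^\ell+T_{\pa_k v^\ell}v^k+R(v^k,\pa_k v^\ell)$. The term $T_{\pa_k v^\ell}v^k$ is the model piece: the definition of $\cB_{p}=\dB^{-2+2/p}_{\infty,\infty}$ gives the Bernstein-type bound $\|\Delta_q\pa_k v^\ell\|_{L^\infty}\lesssim 2^{q(2-2/p_{\ell,k})}\|\pa_k v^\ell\|_{\cB_{p_{\ell,k}}}$, hence
\[
\|\dj T_{\pa_k v^\ell}v^k\|_{L^2}\lesssim 2^{j(2-2/p_{\ell,k})}\|\pa_k v^\ell\|_{\cB_{p_{\ell,k}}}\sum_{|q-j|\leq 4}\|\Delta_q v^k\|_{L^2}.
\]
For the remaining pieces $T_{v^k}\pa_k v^\ell$ and $R(v^k,\pa_k v^\ell)$ the derivative sits on the ``wrong'' factor, so I use $\sum_k\pa_k v^k=0$ and an integration by parts against $\dj v^\ell$ to transfer the derivative onto the factor that is controlled by the assumed norm.

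Inserting these bounds, using the interpolation $\|v\|_{\dH^{3/2-1/p}}\leq\|v\|^{1/p}_{\dH^{1/2}}\|\na v\|^{1-1/p}_{\dH^{1/2}}$, and finally Young's inequality with conjugate exponents $p_{k,\ell}$ and $p_{k,\ell}/(p_{k,\ell}-1)$ (available precisely because $p_{k,\ell}>1$) to absorb $\|\na v\|^2_{\dH^{1/2}}$ into the dissipation, I obtain
\[
\f{d}{dt}\|v\|^2_{\dH^{1/2}}+\|\na v\|^2_{\dH^{1/2}}\leq C\sum_{k,\ell=1}^{3}\|\pa_\ell v^k\|^{p_{k,\ell}}_{\cB_{p_{k,\ell}}}\|v\|^2_{\dH^{1/2}}.
\]
Gronwall's lemma together with the standing assumption closes the estimate, and interpolation with the resulting $L^2_{T^\star}(\dH^{3/2})$ bound finishes the contradiction.

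The hard part is the paraproduct bookkeeping in the previous paragraph. Because the regularity index $-2+2/p_{k,\ell}$ is negative and approaches $-2$ as $p_{k,\ell}\to\infty$, the Bernstein gain $2^{q(2-2/p_{k,\ell})}$ can be as large as almost $2^{2q}$, so the argument is tight: one has to arrange each piece of $T_{v^k}\pa_k v^\ell+T_{\pa_k v^\ell}v^k+R(v^k,\pa_k v^\ell)$ so that the controlled factor is $\pa_\ell v^k$ and not $v^k$ itself (which lies only in $\dH^{1/2}$). This also explains why the precise exponents $p_{k,\ell}$ enter only via the conjugate pair $(p_{k,\ell},p_{k,\ell}/(p_{k,\ell}-1))$ in Young's inequality, which forces the assumption $p_{k,\ell}\in\,]1,\infty[$.
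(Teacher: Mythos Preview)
The paper does not prove this statement: it is quoted verbatim as Theorem~2.1 of~\cite{CZ5} and is used only as a black box in the proof of Theorem~\ref{thmmain1}. So there is no ``paper's own proof'' to compare against here.

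Your sketch is the right overall architecture for how such a result is actually proved (and is indeed the scheme followed in~\cite{CZ5}): contradiction, localized $\dH^{1/2}$ energy estimate, Bony decomposition of $v^k\pa_k v^\ell$, trilinear bounds exploiting $\|\D_q\pa_\ell v^k\|_{L^\infty}\lesssim 2^{q(2-2/p_{k,\ell})}\|\pa_\ell v^k\|_{\cB_{p_{k,\ell}}}$, interpolation $\|v\|_{\dH^{3/2-1/p}}\leq\|v\|_{\dH^{1/2}}^{1/p}\|\na v\|_{\dH^{1/2}}^{1-1/p}$, then Young's inequality with exponents $(p_{k,\ell},p_{k,\ell}')$ and Gronwall. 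Two remarks on the one step you leave vague. First, the piece $T_{v^k}\pa_k v^\ell$ is not handled by a bare integration by parts; one writes a commutator
\[
\bigl(\dj(S_{j'-1}v^k\,\D_{j'}\pa_k v^\ell)\,\big|\,\dj v^\ell\bigr)_{L^2}
=\bigl([\dj,S_{j'-1}v^k]\D_{j'}\pa_k v^\ell\,\big|\,\dj v^\ell\bigr)_{L^2}
+\bigl(S_{j'-1}v^k\,\dj\D_{j'}\pa_k v^\ell\,\big|\,\dj v^\ell\bigr)_{L^2},
\]
the second summand vanishes after summation in~$k$ by the divergence-free condition and integration by parts, and the commutator gains a factor $2^{-j}\|\na S_{j'-1}v^k\|_{L^\infty}$, which is exactly the quantity your Besov assumption controls. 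Second, since each component $\pa_\ell v^k$ carries its own exponent $p_{k,\ell}$, one must keep track of which index pair enters each of the nine trilinear pieces; this is mere bookkeeping but is the reason the statement allows an arbitrary matrix $(p_{k,\ell})\in\,]1,\infty[^9$ rather than a single~$p$.
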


Now we are in a position  to complete the  proof of  Theorem \ref{thmmain1}.

\begin{proof}[Proof of Theorem \ref{thmmain1}] If we assume that $\int_0^{T^\star}\bigl\|v^3(t)\bigr\|_{SC}^p dt$ is finite, we deduce from Proposition \ref{prop6.1}
 that  the following quantities
\begin{equation}\label{6.10}
\|\omega\|_{L^\infty([0,T^\star[;L^r)},\quad \int_0^{T^\star}\bigl\|\nabla\omr(t)\bigr\|_{L^2}^2 dt,
\quad \mbox{and} \quad \int_0^{T^\star}\bigl\|\nabla\p3 v^3(t)\bigr\|_{\htr}^2 dt
\end{equation} are finite.

It follows from Lemma \ref{lemBern} that
$$\max\limits_{1\leq \ell \leq 3}\bigl\|\pa_\ell v^3\bigr\|_{\cB_p}
\lesssim\sup\limits_{j\in\Z}2^{j\bigl(-1+\frac2p\bigr)}\bigl\|\dj v^3\bigr\|_{L^\infty}
\lesssim \sup\limits_{j\in\Z}\bigl\|\dj v^3\bigr\|_{L^{\frac{3p}{p-2}}}
\lesssim \bigl\|v^3\bigr\|_{L^{\frac{3p}{p-2}}},$$
which together with \eqref{k.1} ensures that
\begin{equation}\label{thm6.1.1}
\max\limits_{1\leq \ell \leq 3}\int_0^{T^\star} \bigl\|\pa_\ell v^3(t)\bigr\|_{\cB_p}^p dt
\lesssim \int_0^{T^\star}\bigl\|v^3(t)\bigr\|_{SC}^p dt<\infty.
\end{equation}

Let us turn the estimate of the horizontal components of the velocity field. For $\vhdiv=-\nablah\Laplacianh\p3 v^3$, we get,
by using  \eqref{thm6.1.1}, that
\begin{equation}\label{thm6.1.2}
\int_0^{T^\star}\bigl\|\nablah\vhdiv(t)\bigr\|_{\cB_p}^p dt
\lesssim \int_0^{T^\star}\bigl\|\p3v^3(t)\bigr\|_{\cB_p}^p dt<\infty.
\end{equation}
While for any distribution $a$, we deduce from Lemma \ref{lemBern} that
\begin{equation}\begin{split}\label{thm6.1.3}
\|\dj a\|_{L^\infty}& \lesssim\sum_{k\leq j+1}\sum_{\ell\leq j+1} 2^k 2^{\frac{\ell}{2}}\|\dhk\dvl a\|_{L^2}\\
& \lesssim \|a\|_{\dH^{1-3\alpha(r)+\theta,-\theta}} \sum_{k\leq j+1}\sum_{\ell\leq j+1}
2^{k(3\alpha(r)-\theta)}2^{\ell\bigl(\frac12+\theta\bigr)}\\
& \lesssim 2^{j\bigl(2-\frac{3}{r'}\bigr)} \|a\|_{\dH^{1-3\alpha(r)+\theta,-\theta}}.
\end{split}\end{equation}
Let $q(r)\eqdefa \frac{2r'}{3}$, \eqref{thm6.1.3} implies
\begin{equation}\label{thm6.1.4}
\bigl\|\p3\vhdiv\bigr\|_{\cB_{\frac{2r'}{3}}}\lesssim\bigl\|\p3\vhdiv\bigr\|_{\dH^{1-3\alpha(r)+\theta,-\theta}}
=\bigl\|\nablah\Laplacianh\p3^2 v^3\bigr\|_{\dH^{1-3\alpha(r)+\theta,-\theta}}\lesssim\bigl\|\p3^2 v^3\bigr\|_{\htr}.
\end{equation}
Due to $r\in]\frac32,2[$, $q(r)\in]\frac43,2[$  then we get, by  applying \eqref{thm6.1.4} and H\"{o}lder inequality, that
\begin{equation}\label{thm6.1.5}
\int_0^{T^\star}\bigl\|\p3\vhdiv(t)\bigr\|_{\cB_{q(r)}}^{q(r)} dt\lesssim T^{\star\bigl(1-\frac{q(r)}{2}\bigr)}
\Bigl(\int_0^{T^\star}\bigl\|\p3^2 v^3(t)\bigr\|_{\htr}^2 dt\Bigr)^{\frac{q(r)}{2}}<\infty.
\end{equation}

On the other hand, we deduce from Lemma \ref{lemBern} that
\begin{equation}\label{thm6.1.6}
\bigl\|\nablah\vhcurl\bigr\|_{\cB_{q(r)}}\lesssim\bigl\|\ph^2\Laplacianh\omega\bigr\|_{\dH^{1-3\alpha(r)}}
\lesssim\bigl\|\nabla\omega\bigr\|_{\dH^{-3\alpha(r)}}\lesssim\bigl\|\nabla\omega\bigr\|_{L^r}.
\end{equation}
Applying Lemma \ref{lemBern} once again and  using the fact that $r<2$, we infer
\begin{align*}
2^{j\bigl(-2+\frac{3}{r'}\bigr)}\bigl\|\dj\p3\vhcurl\bigr\|_{L^\infty}
&\lesssim 2^{j\bigl(-2+\frac{3}{r'}\bigr)}\sum_{k\leq j+1}\sum_{\ell\leq j+1}
\|\dhk\dvl\p3\nablah^\perp\Laplacianh\omega\|_{L^\infty}\\
&\lesssim \|\p3\omega\|_{L^r} 2^{j\bigl(-2+\frac{3}{r'}\bigr)}\sum_{k\leq j+1}\sum_{\ell\leq j+1}
2^{k\bigl(\frac2r-1\bigr)}2^{\frac{\ell}{r}}\lesssim \|\p3\omega\|_{L^r}.
\end{align*}
This together with the Estimate (\ref{thm6.1.6}) and  Lemma \ref{BiotSavartomega} ensures that
$$\bigl\|\nabla\vhcurl(t)\bigr\|_{\cB_{q(r)}}\lesssim\|\nabla\omega(t)\|_{L^r}
\lesssim\bigl\|\omr\bigr\|_{L^\infty([0,T^\star[;L^2)}^{\frac2r-1}\bigl\|\nabla\omr(t)\bigr\|_{L^2}.$$
Using again the fact that $q(r)\in]\frac43,2[$, we get, by using the H\"{o}lder inequality, that
\begin{equation}\label{thm6.1.7}
\int_0^{T^\star}\bigl\|\nabla\vhcurl(t)\bigr\|_{\cB_{q(r)}}^{q(r)} dt\lesssim T^{\star\bigl(1-\frac{q(r)}{2}\bigr)}
\bigl\|\omr\bigr\|_{L^\infty([0,T^\star[;L^2)}^{\frac2r-1}
\Bigl(\int_0^{T^\star}\bigl\|\nabla \omr(t)\bigr\|_{L^2}^2 dt\Bigr)^{\frac{q(r)}{2}}<\infty.
\end{equation}

With the estimates \eqref{thm6.1.1},~\eqref{thm6.1.2},~\eqref{thm6.1.5} and \eqref{thm6.1.7},  Theorem \ref{thmmain1} is a direct
consequence of  Theorem \ref{thm6.1}.
\end{proof}

Finally in the Appendix \ref{apB}, we shall collect some basic facts on Littlwood-Paley theory from \cite{BCD} and some technical lemmas from
\cite{CZ5, CZZ}. While in Appendix \ref{apA}, we present some technical details which will be used in the proof of Proposition \ref{S4prop1}.

\setcounter{equation}{0}
\section{Proof of the  estimate for the horizontal vorticity}\label{Sect3}

 The purpose of this section to present the proof of Proposition \ref{prop2.1}.
 Let us first recall  the $\omega$  equation of ~$(\wt {NS})$ that
 $$
 \partial_t\omega+v\cdot\nabla\omega -\Delta\omega=   \partial_3v^3\omega +\partial_2v^3\partial_3v^1-\partial_1v^3\partial_3v^2.
 $$
By applying Lemma 3.1
 of \cite{CZ5}, we obtain
 \beq
\label{estimateprop1}
\begin{split}
&\frac 1r \bigl\|\omega_{\frac{r}2}(t) \bigr\|_{L^2}^2  +
\frac{4(r-1)}{r^2}\int_0^t \bigl\|\nabla
\omega_{\frac{r}2}(t')\bigr\|_{L^2}^2\,dt' = \frac1r
\bigl\||\omega_0|^{\frac{r}2}
\bigr\|_{L^2}^2 +\sum_{\ell=1}^3 F_\ell(t) \with\\
&F_1(t) \eqdefa  \int_0^t \int_{\R^3} \pa_3v^3 |\omega|^{r}\,dx\,dt'\,,\\
&F_2(t) \eqdefa  \int_0^t \int_{\R^3}\bigl(\pa_2v^3\pa_3v_{\rm curl}^1-\pa_1v^3\pa_3v_{\rm curl}^2\bigr) \omega_{r-1}\,dx\,dt'\andf\\
&F_3(t) \eqdefa  \int_0^t \int_{\R^3}\bigl(\pa_2v^3\pa_3v_{\rm
div}^1-\pa_1v^3\pa_3v_{\rm div}^2 \bigr) \omega_{r-1}\,dx\,dt',
\end{split}
 \eeq
where~$v_{\rm curl}^{\rm h}$ (resp.~$v_{\rm div}^{\rm h}$)
corresponds to the horizontal divergence free (resp. curl free) part of
the horizontal vector~$v^{\rm h}=(v^1,v^2),$ which is given by
\eqref{a.1wrt},  and where~$\omega_{r-1}\eqdefa |\omega|^{r-2}\,\omega$.

Let us start with the easiest term~$F_1$. %%%%%%%%%
 We first get, by using
integration by parts, that \beno |F_1(t)|&\leq&
r\int_0^t\int_{\R^3}|v^3(t',x)|\,|\pa_3\omega(t',x)|\,|\omega(t',x)|^{r-1}\,dx\,dt'\\
&\leq&
r\int_0^t\int_{\R^3}|v^3(t',x)|\,|\pa_3\omega(t',x)|\,|\omega_{\frac{r}2}(t',x)|^{\frac 2 {r'}   }\,dx\,dt'.
\eeno Notice that
$$
\frac {p-2} {3p} + \frac1r +
\frac{2pr-3p+2r}{6p(r-1)}\times\frac 2 {r'} =1,
$$
we get, by applying H\"older inequality, that
$$
|F_1(t)|\leq r
\int_0^t\|v^3(t')\|_{L^{\frac{3p}{p-2}}}\|\pa_3\omega(t')\|_{L^{r}}\bigl\|\omega_{\frac{r}2}(t')\bigr\|_{L^{\frac{6p(r-1)}{2pr-3p+2r}}}^{\frac 2 {r'}   }\,dt'.
$$
As~$p$ is in~$\ds\bigl]4,\frac{2r}{2-r}\bigr[$, we observe that
$\ds r'\frac{p-2}{2p}$ belongs to~$]0,1[.$ Then  Sobolev embedding and
interpolation inequality implies that
$$
\bigl\|\omega_{\frac{r}2}(t')\bigr\|_{L^{\frac{6p(r-1)}{2pr-3p+2r}}}
\lesssim
\bigl\|\omega_{\frac{r}2}  (t')\bigr\|_{\dH^{r'\frac{(p-2)}{2p}}}\lesssim
\bigl\|\omega_{\frac{r}2}(t')\bigr\|_{L^2}^{\frac{2r-p(2-r)}{2p(r-1)}}
\bigl\|\nabla \omega_{\frac{r}2}(t')\bigr\|_{L^2}^{r'\frac{p-2}{2p}},
$$
from which and \refeq{estimbasomega34}, we infer
$$
%\longformule
{ |F_1(t)| \lesssim \int_0^t\|v^3(t')\|_{L^{\frac{3p}{p-2}}}
\bigl\|\pa_3\omega_{\frac{r}2}(t')\bigr\|_{L^2}
\bigl\|\omega_{\frac{r} 2}(t')\bigr\|_{L^2}^{\frac 2 {r}   -1} }
{%{}\times
\bigl\|\na\omega_{\frac{r}2}(t')\bigr\|_{L^2}^{1-\frac 2 p }\bigl\|\omega_{\frac{r}2}(t')\bigr\|_{L^2}^{1-2\left(\frac 1 {r}   -\frac 1 p \right)}\,dt'.
}
$$
Applying Young's inequality gives rise to
\begin{equation}\begin{split}\label{F1}
|F_1(t)| & \lesssim
\int_0^t\|v^3(t')\|_{L^{\frac{3p}{p-2}}}\bigl\|\omega_{\frac{r}2}(t')\bigr\|_{L^2}^{\frac 2 p }\bigl\|\na\omega_{\frac{r}2}(t')\bigr\|_{L^2}^{2\left(1-\f1p\right)} \,dt' \nonumber\\
& \leq
\frac{r-1}{r^2}\int_0^t\bigl\|\na\omega_{\frac{r}2}(t')\bigr\|_{L^2}^2\,dt'
+C\int_0^t\|v^3(t')\|_{L^{\frac{3p}{p-2}}}^p\bigl\|\omega_{\frac{r}2}(t')\bigr\|_{L^2}^{2}\,dt'.
\end{split}\end{equation}

\medbreak The other two terms in \eqref{estimateprop1}
require a refined way to describe the regularity
of~$\omega_{\frac r2}$ and demand a detailed study  of the anisotropic
operator~$\nabla_{\rm h}\D_{\rm h}^{-1}$ associated with the
Biot-Savart's law in horizontal variables. We first modify
Lemma 4.1 of  \cite{CZZ} to the following one.

 %%%%%%%%%
%%%%%%%%%
%%%%%%%%%
 \begin{prop}
 \label{S4prop1}
{\sl Under the assumptions of Proposition \ref{prop2.1} and let $\s=r'\bigl(\f12-\f1p\bigr)$,
we have
\begin{equation}\begin{split}
 \label{b.1}
 \Bigl| \int_{\R^3}
\partial_{\rm h}\D_{\rm h}^{-1}f
\cdot &\partial_{\rm h}a\,\omega_{r-1} dx\Bigr|
\lesssim  \min\bigl\{ \|f\|_{L^{r}},  \| f\|_{\cH^{\theta,r}}\bigr\}\|a\|_{\cB_{q_1,q_2,r}^{\mu,p}}
\bigl\|\omega_{\frac{r}2}\bigr\|_{\dH^\s}^{\frac 2 {r'}},
\end{split}\end{equation} 
where the norm $\|\cdot\|_{\cB_{q_1,q_2,r}^{\mu,p}}$ is given by \eqref{nal}.
} \end{prop}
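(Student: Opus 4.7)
The plan is to estimate the trilinear integral by anisotropic Littlewood--Paley analysis combined with an anisotropic Bony decomposition, exploiting the key fact that $\partial_{\rm h}\D_{\rm h}^{-1}$ is an operator of order $-1$ purely in the horizontal variables. On the dyadic block $\dhk$ it gains a factor of size $2^{-k}$, which exactly balances the loss $2^k$ from the horizontal derivative $\partial_{\rm h}a$. Consequently, at the level of the integrated product only the anisotropic regularity of $a$ (measured in $\cB_{q_1,q_2,r}^{\mu,p}$), the anisotropic/isotropic regularity of $f$ (measured in $L^r$ or $\cH^{\theta,r}$), and the regularity of $\omega_{r/2}$ (measured in $\dH^\s$) remain to be tracked.

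First I would split $\partial_{\rm h}a\cdot\omega_{r-1}$ using Bony's para-product in both horizontal and vertical variables, writing it as a sum of para-product and remainder pieces of the form $\Shk\Svl\omega_{r-1}\cdot\dhk\dvl\partial_{\rm h}a$ together with the symmetric terms. For each piece, I would pair against $\partial_{\rm h}\D_{\rm h}^{-1}f$ and apply an anisotropic H\"older inequality in $L^{a_1}_{\rm h}L^{a_2}_{\rm v}$; the H\"older exponents are chosen so that (i) the block $\dhk\dvl a$ is absorbed into $\|a\|_{\cB_{q_1,q_2,r}^{\mu,p}}$ after suitable horizontal/vertical Bernstein gains, and (ii) the factor $\omega_{r-1}$ is handled via the identity $|\omega_{r-1}|^{r'}=|\omega_{r/2}|^{2}$ followed by an anisotropic Sobolev embedding, ultimately producing the factor $\|\omega_{r/2}\|_{\dH^\s}^{2/r'}$ with $\s=r'\bigl(\tfrac12-\tfrac1p\bigr)$.

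For the first bound in the minimum, involving $\|f\|_{L^{r}}$, I would use that $\partial_{\rm h}\D_{\rm h}^{-1}\partial_{\rm h}$ is a second-order horizontal Riesz-type transform, bounded on mixed Lebesgue spaces; after an integration by parts moving the remaining $\partial_{\rm h}$ onto another factor, the $L^r$ norm of $f$ is enough. For the second bound, involving $\|f\|_{\cH^{\theta,r}}=\|f\|_{\dH^{-3\al(r)+\th,-\th}}$, I would apply horizontal and vertical Bernstein inequalities (Lemma~\ref{lemBern}) on each $\dhk\dvl\bigl(\partial_{\rm h}\D_{\rm h}^{-1}f\bigr)$ to convert the anisotropic Sobolev norm into a block-wise $L^\infty$-type control; the extra weights $2^{k(3\al(r)-\th)}2^{\ell(1/2+\th)}$ combine with the gain $2^{-k}$ and the Besov/Sobolev losses so that, crucially, the hypotheses $\mu>\al(r)$ and $\th\in ]0,\al(r)[$ render the resulting double series in $(k,\ell)$ summable.

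The main obstacle, in my view, is the exponent book-keeping across the four Bony pieces in two pairs of variables: one must simultaneously arrange H\"older sums, Bernstein losses/gains, and Sobolev embedding exponents so that they balance and the double $(k,\ell)$ series converge as convolutions of $\ell^\kappa$ and its conjugate $\ell^{\kappa/(\kappa-1)}$ sequences. The permitted ranges $q_1\in[1,2[$, $q_2\in\bigl[2,(1/p+3\al(r)+\mu)^{-1}\bigr[$, $\mu>\al(r)$ and $\th\in]0,\al(r)[$ provide precisely the slack required for these exponent inequalities to close; verifying their compatibility is essentially mechanical but delicate.
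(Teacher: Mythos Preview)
Your overall strategy matches the paper's: Bony decomposition of $\partial_{\rm h}a\,\omega_{r-1}$ (in both horizontal and vertical variables), exploitation of the horizontal gain from $\partial_{\rm h}\D_{\rm h}^{-1}$, and separate duality arguments against $f$ in $L^r$ versus $\cH^{\theta,r}$. The paper also uses the specific rewriting $\Th(\omega_{r-1},\partial_{\rm h}a)=\partial_{\rm h}\Th(\omega_{r-1},a)-\Th(\partial_{\rm h}\omega_{r-1},a)$, which is the precise form of the ``integration by parts'' you allude to and is what makes $\partial_{\rm h}^2\D_{\rm h}^{-1}f$ appear cleanly.

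There is, however, a genuine gap in your treatment of $\omega_{r-1}$. The pointwise identity $|\omega_{r-1}|^{r'}=|\omega_{r/2}|^{2}$ followed by Sobolev embedding only produces Lebesgue control of $\omega_{r-1}$, i.e.\ $\|\omega_{r-1}\|_{L^{q}}\lesssim\|\omega_{r/2}\|_{\dH^\s}^{2/r'}$ for a single exponent $q$. That is \emph{not} enough: in the paraproduct pieces one needs dyadic bounds of the form $\|\dhk\dvl\omega_{r-1}\|_{L^{r'}}\lesssim c_{(k,\ell),r'}2^{-k s_1}2^{-\ell s_2}$ with $s_1+s_2=2\s/r'>0$, i.e.\ genuine Besov regularity $\|\omega_{r-1}\|_{\dB^{2\s/r'}_{r',r'}}$. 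The paper obtains this via a composition lemma for H\"older functions (Lemma~\ref{puisancealphaBesov}): writing $\omega_{r-1}=G(\omega_{r/2})$ with $G(z)=z|z|^{-2\al(r)}\in C^{2/r'}$ gives
\[
\bigl\|\omega_{r-1}\bigr\|_{\dB^{2\s/r'}_{r',r'}}\lesssim\bigl\|\omega_{r/2}\bigr\|_{\dH^\s}^{2/r'},
\]
which is then anisotropized via Lemma~\ref{lem4.3ofCZ5} into the norms $\|\omega_{r-1}\|_{\dB^{1-2/p-\d,\d}_{r',r'}}$ that actually drive the summability in $(k,\ell)$. Without this step the double series in your scheme will not close at positive regularity, and the exponent bookkeeping you describe cannot be completed.
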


\begin{proof}
Observe that $\ds \omega_{r-1} =G(\omega_{\frac{r}2})$ with $G(z)\eqdefa z|z|^{-2\alpha(r)}$. It follows from  Lemma\refer{puisancealphaBesov} that
 \begin{equation}
 \label{anisotropicomega}
\bigl\|\omega_{r-1}\bigr\|_{\dB^{\frac {2\s} {r'} }_{r'   ,r'   }}
\lesssim \bigl\|\omega_{\frac{r}2}\bigr\|_{\dH^\s}^{\frac 2 {r'}   }\quad \forall \s\in ]0,1[.
 \end{equation}
Let us study the product $\partial_{\rm h}a\,\omega_{r-1}.$ By applying Bony's decomposition in the horizontal variables, we
 write
 \beno
 \partial_{\rm h} a\, \omega_{r-1} & = & \Th({\partial_{\rm h} a}, \omega_{r-1}) +\Rh(\partial_{\rm h}a,\omega_{r-1})+
 \Th({\omega_{r-1}}, \partial_{\rm h}a)\\
 & = & \partial_{\rm h}\Th({\omega_{r-1}}, a) +A(a,\omega)\with \\
 A(a,\omega) &\eqdefa&
  \Th({\partial_{\rm h} a}, \omega_{r-1}) +\Rh(\partial_{\rm h}a,\omega_{r-1})
 -\Th({ \partial_{\rm h}\omega_{r-1}},a).
 \eeno
In view of Lemma \ref{lemBern}, it is obvious that we only need to prove
\eqref{b.1} for $q_1\in[r,2[$ and $q_2\in\bigl]r',\bigl(\frac1p+3\al(r)+\mu\bigr)^{-1}\bigr[$.
Then we can estimate the above term by term as follows:
\begin{align}
\|&\Th(\omro,a)\|_{L^{r'}}+\|\Th(\omro,a)\|_{\dH^{3\alpha(r)-\theta,\theta}}\lesssim
\|a\|_{\cB_{2,q_2,r}^{\mu,p}}
\bigl\|\omega_{\frac{r}2}\bigr\|_{\dH^\s}^{\frac 2 {r'}},\label{S4eq1}\\
\bigl\|\bigl(\Th(\partial_{\rm h}& a, \omega_{r-1}),\Th({\partial_{\rm h}\omega_{r-1} }, a)\bigr)
\bigr\|_{\bigl(B^{\mu-\d_1}_{\f{2r'}{r'+2},2}\bigr)_\h \bigl(H^{\d_1+\al(r)-\mu}\bigr)_{\v}}\lesssim
\|a\|_{\cB_{2,q_2,r}^{\mu,p}}
\bigl\|\omega_{\frac{r}2}\bigr\|_{H^\s}^{\f2{r'}},\label{S4eq5}\\
\|&R^\h({\partial_{\rm h} a},\omro)\|_{\bigl(B^{\mu+\f2{q_1}-1-\d_2}_{\f{q_1r'}{q_1+r'},2}\bigr)_\h\bigl(H^{\d_2+\al(r)-\mu}\bigr)_\v}\lesssim
 \|a\|_{\cB_{q_1,q_2,r}^{\mu,p}}\bigl\|\omega_{\f{r}2}\bigr\|_{H^\s}^{\f2{r'}},\label{S4eq6}
\end{align}
where $\d_1\in \bigl]\mu-\al(r), 1-2/p\bigr[$ and $\d_2\in \bigl]0,\min\bigl(1-\f2p,\mu-1+2/{q_1}\bigr)\bigr[$.
The proofs of \eqref{S4eq1}-\eqref{S4eq6} will be postponed to Appendix \ref{apA}.
Let us continue our proof of the proposition.

Note that $q_1\in ]1,2[$, we have $\cB_{q_1,q_2,r}^{\mu,p}\hookrightarrow\cB_{2,q_2,r}^{\mu,p}$.
Then we deduce from \eqref{S4eq1}, that
\begin{equation}\begin{split}\label{conclusion1}
\bigl|\int_{\R^3}\ph\Laplacianh f\cdot&\ph\Th(\omro,a)dx\bigr|=\bigl|\int_{\R^3}\ph^2\Laplacianh f\cdot\Th(\omro,a)dx\bigr|\\
&\lesssim\min\left\{\|f\|_{L^r}\|\Th(\omro,a)\|_{L^{r'}},\|f\|_{\htr}\|\Th(\omro,a)\|_{\dH^{3\alpha(r)-\theta,\theta}}\right\}\\
&\lesssim\min\bigl\{\|f\|_{L^r},\|f\|_{\htr}\bigr\}\|a\|_{\cB_{q_1,q_2,r}^{\mu,p}}
\|\omr\|_{\dH^{\sigma}}^{\frac{2}{r'}}.
\end{split}\end{equation}

Whereas for any $r\in ]1,2[,$ by using Minkowski's inequality and Lemma \ref{Thm2.40BCD} twice, we have
\begin{equation}\begin{split}\label{S4eq7}
\|f\|_{\dB_{r,2}^{0,0}}
\lesssim &\Bigl(\sum_{k\in\Z}\bigl\|\bigl(\sum_{\ell\in\Z}\|\D_\ell^\v\D_k^\h f\|_{L^r_\v}^2\bigr)^{\f12}\bigr\|_{L^r_\h}^2\Bigr)^{\f12}\\
\lesssim &\Bigl(\sum_{k\in\Z}\bigl\|\|\D_k^\h f\|_{L^r_\v}\bigr\|_{L^r_\h}^2\Bigr)^{\f12}\\
\lesssim &\Bigl\|\bigl(\sum_{k\in\Z}\|\D_k^\h f\|_{L^r_\h}^2\bigr)^{\f12}\Bigr\|_{L^r_\v}\lesssim \|f\|_{L^r}.
\end{split}\end{equation}
And it follows from Lemma \ref{lemBern} once again that
\beno
\bigl(\dB^{0}_{\f{2r'}{r'+2},2}\bigr)_\h \bigl(\dH^{\al(r)}\bigr)_{\v}
\hookrightarrow \dB^{-1,0}_{r',2},
\ \bigl(\dB^{\f2{q_1}-1}_{\f{q_1r'}{q_1+r'},2}\bigr)_\h\bigl(\dH^{\al(r)}\bigr)_\v
\hookrightarrow \dB^{-1,0}_{r',2}.
\eeno
Using \eqref{S4eq5},~\eqref{S4eq6} with $\d_1=\d_2=\mu$, and \eqref{S4eq7},
 we achieve
\begin{equation*}\begin{split}
\bigl|\int_{\R^3}&\ph\Laplacianh f\cdot\bigl(\Th(\ph a,\omro)+\Th(\ph \omro, a)+\Rh(\ph a,\omro)\bigr)\,dx\bigr|\\
\leq &\|\ph\Laplacianh f\|_{\dB^{1,0}_{r,2}}
\bigl\|\bigl(\Th(\ph a,\omro), \Th( \ph\omro, a),\Rh(\ph a,\omro)\bigr)\bigr\|_{\dB^{-1,0}_{r',2}}\\
\lesssim &\|f\|_{L^r}\|a\|_{\cB_{q_1,q_2,r}^{\mu,p}}
\|\omr\|_{\dH^{\sigma}}^{\frac{2}{r'}}.
\end{split}\end{equation*}
Combining the above estimate with \eqref{conclusion1}, we conclude that
\begin{equation}\label{conclusion}
 \Bigl| \int_{\R^3}
\partial_{\rm h}\D_{\rm h}^{-1}f\cdot
\partial_{\rm h}a \,\omega_{r-1} dx\Bigr|
\lesssim \|f\|_{L^r}\|a\|_{\cB_{q_1,q_2,r}^{\mu,p}}
\|\omr\|_{\dH^{\sigma}}^{\frac{2}{r'}}.
\end{equation}

On the other hand, it follows from Lemma \ref{lemBern} once again that
\beno
\bigl(\dB^{\al(r)-\th}_{\f{2r'}{r'+2},2}\bigr)_\h \bigl(\dH^{\th}\bigr)_{\v}
\hookrightarrow \dH^{-1+3\al(r)-\th,\th},\
\bigl(\dB^{\f2{q_1}-1+\al(r)-\th}_{\f{q_1r'}{q_1+r'},2}\bigr)_\h\bigl(\dH^{\th}\bigr)_\v
\hookrightarrow \dH^{-1+3\al(r)-\th,\th}.
\eeno
Using \eqref{S4eq5},~\eqref{S4eq6} with
$\d_1=\d_2=\mu+\th-\al(r)$ yields
\begin{equation*}\begin{split}
&\bigl|\int_{\R^3}\ph\Laplacianh f\cdot\bigl(\Th(\ph a,\omro)+\Th(\ph\omro,a)+\Rh(\ph a,\omro)\bigr)\,dx\bigr|\\
&\leq\|\ph\Laplacianh f\|_{\dH^{1-3\alpha(r)+\theta,-\theta}}
\bigl\|\bigl(\Th(\ph a,\omro),\Th(\ph \omro,a),\Rh(\ph a,\omro)\bigr)\bigr\|_{\dH^{-1+3\alpha(r)-\theta,\theta}}\\
&\lesssim\|f\|_{\htr}\|a\|_{\cB_{q_1,q_2,r}^{\mu,p}}\|\omega_{\frac{r}2}\bigr\|_{\dH^\s}^{\frac 2 {r'}},
\end{split}\end{equation*}
which together with \eqref{conclusion1} gives rise to
\begin{equation}\label{conclusion1}
 \Bigl| \int_{\R^3}
\partial_{\rm h}\D_{\rm h}^{-1}f
\partial_{\rm h}a \,\omega_{r-1} dx\Bigr|
\lesssim \|f\|_{\htr}\|a\|_{\cB_{q_1,q_2,r}^{\mu,p}}
\bigl\|\omega_{\frac{r}2}\bigr\|_{\dH^\s}^{\frac 2 {r'}}.
\end{equation}

Combining the Estimates \eqref{conclusion} and \eqref{conclusion1}, we complete the proof of this proposition.
\end{proof}

\medbreak  The estimate of~$F_2(t)$ uses the Biot-Savart's law in the
horizontal variables  (namely\refeq{a.1wrt}) and
 Proposition \ref{S4prop1} with~$f=\partial_3\omega$,~$a=v^3$ and
$\sigma=\frac{(p-2)r'}{2p}$, which is in $]\frac{r'}{4},1[$ provided $p\in]4,\frac{2r}{2-r}[$. This
gives for any time~$t<T^\star$ that
\begin{equation}\begin{split}\label{Iomega}
I_\omega(t)&\eqdefa\Bigl|\int_{\R^3}\bigl(\pa_2v^3(t,x)\pa_3v_{\rm
curl}^1(t,x)-\pa_1v^3(t,x)\pa_3v_{\rm curl}^2(t,x) \bigr)
\omega_{r-1}(t,x)\,dx\Bigr|\\
&\lesssim  \|v^3(t)\|_{\cB_{q_1,q_2,r}^{\mu,p}}\|\pa_3\omega(t) \|_{L^{r}}
\bigl\|\omega_{\frac{r}2}(t)\bigr\|_{\dH^{\frac{(p-2)r'}{2p}}}^{\frac 2 {r'}}.
\end{split}\end{equation}
By virtue of \eqref{estimbasomega34} and of the interpolation
inequalities between~$L^2$ and~$\dH^1$,
\eqref{Iomega} implies
\begin{equation*}\begin{split}
I_\omega(t)&\lesssim  \|v^3(t)\|_{\cB_{q_1,q_2,r}^{\mu,p}}
\bigl\|\omr(t)\bigr\|_{L^2}^{\frac2r-1}\bigl\|\nabla\omr(t)\bigr\|_{L^2}
\bigl\|\omr(t)\bigr\|_{L^2}^{\frac2{r'}-\frac{p-2}{p}}\bigl\|\nabla\omr(t)\bigr\|_{L^2}^{\frac{p-2}{p}}\\
&\lesssim \|v^3(t)\|_{\cB_{q_1,q_2,r}^{\mu,p}}
\bigl\|\omr(t)\bigr\|_{L^2}^{\frac2p}\bigl\|\nabla\omr(t)\bigr\|_{L^2}^{2(1-\frac1p)}.
\end{split}\end{equation*}
Then by using Young's inequality and integrating in time, we get
\begin{equation}\label{F2}
|F_2(t)|\leq \frac{r-1}{r^2}\int_0^t \bigl\|\nabla\omr(t')\bigr\|_{L^2}^{2} dt'
+C\int_0^t\|v^3(t')\|_{\cB_{q_1,q_2,r}^{\mu,p}}^p
\bigl\|\omr(t')\bigr\|_{L^2}^{2}dt',
\end{equation}

\medbreak The estimate of~$F_3(t)$ uses \eqref{a.1wrt} and
Proposition  \refer{S4prop1} with~$f=\partial_3^2 v^3$,~$a=v^3$:
\begin{equation*}\begin{split}
|F_3(t)|&=\Bigl|-\int_0^t\int_{\R^3}\bigl(\pa_2 v^3(t')\cdot\pa_1\Laplacianh\pa_3^2 v^3(t')
-\pa_1 v^3(t')\cdot\pa_2\Laplacianh\pa_3^2 v^3(t')\bigr)\omro(t')dxdt'\Bigr|\\
&\lesssim\int_0^t\|\pa_3^2 v^3(t')\|_{\htr}\|v^3(t')\|_{\cB_{q_1,q_2,r}^{\mu,p}}\bigl\|\omr(t')\bigr\|_{\dH^{\frac{(p-2)r'}{2p}}}^{\frac{2}{r'}}dt'\\
&\lesssim\int_0^t\|\pa_3^2 v^3\|_{\htr}\|v^3\|_{\cB_{q_1,q_2,r}^{\mu,p}}^{p\alpha(r)}
\bigl(\|v^3\|_{SC}^p \bigl\|\omr\|_{L^2}^2\bigr)^{\frac1p-\alpha(r)}
\bigl\|\nabla\omr\bigr\|_{L^2}^{2\bigl(\frac12-\frac1p\bigr)}dt'.
\end{split}\end{equation*}
Applying H\"{o}lder's inequality and then Young's inequality leads to
\begin{equation}\begin{split}\label{F3}
|F_3(t)|&\leq C\Bigl(\int_0^t\|\pa_3^2 v^3\|_{\htr}^2dt'\Bigr)^{\frac12}
\Bigl(\int_0^t\|v^3\|_{\cB_{q_1,q_2,r}^{\mu,p}}^{p}dt'\Bigr)^{\alpha(r)}\\
&\qquad\qquad\times\Bigl(\int_0^t\|v^3\|_{\cB_{q_1,q_2,r}^{\mu,p}}^p \bigl\|\omr\|_{L^2}^2dt'\Bigr)^{\frac1p-\alpha(r)}
\Bigl(\int_0^t\bigl\|\nabla\omr\bigr\|_{L^2}^{2}dt'\Bigr)^{\frac12-\frac1p}\\
&\leq \frac{r-1}{r^2}\int_0^t\bigl\|\nabla\omr\bigr\|_{L^2}^{2}dt'
+C\int_0^t\|v^3\|_{\cB_{q_1,q_2,r}^{\mu,p}}^p \bigl\|\omr\|_{L^2}^2dt'\\
&\qquad\qquad+C\Bigl(\int_0^t\|\pa_3^2 v^3\|_{\htr}^2dt'\Bigr)^{\frac{r}2}
\Bigl(\int_0^t\|v^3\|_{\cB_{q_1,q_2,r}^{\mu,p}}^{p}dt'\Bigr)^{1-\frac{r}2}.
\end{split}\end{equation}

Substituting the estimates \eqref{F1}, \eqref{F2} and \eqref{F3} into \eqref{estimateprop1}, we obtain
\begin{equation}\begin{split}
\frac 1r \bigl\|& \omega_{\frac{r}2}(t) \bigr\|_{L^2}^2 +
\frac{r-1}{r^2}\int_0^t \bigl\|\nabla
\omega_{\frac{r}2}(t')\bigr\|_{L^2}^2dt' \leq
\frac1r\bigl\||\omega_0|^{\frac{r}2}\bigr\|_{L^2}^2 \\
&+C\int_0^t\|v^3\|_{SC}^p\bigl\|\omega_{\frac{r}2}\bigr\|_{L^2}^2 dt'+C\Bigl(\int_0^t\|\pa_3^2 v^3\|_{\htr}^2dt'\Bigr)^{\frac{r}2}
\Bigl(\int_0^t\|v^3\|_{\cB_{q_1,q_2,r}^{\mu,p}}^{p}dt'\Bigr)^{1-\frac{r}2}.
\end{split}\end{equation}
Then using Gronwall's inequality and the elementary inequality that $x^{1-\frac{r}{2}} e^{Cx}\lesssim e^{C'x}$
for some constant $C'>C$ and any $x\geq0$ yields \eqref{prop1},
which is the desired result.

%%%%%%
%%%%
%%%%%%
%%%%%%

\setcounter{equation}{0}
\section{Proof of the  estimate for $\p3^2 v^3$}
\label{Sect4}

In this section, we shall present the proof of Proposition
\ref{prop2.2}.  Recall the  $\pa_3v^3$ equation of $(\wt{NS})$ that
\beq \label{S5eq0}
\partial_t \partial_3v^3 +v\cdot\nabla\partial_3v^3-\D\partial_3v^3+\partial_3v\cdot\nabla v^3
=-\partial_3^2\D^{-1} \Bigl(\ds\sum_{\ell,m=1}^3 \partial_\ell
v^m\partial_mv^\ell\Bigr).
\eeq
Let $\htr$ be given by
Definition\refer{defhtr}. Taking $\cH^{\theta,r}$ inner
product of the \eqref{S5eq0}  with $\pa_3v^3,$ gives
\begin{equation}\begin{split} \label{5.1}
\f12\f{d}{dt}\|\pa_3v^3(t)\|&_{\cH^{\theta,r}}^2+\|\na\pa_3v^3(t)\|_{\cH^{\theta,r}}^2 =-\sum_{n=1}^3 \bigl(Q_n(v,v) \, |\, \pa_3v^3\bigr)_{\cH^{\theta,r}}\with\\
 Q_1(v,v) & \eqdefa \bigl(\Id+\partial_3^2\Delta^{-1} \bigr)(\partial_3v^3)^2 +\partial_3^2\Delta^{-1}
  \biggl(\sum_{\ell,m=1}^2\partial_\ell v^m\partial_m  v^\ell\biggr) \,,\\
Q_2(v,v) \eqdefa  \bigl(& \Id+2\partial_3^2\Delta^{-1} \bigr)\biggl(\sum_{\ell=1}^2 \partial_3 v^\ell
 \partial_\ell v^3\biggr)\andf
 Q_3(v,v) \eqdefa v\cdot \nabla\partial_3 v^3.
 \end{split}\end{equation}

 \no$\bullet$ \underline{The estimate of $\bigl(Q_1(v,v) \, |\, \pa_3v^3\bigr)_{\cH^{\theta,r}}$}

The estimate of this term relies on the following lemma:
 \begin{lem}
\label{lem5.1}
{\sl Let~$L(D)$ be an $L^q$ bounded Fourier multiplier for any $q\in]1,\infty[$.
Let $r\in\bigl]3/2,2\bigr[,~\theta\in]0,\alpha(r)[,~p\in]4,\infty[$,
and $s_1,~s_2\in ]1,\infty[$ satisfy
\begin{equation}\label{lem5.1.2}
\frac{2}{s_1}+\frac1{s_2}=\frac1{p'}+3\al(r) \andf \th<\frac1{s_2}<\frac1{p'}-3\al(r)+\th.
\end{equation}
Then we have
\begin{equation}\begin{split}\label{lem5.1.3}
 \bigl|\bigl(L(D) &(fg) \,|\,\partial_3v^3\bigr)_{\cH^{\theta,r}}\bigr|\lesssim\|f\|_{\htrps}\|g\|_{\htrps}\cdot\|v^3\|_{\Bs},
\end{split}\end{equation}
where we denote $\|f\|_{\htrps}\eqdefa \|f\|_{\dH^{\theta,\frac1{p'}-3\al(r)-\theta}}+
\|f\|_{\dH^{\frac1{p'}-\frac1{s_2}-3\alpha(r)+\theta,\frac1{s_2}-\theta}}$.
} \end{lem}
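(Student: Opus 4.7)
The estimate is trilinear in $f$, $g$, and $\pa_3 v^3$, and the natural tool is an anisotropic Bony decomposition of the product $fg$ in the horizontal variable, combined with H\"older's and Bernstein's inequalities in the anisotropic form $L^{q_1}_\h L^{q_2}_\v$. After removing $L(D)$ by its $L^q$-boundedness (valid for each $q\in]1,\infty[$), I decompose
$$
fg = \Th(f,g) + \Rh(f,g) + \Th(g,f)
$$
and estimate the pairing of each piece with $\pa_3 v^3$ in the negative-regularity space $\cH^{\th,r}=\dH^{-3\al(r)+\th,-\th}$ by rewriting the inner product as an anisotropic Fourier duality.

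For the paraproduct $\Th(f,g) = \sum_k \Shk f\,\dhk g$, each summand is horizontally Fourier-localized at scale $2^k$, so only the spectrally compatible block $\widetilde{\Delta}^\h_k\pa_3 v^3$ survives in the dyadic sum. H\"older's inequality in $L^{q_1}_\h L^{q_2}_\v$, combined with horizontal and vertical Bernstein estimates, then distributes the regularity among three factors: one copy of $\|\cdot\|_{\htrps}$, via its component $\dH^{\th,\f1{p'}-3\al(r)-\th}$, handles the low-frequency factor $\Shk f$; the second component $\dH^{\f1{p'}-\f1{s_2}-3\al(r)+\th,\f1{s_2}-\th}$ handles the high-frequency factor $\dhk g$; and the $\pa_3$ on $v^3$ is absorbed by a single vertical Bernstein step against the high vertical regularity $\f1p+3\al(r)$ built into $\Bs$. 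The symmetric paraproduct $\Th(g,f)$ is handled by swapping the roles of $f$ and $g$.

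For the remainder $\Rh(f,g) = \sum_k \dhk f\,\widetilde{\Delta}^\h_k g$ the product has horizontal frequency $\leq 2^{k+N}$, so in the pairing only $S^\h_{k+N}\pa_3 v^3$-type pieces contribute; the horizontal dyadic sum is then closed using the $\dB^0_{s_1,\infty}$ factor of $\Bs$ together with $s_1\in ]1,\infty[$. The vertical direction is treated exactly as in the paraproduct case.

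The core technicality is exponent bookkeeping. The identity $\f2{s_1}+\f1{s_2} = \f1{p'}+3\al(r)$ is precisely what makes the horizontal Bernstein losses match the total horizontal regularity available from the two $\htrps$ factors together with the horizontal integrability $s_1$ provided by $v^3$, while the strict inequalities $\th<\f1{s_2}<\f1{p'}-3\al(r)+\th$ ensure that both components of $\|\cdot\|_{\htrps}$ carry strictly positive indices in both variables, so that Bernstein applies and the dyadic series are summable. The main obstacle I anticipate is precisely this combinatorial bookkeeping: one must verify, piece by piece in the Bony decomposition, that the chosen distribution of horizontal and vertical regularities among the three factors balances both the scaling and the $\ell^2$/$\ell^\infty$ summability; once this is done, \eqref{lem5.1.3} follows by summation of geometric series.
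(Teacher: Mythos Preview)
Your overall strategy is right in spirit, but the paper's proof differs in two places that matter. First, rather than working with the trilinear pairing directly, the paper immediately applies H\"older at each block $\dhk\dvl$ to reduce to the bilinear product estimate
\[
\|fg\|_{\bigl(\dB^{-6\al(r)+2\th}_{s_1',1}\bigr)_{\rm{h}}\bigl(\dB^{\f1{p'}-3\al(r)-2\th}_{s_2',1}\bigr)_{\rm{v}}}\lesssim\|f\|_{\htrps}\|g\|_{\htrps},
\]
which cleanly separates the role of $v^3$ (now captured entirely by $\|v^3\|_{\Bs}$ via duality) from the product estimate. Second, and more importantly, the paper does not decompose only in the horizontal variable: it uses the \emph{full} anisotropic Bony decomposition
\[
fg=\bigl(T^\h+R^\h+\bar{T}^\h\bigr)\bigl(T^\v+R^\v+\bar{T}^\v\bigr)(f,g),
\]
nine pieces in all, and estimates each one separately using Bernstein in both directions. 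Your proposal decomposes only horizontally and then says the vertical direction is handled ``by vertical Bernstein estimates''; but vertical Bernstein requires vertical spectral localization of $f$ and $g$ individually, which you do not have without a vertical Bony decomposition. This is precisely where the two components of $\|\cdot\|_{\htrps}$ come from: in $(T^\h+\bar T^\h)T^\v$ for instance, the low-vertical-frequency factor is controlled via $\dH^{\f1{p'}-\f1{s_2}-3\al(r)+\th,\f1{s_2}-\th}$ (using \eqref{S5eq2}) while the high-vertical-frequency factor uses $\dH^{\th,\f1{p'}-3\al(r)-\th}$ (using \eqref{S5eq3}), and these roles are swapped in $(T^\h+\bar T^\h)\bar T^\v$. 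So your plan is not wrong, but the step ``the vertical direction is treated exactly as in the paraproduct case'' hides exactly the work the paper makes explicit; you should carry out the vertical decomposition as well.
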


\begin{proof}
Recall that   $\htr=\bigl(\dB^{-3\al(r)+\th}_{2,2}\bigr)_{\rm{h}}\bigl(\dB^{-\th}_{2,2}\bigr)_{\rm{v}}$,
we write
$$
\bigl(L(D) (fg) \,\big|\,\partial_3v^3\bigr)_{\cH^{\theta,r}}=
\sum\limits_{k,\ell\in\Z}2^{2k(-3\al(r)+\th)}2^{-2\ell\th}\Bigl(\dhk\dvl\bigl(L(D) (fg)\bigr)\,\Big|\,\dhk\dvl\pa_3 v^3\Bigr)_{L^2}.
$$
Applying Lemma \ref{lemBern} yields
\begin{equation*}\begin{split}
\bigl|\bigl(L(D)(fg) \,|\,\partial_3v^3\bigr)_{\cH^{\theta,r}}\bigr|
&\lesssim\sum\limits_{k,\ell\in\Z}2^{2k(-3\al(r)+\th)}2^{-2\ell\th}\|\dhk\dvl(fg)\|
_{L_{\rm{h}}^{\frac{s_1}{s_1-1}}L_{\rm{v}}^{\frac{s_2}{s_2-1}}}
2^\ell\|\dhk\dvl v^3\|_{L_{\rm{h}}^{s_1}L_{\rm{v}}^{s_2}}\\
&\lesssim \|fg\|_{\Bsfg}\|v^3\|_{\Bs}.
\end{split}\end{equation*}
So that it remains to verify
\begin{equation}\label{lem5.1.4}
\|fg\|_{\Bsfg}\lesssim\|f\|_{\htrps}\|g\|_{\htrps}.
\end{equation}
In order to do so, we get, applying Bony's decomposition in both horizontal and vertical variables, that
\beq\label{S5eq1}
fg=\left(T^\h+R^\h+\bar{T}^\h\right)\left(T^\v+R^\v+\bar{T}^\v\right)(f,g).
\eeq
We first get, by applying Lemma \ref{lemBern} and \eqref{lem5.1.2}, that
\beq\label{S5eq2}
\begin{split}
\|\D_{k}^\h S_{\ell-1}^\v f\|_{L^2_\h(L^{\f{2s_2}{s_2-2}}_\v)}\lesssim &\sum_{\ell'\leq\ell-2}c_{k,\ell'}2^{-k\left(\f1{p'}-\f1{s_2}-3\al(r)+\th\right)}
2^{\ell'\th}\|f\|_{\dH^{\frac1{p'}-\frac1{s_2}-3\alpha(r)+\theta,\frac1{s_2}-\theta}}\\
\lesssim &c_{k,\ell}2^{-k\left(\f1{p'}-\f1{s_2}-3\al(r)+\th\right)}
2^{\ell\th}\|f\|_{\dH^{\frac1{p'}-\frac1{s_2}-3\alpha(r)+\theta,\frac1{s_2}-\theta}},
\end{split}
\eeq
and
\beq\label{S5eq3}
\begin{split}
\|S_{k-1}^\h \D_{\ell}^\v f\|_{L^{\f{2s_1}{s_1-2}}_\h(L^2_\v)}\lesssim &\sum_{k'\leq k-2}c_{k',\ell}2^{k'\left(\f2{s_1}-\th\right)}
2^{-\ell\left(\f1{p'}-3\al(r)-\th\right)}\|f\|_{\dH^{\th,\frac1{p'}-3\alpha(r)-\theta}}\\
\lesssim &c_{k,\ell}2^{k\left(\f2{s_1}-\th\right)}2^{-\ell\left(\f1{p'}-3\al(r)-\th\right)}
\|f\|_{\dH^{\th,\frac1{p'}-3\alpha(r)-\theta}}.
\end{split}
\eeq
And applying Lemma \ref{lemBern} and \eqref{S5eq2} gives rise to
\ben\nonumber
\|S_{k-1}^\h S_{\ell-1}^\v f\|_{L^{\f{2s_1}{s_1-2}}_\h
(L^{\f{2s_2}{s_2-2}}_\v)}&\lesssim &\sum_{k'\leq k-2}2^{\f{2k'}{s_1}}\|\D_{k'}^\h S_{\ell-1}^\v f\|_{L^2_\h(L^{\f{2s_2}{s_2-2}}_\v)}\\
&\lesssim &\sum_{k'\leq k-2}c_{k',\ell}2^{k'\left(\f2{s_1}+\f1{s_2}-\f1{p'}+3\al(r)-\th\right)}
2^{\ell\th}\|f\|_{\dH^{\frac1{p'}-\frac1{s_2}-3\alpha(r)+\theta,\frac1{s_2}-\theta}} \label{S5eq4}\\
&\lesssim &c_{k,\ell}2^{k(6\al(r)-\th)}2^{\ell\th}\|f\|_{\dH^{\frac1{p'}-\frac1{s_2}-3\alpha(r)+\theta,\frac1{s_2}-\theta}}.\nonumber
\een
Considering the support to the Fourier transform of the terms in $T^\h T^\v(f,g),$ we have
\beno
\begin{split}
\|\D_k^\h\D_\ell^\v \bigl(T^\h+\bar{T}^\h\bigr) T^\v(f,g)\|_{L^{s_1'}_\h(L^{s_2'}_\v)}\lesssim & \sum_{\substack{|k'-k|\leq 4\\|\ell'-\ell|\leq 4}}\Bigl(\|S_{k'-1}^\h S_{\ell'-1}^\v f\|_{L^{\f{2s_1}{s_1-2}}_\h
(L^{\f{2s_2}{s_2-2}}_\v)}\|\D_{k'}^\h\D_{\ell'}^\v g\|_{L^2}\\
&\qquad\quad+\|\D_{k'}^\h S_{\ell'-1}^\v f\|_{L^2_\h(L^{\f{2s_2}{s_2-2}}_\v)}\|S_{k'-1}^\h\D_{\ell'}^\v g\|_{L^{\f{2s_1}{s_1-2}}_\h(L^2_\v)}\Bigr)\\
\lesssim & d_{k,\ell}2^{2k(3\al(r)-\th)}2^{-\ell\left(\f1{p'}-3\al(r)-2\th\right)}\\
&\qquad\times\|f\|_{\dH^{\frac1{p'}-\frac1{s_2}-3\alpha(r)+\theta,\frac1{s_2}-\theta}}\|g\|_{\dH^{\th,\frac1{p'}-3\alpha(r)-\theta}}.
\end{split}
\eeno
By symmetry, we obtain
 \beno
\begin{split}
\|\D_k^\h\D_\ell^\v \bigl(T^\h+\bar{T}^\h\bigr) \bar{T}^\v(f,g)\|_{L^{s_1'}_\h(L^{s_2'}_\v)}
\lesssim & d_{k,\ell}2^{2k(3\al(r)-\th)}2^{-\ell\left(\f1{p'}-3\al(r)-2\th\right)}\\
&\qquad\times\|f\|_{\dH^{\th,\frac1{p'}-3\alpha(r)-\theta}}\|g\|_{\dH^{\frac1{p'}-\frac1{s_2}-3\alpha(r)+\theta,\frac1{s_2}-\theta}}.
\end{split}
\eeno
While we deduce from Lemma \ref{lemBern} that
\beno
\begin{split}
\|\D_k^\h\D_\ell^\v R^\h T^\v(f,g)\|_{L^{s_1'}_\h(L^{s_2'}_\v)}\lesssim &2^{\f{2k}{s_1}}\sum_{\substack{k'\geq k-3\\|\ell'-\ell|\leq 4}}
\|\D_{k'}^\h S_{\ell'-1}^\v f\|_{L^2_\h(L^{\f{2s_2}{s_2-2}}_\v)}\|\wt{\D}_{k'}^\h\D_{\ell'}^\v g\|_{L^2},
\end{split}
\eeno
from which, \ref{lem5.1.2} and \eqref{S5eq2}, we deduce that
\beno
\begin{split}
\|\D_k^\h\D_\ell^\v R^\h T^\v(f,g)\|_{L^{s_1'}_\h(L^{s_2'}_\v)}\lesssim &2^{\f{2k}{s_1}}\sum_{\substack{k'\geq k-3\\|\ell'-\ell|\leq 4}}
d_{k',\ell'}2^{-k'\left(\f1{p'}-\f1{s_2}-3\al(r)+2\th\right)}2^{-\ell'\left(\f1{p'}-3\al(r)-2\th\right)}\\
&\qquad\times \|f\|_{\dH^{\frac1{p'}-\frac1{s_2}-3\alpha(r)+\theta,\frac1{s_2}-\theta}}\|g\|_{\dH^{\th,\frac1{p'}-3\alpha(r)-\theta}}\\
\lesssim & d_{k,\ell}2^{2k(3\al(r)-\th)}2^{-\ell\left(\f1{p'}-3\al(r)-2\th\right)}\|f\|_{\htrps}\|g\|_{\htrps}.
\end{split}
\eeno
By symmetry, the term $\|\D_k^\h\D_\ell^\v R^\h \bar{T}^\v(f,g)\|_{L^{s_1'}_\h(L^{s_2'}_\v)}$ shares the above estimate.

Again we deduce from Lemma \ref{lemBern} that
\beno
\begin{split}
\|\D_k^\h\D_\ell^\v T^\h R^\v(f,g)\|_{L^{s_1'}_\h(L^{s_2'}_\v)}\lesssim &2^{\f{\ell}{s_2}}\sum_{\substack{|k'- k|\leq 4\\\ell'\geq\ell- 3}}
\|S_{k'-1}^\h \D_{\ell'}^\v f\|_{L^{\f{2s_1}{s_1-2}}_\h(L^2_\v)}\|{\D}_{k'}^\h\wt{\D}_{\ell'}^\v g\|_{L^2},
\end{split}
\eeno
which together with \eqref{lem5.1.2} and \eqref{S5eq3} ensures that
\beno
\begin{split}
\|\D_k^\h\D_\ell^\v T^\h R^\v(f,g)\|_{L^{s_1'}_\h(L^{s_2'}_\v)}\lesssim &2^{\f{\ell}{s_2}}\sum_{\substack{|k'- k|\leq 4\\\ell'\geq\ell- 3}}
d_{k',\ell'}2^{-k\left(\f1{p'}-\f2{s_1}-\f1{s_2}-3\al(r)+2\th\right)}2^{-\ell\left(\f1{p'}+\f1{s_2}-3\al(r)-\th\right)}\\
&\qquad\times \|f\|_{\dH^{\th,\frac1{p'}-3\alpha(r)-\theta}}
\|g\|_{\dH^{\frac1{p'}-\frac1{s_2}-3\alpha(r)+\theta,\frac1{s_2}-\theta}}\\
\lesssim & d_{k,\ell}2^{2k(3\al(r)-\th)}2^{-\ell\left(\f1{p'}-3\al(r)-2\th\right)}\|f\|_{\htrps}\|g\|_{\htrps}.
\end{split}
\eeno
By symmetry, the same estimate holds for $\D_k^\h\D_\ell^\v \bar{T}^\h R^\v(f,g).$

Finally, we get, by applying Lemma \ref{lemBern} and \eqref{lem5.1.2}, that
\beno
\begin{split}
\|\D_k^\h\D_\ell^\v R^\h R^\v(f,g)\|_{L^{s_1'}_\h(L^{s_2'}_\v)}\lesssim &2^{\f{2k}{s_2}} 2^{\f{\ell}{s_2}}\sum_{\substack{k'\geq k-3\\\ell'\geq\ell- 3}}
\|\D_{k'}^\h \D_{\ell'}^\v f\|_{L^2}\|\wt{\D}_{k'}^\h\wt{\D}_{\ell'}^\v g\|_{L^2}\\
\lesssim &2^{\f{2k}{s_2}} 2^{\f{\ell}{s_2}}\sum_{\substack{k'\geq k-3\\\ell'\geq\ell- 3}}d_{k',\ell'}2^{-k\left(\f1{p'}-\f1{s_2}-3\al(r)+2\th\right)}2^{-\ell\left(\f1{p'}+\f1{s_2}-3\al(r)-2\th\right)}\\
&\qquad\times \|f\|_{\dH^{\th,\frac1{p'}-3\alpha(r)-\theta}}
\|g\|_{\dH^{\frac1{p'}-\frac1{s_2}-3\alpha(r)+\theta,\frac1{s_2}-\theta}}\\
\lesssim & d_{k,\ell}2^{2k(3\al(r)-\th)}2^{-\ell\left(\f1{p'}-3\al(r)-2\th\right)}\|f\|_{\htrps}\|g\|_{\htrps}.
\end{split}
\eeno

By summing up the above estimates, we obtain \eqref{lem5.1.4}, and thus the lemma.
\end{proof}

Applying Lemma \ref{lem5.1}
with $f$ and $g$ being of the forms $\ph\vhcurl,~\ph\vhdiv$ or $\p3 v^3$ gives:
\begin{equation}\label{5.21}
\bigl|\bigl(Q_1(v,v)|\p3 v^3\bigr)_{\htr}\bigr|\lesssim\|v^3\|_{\Bs}
\bigl(\|\omega\|_{\htrps}^2
+\|\p3 v^3\|_{\htrps}^2\bigr).
\end{equation}
Due to $s_2$ satisfying \eqref{lem5.1.2}, we get, by
applying Lemma \ref{lem4.3ofCZ5} and \ref{BiotSavartomega}, that
\begin{equation}\label{5.22}
\|\omega\|_{\htrps}\lesssim\|\omega\|_{\dH^{\frac1{p'}-3\alpha(r)}}
\lesssim\bigl\|\omr\bigr\|_{L^2}^{2\alpha(r)+\frac1p}\bigl\|\nabla\omr\bigr\|_{L^2}^{1-\frac1{p}}.
\end{equation}
While for any function $a$, it follows from Definition \ref{defhtr} that
\begin{align*}
\|a\|_{\dH^{\theta,\frac1{p'}-3\al(r)-\theta}}^2
&=\int_{\R^3}\bigl|\widehat{a}(\xi)\bigr|^{2}\bigl(|\xih|^{6\al(r)}
|\xi_3|^{2(\frac1{p'}-3\al(r))}\bigr)
\cdot|\xih|^{2(-3\alpha(r)+\theta)}|\xi_3|^{-2\theta}\,d\xi\\
&\leq \int_{\R^3}\bigl|\widehat{a}(\xi)\bigr|^{\frac2p}\bigl(
|\xi|^2\bigl|\widehat{a}(\xi)\bigr|^2\bigr)^{\frac1{p'}}
\cdot|\xih|^{2(-3\alpha(r)+\theta)}|\xi_3|^{-2\theta}\,d\xi,
\end{align*}
and
similarly due to $\f2{s_1}+\f1{s_2}=\f1{p'}+3\al(r),$ we have
\begin{align*}
\|a\|_{\dH^{\f2{s_1}-6\al(r)+\theta, \f1{s_2}-\th}}^2
&=\int_{\R^3}\bigl|\widehat{a}(\xi)\bigr|^{2}\bigl(|\xih|^{2\left(\f1{s_1}-3\al(r)\right)}
|\xi_3|^{\frac2{s_2}}\bigr)
\cdot|\xih|^{2(-3\alpha(r)+\theta)}|\xi_3|^{-2\theta}\,d\xi\\
&\leq \int_{\R^3}\bigl|\widehat{a}(\xi)\bigr|^{\frac2p}\bigl(
|\xi|^2\bigl|\widehat{a}(\xi)\bigr|^2\bigr)^{\frac1{p'}}
\cdot|\xih|^{2(-3\alpha(r)+\theta)}|\xi_3|^{-2\theta}\,d\xi.
\end{align*}
Applying H\"older's inequality with measure $|\xih|^{2(-3\alpha(r)+\theta)}|\xi_3|^{-2\theta}\,d\xi$ gives
\beno
\begin{split}
\|a\|_{\dH^{\theta,\frac1{p'}-3\al(r)-\theta}}+\|a\|_{\dH^{\f2{s_1}-6\al(r)+\theta, \f1{s_2}-\th}}\lesssim \|a\|_{\htr}^{\frac1p}\|\nabla a\|_{\htr}^{1-\frac1{p}}.
\end{split}
\eeno
As a result, it comes out
\begin{equation}\label{5.23}
\|\p3 v^3\|_{\htrps}
\leq \|\p3 v^3\|_{\htr}^{\frac1p}\|\nabla\p3 v^3\|_{\htr}^{1-\frac1{p}}.
\end{equation}
Substituting \eqref{5.22},~\eqref{5.23} into \eqref{5.21}, and using Young's inequality, we obtain
\begin{equation}\begin{split}\label{5.25}
\bigl|\bigl(Q_1(v,v)|\p3 v^3\bigr)_{\htr}\bigr|
\leq \frac16&\|\nabla\p3 v^3\|_{\htr}^2+C\|v^3\|_{\Bs}^p\|\p3 v^3\|_{\htr}^2\\
&+C\|v^3\|_{\Bs}\bigl\|\omr\bigr\|_{L^2}^{2\bigl(2\alpha(r)+\frac1p\bigr)}
\bigl\|\nabla\omr\bigr\|_{L^2}^{2\bigl(1-\frac1p\bigr)},
\end{split}\end{equation}
with $s_1,s_2$ satisfying \eqref{lem5.1.2}.

\no$\bullet$ \underline{The estimate of $\bigl|\bigl(Q_2(v,v) \, |\, \pa_3v^3\bigr)_{\cH^{\theta,r}}\bigr|$}

We first get, by applying Bony's decomposition, that
\beno
\p3v^\h\cdot\na_\h v^3=\bigl(T^\h+R^\h+\bar{T}^\h\bigr)\bigl(T^\v+R^\v+\bar{T}^\v\bigr)(\p3v^\h, \na_\h v^3).
\eeno
Applying Lemma \ref{lemBern} gives
\beno
\begin{split}
\|S_{k-1}^\h\D_\ell^\v\p3 v^\h\|_{L^\infty_\h(L^2_\v)}\lesssim & \sum_{k'\leq k-1}2^{k'}2^{\ell}\|\D_{k'}^\h\D_{\ell}^\v v^\h\|_{L^2}\\
\lesssim & \sum_{k'\leq k-1} c_{k',\ell}2^{k'\mu}2^{\ell\left(3\al(r)-\mu+\f2p\right)}\|v^\h\|_{\dH^{1-\mu, 1+\mu-3\al(r)-\f2p}}\\
\lesssim & c_{k,\ell}2^{k\mu}2^{\ell\left(3\al(r)-\mu+\f2p\right)}\|v^\h\|_{\dH^{1-\mu, 1+\mu-3\al(r)-\f2p}}.
\end{split}
\eeno
 Using this and Lemma \ref{minusBesov}, we obtain
 \beno
 \begin{split}
 \bigl\|\dhk&\dvl\Th\Tvb(\pa_3 \vh,\na_\h v^3)\bigr\|_{L^2}\\
 \lesssim & \sum_{\substack{|k'-k|\leq 4\\|\ell'-\ell|\leq 4}}
\|S_{k'-1}^\h \D_{\ell'}^\v\p3 v^\h\|_{L^\infty_\h(L^2_\v)}2^{k'}\|\D_{k'}^\h S_{\ell'-1}^\v v^3\|_{L^2_\h(L^\infty_\v)}\\
 \lesssim & c_{k,\ell}2^{k\left(1-\f2p\right)}2^{\ell\left(3\al(r)+\f2p\right)}\|v^3\|_{\bigl(\dB^{\frac2p+\mu}_{2,\infty}\bigr)_{\h}\bigl(\dB^{-\mu}_{\infty,\infty}\bigr)_{\v}}
\|v^\h\|_{\dH^{1-\mu,1+\mu-3\alpha(r)-\frac2p}}
 \end{split}
 \eeno
 While we again deduce from Lemma \ref{minusBesov} and Lemma \ref{lemBern} that
 \beno
 \begin{split}
 \bigl\|\dhk\dvl R^\h\Tvb(&\pa_3 \vh,\na_\h v^3)\bigr\|_{L^2}\lesssim  2^k\sum_{\substack{k'\geq k-3\\|\ell'-\ell|\leq 4}}
 \|\D_{k'}^\h \D_{\ell'}^\v\p3 v^\h\|_{L^2}2^{k'}\|\wt{\D}_{k'}^\h S_{\ell'-1}^\v v^3\|_{L^2_\h(L^\infty_\v)}\\
 \lesssim & 2^k\sum_{\substack{k'\geq k-3\\|\ell'-\ell|\leq 4}}c_{k',\ell'}2^{-k'\th}2^{\ell'(3\al(r)+\th)}\|v^3\|_{\bigl(\dB^{\frac2p+\mu}_{2,\infty}\bigr)_{\h}\bigl(\dB^{-\mu}_{\infty,\infty}\bigr)_{\v}}
\|v^\h\|_{\dH^{1+\th-\mu-\frac2p,1+\mu-3\alpha(r)-\th}}\\
\lesssim &c_{k,\ell}2^{k(1-\th)}2^{\ell(3\al(r)+\th)}\|v^3\|_{\bigl(\dB^{\frac2p+\mu}_{2,\infty}\bigr)_{\h}\bigl(\dB^{-\mu}_{\infty,\infty}\bigr)_{\v}}
\|v^\h\|_{\dH^{1+\th-\mu-\frac2p,1+\mu-3\alpha(r)-\th}}.
\end{split}
 \eeno
 This shows that
 \begin{equation}\begin{split}\label{5.28}
&\bigl\|\Th\Tvb(\pa_3 v^\h,\na_\h v^3)\bigr\|_{\dH^{-1+\frac2{p},-3\alpha(r)-\frac2p}}
+\bigl\|\Rh \Tvb(\pa_3 v^\h,\na_\h v^3)\bigr\|_{\dH^{-1+\th,-3\alpha(r)-\th}}\\
&\qquad\qquad\lesssim\|v^3\|_{\bigl(\dB^{\frac2p+\mu}_{2,\infty}\bigr)_{\h}\bigl(\dB^{-\mu}_{\infty,\infty}\bigr)_{\v}}
\bigl(\|v^\h\|_{\dH^{1-\mu,1+\mu-3\alpha(r)-\frac2p}}
+\|v^\h\|_{\dH^{1+\th-\mu-\frac2p,1+\mu-3\alpha(r)-\th}}\bigr).
\end{split}\end{equation}
While note that
\beno
\bigl\|\dhk\D_\ell^v\Thb\Tvb(\pa_3 v^\h,\na_\h v^3)\bigr\|_{L^2}\lesssim \sum_{\substack{|k'-k|\leq 4\\|\ell'-\ell|\leq 4}}
\|\D_{k'}^\h\D_\ell^\v\p3 v^\h\|_{L^2}\|S_{k'-1}^\h S_{\ell'-1}^\v \na_\h v^3\|_{L^\infty},
\eeno
yet it follows from Lemma \ref{lemBern} that
\beno
\begin{split}
\|S_{k-1}^\h S_{\ell-1}^\v \na_\h v^3\|_{L^\infty}\lesssim &
\sum_{\substack{k'\leq k-2\\ \ell'\leq \ell-2}}2^{k'\left(\f53-\f4{3p}\right)}2^{\ell'\left(\f13-\f2{3p}\right)}\|\D_{k'}^\h\D_{\ell'}^\v v^3\|_{L^{\f{3p}{p-2}}}\\
\lesssim &
2^{k\left(\f53-\f4{3p}\right)}2^{\ell\left(\f13-\f2{3p}\right)}\|\D_{k'}^\h\D_{\ell'}^\v v^3\|_{L^{\f{3p}{p-2}}}.
\end{split}
\eeno
As a result, it comes out
\beno
\bigl\|\dhk\D_\ell^v\Thb\Tvb(\pa_3 v^\h,\na_\h v^3)\bigr\|_{L^2}\lesssim c_{k,\ell}2^{k\left(\f23-\f4{3p}\right)}
2^{\ell\left(3\al(r)+\f13+\f4{3p}\right)}\|v^\h\|_{\dH^{1,1-3\alpha(r)-\frac2p}}
\|v^3\|_{L^{\frac{3p}{p-2}}},
\eeno
and hence
\begin{equation}\begin{split}\label{5.26}
\bigl\|\Thb\Tvb(\pa_3 v^\h,\na_\h v^3)\bigr\|_{\dH^{-\frac23+\frac4{3p},-\frac13-\frac4{3p}-3\alpha(r)}}
\lesssim\|v^\h\|_{\dH^{1,1-3\alpha(r)-\frac2p}}
\|v^3\|_{L^{\frac{3p}{p-2}}}.
\end{split}\end{equation}

Observing that $\p3$ is applied on the low-frequency part in $\Tv(\pa_3 v^\h,\na_\h v^3)$,
but on the high-frequency part in $\Tvb(\pa_3 v^\h,\na_\h v^3)$, hence naturally we believe that
the estimates \eqref{5.28},~\eqref{5.26} still hold when $\Tvb$ is replaced by $\Tv$.
Indeed, exactly along the same line of the proof of these two estimates, we can verify
\begin{equation}\begin{split}
&\bigl\|\Th\Tv(\pa_3 v^\h,\na_\h v^3)\bigr\|_{\dH^{-1+\frac2{p},-3\alpha(r)-\frac2p}}
+\bigl\|\Rh \Tv(\pa_3 v^\h,\na_\h v^3)\bigr\|_{\dH^{-1+\th,-3\alpha(r)-\th}}\\
&\qquad\qquad\lesssim\|v^3\|_{\bigl(\dB^{\frac2p+\mu}_{2,\infty}\bigr)_{\h}\bigl(\dB^{-\mu}_{\infty,\infty}\bigr)_{\v}}
\bigl(\|v^\h\|_{\dH^{1-\mu,1+\mu-3\alpha(r)-\frac2p}}
+\|v^\h\|_{\dH^{1+\th-\mu-\frac2p,1+\mu-3\alpha(r)-\th}}\bigr).
\end{split}\end{equation}
and
\begin{equation}\begin{split}\label{5.27}
\bigl\|\Thb\Tv(\pa_3 v^\h,\na_\h v^3)\bigr\|_{\dH^{-\frac23+\frac4{3p},-\frac13-\frac4{3p}-3\alpha(r)}}
\lesssim\|v^\h\|_{\dH^{1,1-3\alpha(r)-\frac2p}}
\|v^3\|_{L^{\frac{3p}{p-2}}}.
\end{split}\end{equation}

On the other hand, by using Lemma \ref{minusBesov} and interpolation inequality, we have
\begin{align*}
\|S_{k-1}^\h&\D_\ell^\v\na_h v^3\|_{L^2}\lesssim c_{k,\ell}2^{k(\frac13+\frac{4}{3p}+6\al(r)-2\th)}
2^{-\ell(\f43+\f4{3p}+3\al(r)-2\th)}
\|v^3\|_{\dH^{\frac23-\frac{4}{3p}-6\al(r)+2\th,\f43+\f4{3p}+3\al(r)-2\th}}\\
&\lesssim c_{k,\ell}2^{k(\frac13+\frac{4}{3p}+6\al(r)-2\th)}
2^{-\ell(\f43+\f4{3p}+3\al(r)-2\th)}
\|\na_h \pa_3v^3\|_{\htr}^{\frac23-\frac{4}{3p}-3\al(r)+\th}
\|\pa_3^2 v^3\|_{\htr}^{\f13+\f4{3p}+3\al(r)-\th}\\
&\lesssim c_{k,\ell}2^{k(\frac13+\frac{4}{3p}+6\al(r)-2\th)}
2^{-\ell(\f43+\f4{3p}+3\al(r)-2\th)}
\|\na \pa_3v^3\|_{\htr}.
\end{align*}
While it is easy to observe from Lemma \ref{lemBern} that
\beno
\|\D_k^\h\D_\ell^\v \pa_3 v^\h\|_{L^2}\lesssim c_{k,\ell}2^{-k}2^{\ell(\f2{p}+3\al(r))}
\|v^\h\|_{H^{1,1-\frac2p-3\al(r)}}.
\eeno
As a consequence, we obtain
\beno
\begin{split}
\bigl\|\D_k^\h&\D_\ell^\v\Thb R^\v(\p3v^\h,\na_h v^3)\bigr\|_{L^{\f{3p}{2(p+1)}}}\\
\lesssim &2^{\f{\ell}3\left(1-\f2p\right)}
\sum_{\substack{|k'-k|\leq 4\\ \ell'\geq \ell-3}}d_{k',\ell'}2^{k'(6\al(r)-2\th)}
2^{-\ell\left(\f43-\f2{3p}-2\th\right)}\|\na\p3v^3\|_{\htr}\|v^\h\|_{\dH^{1,1-\frac2p-3\alpha(r)}}\\
\lesssim &d_{k,\ell}2^{k(6\al(r)-2\th)}2^{-\ell(1-2\th)}\|\na\p3v^3\|_{\htr}\|v^\h\|_{\dH^{1,1-\frac2p-3\alpha(r)}}.
\end{split}
\eeno
Along the same line, due to $p>4,~r>\frac95,~\th>0$, there holds
\begin{equation}\label{prcondi}
\frac23-\frac4{3p}-6\alpha(r)+2\theta>0.
\end{equation}
Then we have
\beno
\begin{split}
\bigl\|&\D_k^\h\D_\ell^\v R^\h R^\v(\p3v^\h,\na_h v^3)\bigr\|_{L^{\f{3p}{2(p+1)}}}\\
\lesssim &2^{\f{2k}3\left(1-\f2p\right)}2^{\f{\ell}3\left(1-\f2p\right)}\sum_{\substack{k'\geq k-3\\ \ell'\geq \ell-3}}\|\D_{k'}^\h\D_{\ell'}^\v\p3 v^\h\|_{L^2}
\|\wt{\D}_{k'}^\h\wt{\D}_{\ell'}^\v\na_\h v^3\|_{L^2}\\
\lesssim &2^{\f{2k}3\left(1-\f2p\right)}2^{\f{\ell}3\left(1-\f2p\right)}\sum_{\substack{k'\geq k-3\\ \ell'\geq \ell-3}}
d_{k',\ell'}2^{-k'\left(\f23-\f4{3p}-6\al(r)+2\th\right)}
2^{-\ell'\left(\f43-\f2{3p}-2\th\right)}\|\na\p3v^3\|_{\htr}\|v^\h\|_{\dH^{1,1-\frac2p-3\alpha(r)}}\\
\lesssim &d_{k,\ell}2^{k(6\al(r)-2\th)}2^{-\ell(1-2\th)}\|\na\p3v^3\|_{\htr}\|v^\h\|_{\dH^{1,1-\frac2p-3\alpha(r)}}.
\end{split}
\eeno
We thus obtain
\begin{equation}\begin{split}\label{5.29}
\bigl\|(\Thb+\Rh)\Rv(\p3 v^\h,\na_\h v^3)\bigr\|_{\dB^{-6\alpha(r)+2\theta,1-2\theta}_{\frac{3p}{2p+2},1}}
& \lesssim\|\nabla\p3 v^3\|_{\htr}\|v^\h\|_{\dH^{1,1-\frac2p-3\alpha(r)}_{2,1}}.
\end{split}\end{equation}
Similarly, by using interpolation inequality, we have
\beno
\|\D_k^\h\D_\ell^\v \nabla_\h v^3\|_{L^2}\lesssim c_{k,\ell}2^{-k\left(2\th-6\al(r)-\f2p\right)}2^{-\ell\left(1+3\al(r)-2\th+\f2{p}\right)}\|\na\p3v^3\|_{\htr}.
\eeno
Then we deduce from Lemma \ref{minusBesov} and Lemma \ref{lemBern} that
\beno
\begin{split}
\bigl\|\D_k^\h\D_\ell^\v T^\h R^\v&(\p3v^\h,\na_h v^3)\bigr\|_{L^2_\h(L^1_\v)}
\lesssim \sum_{\substack{|k'-k|\leq 4\\ \ell'\geq \ell-3}}\|S_{k'-1}^\h\D_{\ell'}^\v\p3 v^\h\|_{L^\infty_\h(L^2)}
\|\D_{k'}^\h\wt{\D}_{\ell'}^\v\na_\h v^3\|_{L^2}\\
\lesssim &\sum_{\substack{|k'-k|\leq 4\\ \ell'\geq \ell-3}}
d_{k',\ell'}2^{k'\left(\f2{p}+6\al(r)+\mu-2\th\right)}
2^{-\ell'\left(1+\mu-2\th\right)}\|\na\p3v^3\|_{\htr}\|v^\h\|_{\dH^{1-\mu,1-\frac2p-3\alpha(r)+\mu}_{2,1}}\\
\lesssim &d_{k,\ell}2^{k\left(\f2{p}+6\al(r)+\mu-2\th\right)}
2^{-\ell\left(1+\mu-2\th\right)}\|\na\p3v^3\|_{\htr}\|v^\h\|_{\dH^{1-\mu,1-\frac2p-3\alpha(r)+\mu}_{2,1}},
\end{split}
\eeno
which implies
\begin{equation}\label{5.31}
\bigl\|\Th\Rv(\p3 v^\h,\na_\h v^3)\bigr\|
_{\bigl(\dB^{-\frac2p-6\alpha(r)+2\theta-\mu}_{2,1}\bigr)_{\h}\bigl(\dB^{1-2\theta+\mu}_{1,1}\bigr)_{\v}}
\lesssim\|\nabla\p3 v^3\|_{\htr}\|v^\h\|_{\dH^{1-\mu,1-\frac2p-3\alpha(r)+\mu}_{2,1}}.
\end{equation}

Now we are in position to completes the estimate of $\bigl|\bigl(Q_2(v,v) \, |\, \pa_3v^3\bigr)_{\cH^{\theta,r}}\bigr|$. We first get, by using the estimates
\eqref{5.28}-\eqref{5.27}, that
\begin{equation}\begin{split}\label{5.35}
&\Bigl|\bigl((\Id+2\partial_3^2\Delta^{-1}) \Thb(\Tv+\Tvb)(\partial_3 v^\h, \na_\h v^3)
\big|\p3 v^3\bigr)_{\htr}\Bigr|\\
&\lesssim \bigl\|\Thb(\Tv+\Tvb)(\pa_3 v^\h,\na_\h v^3)\bigr\|_{\dH^{-\frac23+\frac4{3p},-\frac13-\frac4{3p}-3\alpha(r)}}
\|\p3 v^3\|_{\dH^{\frac23-\frac4{3p}-6\alpha(r)+2\theta,\frac13+\frac4{3p}+3\alpha(r)-2\theta}}\\
&\lesssim \|v^3\|_{L^{\frac{3p}{p-2}}}\|v^\h\|_{\dH^{1,1-\frac2p-3\alpha(r)}}
\|\nabla\p3 v^3\|_{\htr},
\end{split}\end{equation}
and
\begin{equation}\begin{split}\label{5.36}
\Bigl|\bigl((\Id+&2\partial_3^2\Delta^{-1}) (\Th+\Rh)
(\Tv+\Tvb)(\partial_3 v^\h, \na_\h v^3)
\big|\p3 v^3\bigr)_{\htr}\Bigr|\\
&\lesssim \bigl\|\Th(\Tv+\Tvb)(\pa_3 v^\h,\na_\h v^3)\bigr\|_{\dH^{-1+\frac2p,-\frac2p-3\alpha(r)}}
\|\p3 v^3\|_{\dH^{1-\frac2p-6\alpha(r)+2\theta,\frac2p+3\alpha(r)-2\theta}}\\
&\qquad+\bigl\|\Rh(\Tv+\Tvb)(\pa_3 v^\h,\na_\h v^3)\bigr\|_{\dH^{-1+\th,-\th-3\alpha(r)}}
\|\p3 v^3\|_{\dH^{1-\th-6\alpha(r)+2\theta,3\alpha(r)-\theta}}\\
&\lesssim\|v^3\|_{\bigl(\dB^{\frac2p+\mu}_{2,\infty}\bigr)_{\h}\bigl(\dB^{-\mu}_{\infty,\infty}\bigr)_{\v}}
\|\nabla\p3 v^3\|_{\htr}\\
&\qquad\qquad\qquad\qquad\qquad\times\bigl(\|v^\h\|_{\dB^{1-\mu,1-\frac2p-3\alpha(r)+\mu}_{2,1}}
+\|v^\h\|_{\dB^{1-\frac2p+\th-\mu,1-3\alpha(r)-\th+\mu}_{2,1}}\bigr).
\end{split}\end{equation}
Using \eqref{5.29} and \eqref{5.31}, we obtain
\begin{equation}\begin{split}\label{5.37}
&\Bigl|\bigl((\Id+2\partial_3^2\Delta^{-1}) (\Thb+\Rh)\Rv(\partial_3 v^\h, \na_\h v^3)
\big|\p3 v^3\bigr)_{\htr}\Bigr|\\
&\lesssim \|(\Thb+\Rh)\Rv(\p3 v^\h,\na_\h v^3)\|_{\dB^{-6\alpha(r)+2\theta,1-2\theta}
_{\frac{3p}{2p+2},1}}
\|\p3 v^3\|_{\dB^{0,-1}_{\frac{3p}{p-2},\infty}}\\
&\lesssim\|\nabla\p3 v^3\|_{\htr}\|v^\h\|_{\dB^{1,1-\frac2p-3\alpha(r)}_{2,1}}
\|v^3\|_{L^{\frac{3p}{p-2}}},
\end{split}\end{equation}
and
\begin{equation}\begin{split}\label{5.38}
&\Bigl|\bigl((\Id+2\partial_3^2\Delta^{-1}) \Th\Rv(\partial_3 v^\h, \na_\h v^3)
\big|\p3 v^3\bigr)_{\htr}\Bigr|\\
&\lesssim \|\Th\Rv(\p3 v^\h,\na_\h v^3)\|
_{\bigl(\dB^{-\frac2p-6\alpha(r)+2\theta-\mu}_{2,1}\bigr)_{\h}\bigl(\dB^{1-2\theta+\mu}_{1,1}\bigr)_{\v}}
\|\p3 v^3\|_{\bigl(\dB^{\frac2p+\mu}_{2,\infty}\bigr)_{\h}\bigl(\dB^{-1-\mu}_{\infty,\infty}\bigr)_{\v}}\\
&\lesssim\|\nabla\p3 v^3\|_{\htr}\|\vl\|_{\dB^{1-\mu,1-\frac2p-3\alpha(r)+\mu}_{2,1}}
\|v^3\|_{\bigl(\dB^{\frac2p+\mu}_{2,\infty}\bigr)_{\h}\bigl(\dB^{-\mu}_{\infty,\infty}\bigr)_{\v}}.
\end{split}\end{equation}

In view of Lemma \ref{prop2.20BCD}, we have
$\bigl(\dB^{\frac2p+\frac2{q_1}-1+\mu}_{q_1,\infty}\bigr)_{\h}
\bigl(\dB^{\frac1{q_2}-\mu}_{q_2,\infty}\bigr)_{\v}\hookrightarrow
\bigl(\dB^{\frac2p+\mu}_{2,\infty}\bigr)_{\h}\bigl(\dB^{-\mu}_{\infty,\infty}\bigr)_{\v}$.
Thus we get, by combining \eqref{5.35}-\eqref{5.38} and
using Lemma \ref{BiotSavartBesovanisotropic}, that
\begin{equation}\begin{split}\label{5.40}
&\bigl|\bigl(Q_2(v,v)
\big|\p3 v^3\bigr)_{\htr}\bigr|\\
&\leq C\|v^3\|_{SC}\|\nabla\p3 v^3\|_{\htr}
\Bigl(
\bigl\|\omega_{\frac{r}2}\bigr\|_{L^2} ^{2\alpha(r)+\frac2p} \bigl\|\nabla
\omega_{\frac{r}2}\bigr\|_{L^2} ^{1-\frac2p}
+\|\partial_3v^3\|_{\htr}^{\frac2p} \|\nabla
\partial_3v^3\|_{\htr}^{1-\frac2p}\Bigr)\\
&\leq \frac16\|\nabla\p3 v^3\|_{\htr}^2+C\|v^3\|_{SC}^2
\bigl\|\omega_{\frac{r}2}\bigr\|_{L^2} ^{2\bigl(2\alpha(r)+\frac2p\bigr)} \bigl\|\nabla
\omega_{\frac{r}2}\bigr\|_{L^2} ^{2\bigl(1-\frac2p\bigr)}
+C\|v^3\|_{SC}^p\|\partial_3v^3\|_{\htr}^2.
\end{split}\end{equation}

\no$\bullet$ \underline{The estimate of $\bigl|\bigl(Q_3(v,v) \, |\, \pa_3v^3\bigr)_{\cH^{\theta,r}}\bigr|$}

Let us first deal with the  estimate of  $\bigl|\bigl(\vh\cdot\nablah \p3 v^3 \, |\, \pa_3v^3\bigr)_{\cH^{\theta,r}}\bigr|.$
Applying  Bony's decomposition in the vertical variable for $ \vh\cdot\nablah \p3 v^3$ yields
\beno
\vh\cdot\nablah \p3 v^3=\left(T^\v+R^\v+\Tvb\right)(\vh, \nablah \p3 v^3).
\eeno

We first observe that $\p3$ is applied on the low-frequency part in $\Tvb(\vh,\nablah\p3 v^3)$,
but on the high-frequency part in $\Tvb(\p3\vh,\nablah v^3)$, hence naturally we believe that
the estimates for $\Tvb(\p3\vh,\nablah v^3)$ still hold for $\Tvb(\vh,\nablah\p3 v^3)$.
For the same reason, we also believe that
$\Rv(\vh,\nablah\p3 v^3)$ shares the same estimates as $\Rv(\p3\vh,\nablah v^3)$.
Indeed, exactly along the same line of the estimate for $\bigl|\bigl(Q_2(v,v)
\big|\p3 v^3\bigr)_{\htr}\bigr|$, we achieve
\begin{equation}\begin{split}\label{lem5.3.2}
\bigl|\bigl((\Tvb+\Rv)(\vh,\nablah\p3v^3)
\big|\p3 v^3\bigr)_{\htr}\bigr|
\leq &\frac1{6}\|\nabla\p3 v^3\|_{\htr}^2+C\|v^3\|_{SC}^p\|\partial_3v^3\|_{\htr}^2\\
&+C\|v^3\|_{SC}^2
\bigl\|\omega_{\frac{r}2}\bigr\|_{L^2} ^{2\bigl(2\alpha(r)+\frac2p\bigr)} \bigl\|\nabla
\omega_{\frac{r}2}\bigr\|_{L^2} ^{2\bigl(1-\frac2p\bigr)}.
\end{split}\end{equation}

It remains to deal with the estimate of $T^\v(\vh, \nablah \p3 v^3).$ By using Bony's decomposition in the
horizontal variables for $T^\v(\vh, \nablah \p3 v^3),$ we write
\beno
T^\v(\vh, \nablah \p3 v^3)=\left(T^\h+R^\h+\Thb\right)T^\v(\vh, \nablah \p3 v^3).
\eeno
We first write
\begin{equation}\label{lem5.3.3}
\bigl(\Th\Tv(\vh,\nablah\p3 v^3)
\big|\p3 v^3\bigr)_{\htr}=\sum_{k,\ell\in\Z}2^{2k(-3\alpha(r)+\theta)}2^{-2\ell\theta}
\bigl(I_{k,\ell}^1+I_{k,\ell}^2+I_{k,\ell}^3\bigr),\quad\mbox{with}
\end{equation}
\begin{align*}
I_{k,\ell}^1\eqdefa&\sum_{\substack{|k'-k|\leq 4\\|\ell'-\ell|\leq 4}}\bigl([\dhk\dvl,\Shkp\Svlp\vh]
\dhkp\dvlp\nablah\p3 v^3\big|\dhk\dvl\p3 v^3\bigr)_{L^2},\\
I_{k,\ell}^2\eqdefa&\sum_{\substack{|k'-k|\leq 4\\|\ell'-\ell|\leq 4}}\bigl(
(\Shkp\Svlp\vh-\Shk\Svl\vh)\dhkp\dvlp\dhk\dvl\nablah\p3 v^3\big|\dhk\dvl\p3 v^3\bigr)_{L^2},\\
I_{k,\ell}^3\eqdefa&-\frac12\bigl(\Shk\Svl\divh\vh\cdot\dhk\dvl\p3 v^3\big|\dhk\dvl\p3 v^3\bigr)_{L^2}.
\end{align*}
It follows from a standard commutator's estimate (see for instance \cite{BCD}) that
\begin{equation*}\begin{split}
\bigl|I_{k,\ell}^1\bigr|&\lesssim\sum_{\substack{|k'-k|\leq 4\\|\ell'-\ell|\leq 4}}\Bigl(
2^{-k}\bigl\|\Shkp\Svlp\nablah\vh\bigr\|_{L^{\frac{6p}{p+4}}}\bigl\|\dhkp\dvlp\nablah\p3 v^3\bigr\|_{L^2}
\bigl\|\dhk\dvl\p3 v^3\bigr\|_{L^{\frac{3p}{p-2}}}\\
&\qquad\qquad +2^{-\ell}\bigl\|\Shkp\Svlp\p3\vh\bigr\|_{\Lh^\infty\Lv^2}\bigl\|\dhkp\dvlp\nablah\p3 v^3\bigr\|_{\Lh^2\Lv^\infty}
\bigl\|\dhk\dvl\p3 v^3\bigr\|_{L^{2}}\Bigr)\\
&\eqdefa I_{k,\ell}^{1,1}+I_{k,\ell}^{1,2}.
\end{split}\end{equation*}
Noting that $-3\al(r)+\th<0,~-\th<0$,
we  use Lemmas \ref{minusBesov} and \ref{lemBern} to get
\begin{equation*}\begin{split}
I_{k,\ell}^{1,1}&\lesssim  c_{k,\ell}2^{-k(1-3\alpha(r)+\theta)}2^{\ell\th}
\bigl\|\nablah\vh\bigr\|_{\dH^{\frac23-\frac4{3p}-3\alpha(r)+\theta,\frac13-\frac2{3p}-\theta}}
\\
&\quad\times c_{k,\ell} 2^{k(1+3\alpha(r)-\theta)}
2^{-\ell(1-\theta)}
\bigl\|\p3 v^3\bigr\|_{\dH^{-3\alpha(r)+\theta,1-\theta}}
2^\ell\| v^3\|_{L^{\frac{3p}{p-2}}}\\
&\lesssim d_{k,\ell}2^{k(6\alpha(r)-2\theta)}2^{2\ell\theta}\cdot
\bigl\|\nablah\vh\bigr\|_{\dH^{\frac23-\frac4{3p}-3\alpha(r)+\theta,\frac13-\frac2{3p}-\theta}}
\bigl\|\p3^2v^3\bigr\|_{\htr}
\bigl\|v^3\bigr\|_{L^{\frac{3p}{p-2}}},
\end{split}\end{equation*}
and
\begin{equation*}\begin{split}
I_{k,\ell}^{1,2}&\lesssim c_{k,\ell}2^{k\mu}2^{\ell\bigl(3\alpha(r)+\frac2p-\mu-1\bigr)}
\bigl\|\p3\vh\bigr\|_{\dH^{1-\mu,-3\alpha(r)-\frac2p+\mu}_{2,1}}
2^{k\bigl(1-\mu-\frac2p\bigr)}2^{\ell(1+\mu)}\|v^3\|_{\bigl(\dB^{\frac2p+\mu}_{2,\infty}\bigr)_{\h}
\bigl(\dB^{-\mu}_{\infty,\infty}\bigr)_{\v}}
\\
&\quad\times c_{k,\ell}2^{-k\bigl(1-\frac2p-6\alpha(r)+2\theta \bigr)}2^{-\ell\bigl(\frac2p+3\alpha(r)-2\theta\bigr)}
\bigl\|\p3 v^3\bigr\|_{\dH^{1-\frac2p-6\alpha(r)+2\theta,\frac2p+3\alpha(r)-2\theta}}
\\
&\lesssim d_{k,\ell}2^{2k(3\alpha(r)-\theta)}2^{2\ell\theta}
\|\vh\|_{\dH^{1-\mu,1-3\alpha(r)-\frac2p+\mu}_{2,1}}
\| v^3\|_{\bigl(\dB^{\frac2p+\mu}_{2,\infty}\bigr)_{\h}\bigl(\dB^{-\mu}_{\infty,\infty}\bigr)_{\v}}
\bigl\|\nabla\p3 v^3\bigr\|_{\htr},
\end{split}\end{equation*}
Then by summing up in $k$,~$\ell$, using \eqref{a.1wrt}, Lemmas \ref{BiotSavartBesovanisotropic}
and \ref{BiotSavartomega},
we obtain
\begin{equation}\begin{split}\label{lem5.3.5}
&\sum_{k,\ell\in\Z}2^{2k(-3\alpha(r)+\theta)}2^{-2\ell\theta}\bigl|I_{k,\ell}^{1}\bigr|\\
&\lesssim \|v^3\|_{SC}\bigl\|\nabla\p3 v^3\bigr\|_{\htr}
\cdot\Bigl(\bigl\|(\omega,\pa_3v^3)\bigr\|
_{\dH^{\frac23-\frac4{3p}-3\alpha(r)+\theta,\frac13-\frac2{3p}-\theta}}
+\|\vh\|_{\dB^{1-\mu,1-3\alpha(r)-\frac2p+\mu}_{2,1}}\Bigr)\\
&\lesssim \|v^3\|_{SC}\bigl\|\nabla\p3 v^3\bigr\|_{\htr}
\cdot\Bigl(\bigl\|\omr\bigr\|_{L^2}^{2\alpha(r)+\frac2p}\bigl\|\nabla\omr\bigr\|_{L^2}^{1-\frac2p}
+\bigl\|\p3 v^3\bigr\|_{\htr}^{\frac2p}\bigl\|\nabla\p3 v^3\bigr\|_{\htr}^{1-\frac2p}\Bigr).
\end{split}\end{equation}

The estimate for $I_{k,\ell}^{2}$ is similar to $I_{k,\ell}^{1,2}$, whereas the estimate for
$I_{k,\ell}^{3}$ is similar to $I_{k,\ell}^{1,1}$.
Then we  conclude, by using \eqref{lem5.3.3} and Young's inequality, that
\begin{equation}\begin{split}\label{lem5.3.7}
\bigl|\bigl(\Th\Tv(\vh,\nablah\p3 v^3)
\big|\p3 v^3\bigr)_{\htr}\bigr|\leq & \frac1{36}\|\nabla\p3 v^3\|_{\htr}^2+C\|v^3\|_{SC}^p\|\partial_3v^3\|_{\htr}^2\\
&+C\|v^3\|_{SC}^2
\bigl\|\omega_{\frac{r}2}\bigr\|_{L^2} ^{2\bigl(2\alpha(r)+\frac2p\bigr)} \bigl\|\nabla
\omega_{\frac{r}2}\bigr\|_{L^2} ^{2\bigl(1-\frac2p\bigr)}.
\end{split}\end{equation}

Noting that $\nablah$ is applied on the high-frequency part in $\Th(\vh,\nablah\p3 v^3)$, so exactly along the same line of the proof of \eqref{lem5.3.7},
we  get
\begin{equation}\begin{split}\label{lem5.3.9}
\bigl|\bigl((\Thb+\Rh)\Tv(\vh,\nablah\p3 v^3)
\big|\p3 v^3\bigr)_{\htr}\bigr|
\leq & \frac1{36}\|\nabla\p3 v^3\|_{\htr}^2+C\|v^3\|_{SC}^p\|\partial_3v^3\|_{\htr}^2\\
&+C\|v^3\|_{SC}^2
\bigl\|\omega_{\frac{r}2}\bigr\|_{L^2} ^{2\bigl(2\alpha(r)+\frac2p\bigr)} \bigl\|\nabla
\omega_{\frac{r}2}\bigr\|_{L^2} ^{2\bigl(1-\frac2p\bigr)}.
\end{split}\end{equation}

Combining \eqref{lem5.3.2},~\eqref{lem5.3.7} and \eqref{lem5.3.9} gives rise to
\begin{equation}\begin{split}\label{lem5.3.1}
\bigl|\bigl(\vh\cdot\nablah \p3 v^3
\big|\p3 v^3\bigr)_{\htr}\bigr|\leq &\frac2{9}\|\nabla\p3 v^3\|_{\htr}^2+C\|v^3\|_{SC}^p\|\partial_3v^3\|_{\htr}^2\\
&+C\|v^3\|_{SC}^2
\bigl\|\omega_{\frac{r}2}\bigr\|_{L^2} ^{2\bigl(2\alpha(r)+\frac2p\bigr)} \bigl\|\nabla
\omega_{\frac{r}2}\bigr\|_{L^2} ^{2\bigl(1-\frac2p\bigr)}
.
\end{split}\end{equation}

To estimate $\bigl|\bigl(\v^3\cdot \p3^2 v^3
\big|\p3 v^3\bigr)_{\htr}\bigr|$, we first use integration by parts to get
$$\bigl(\v^3\cdot \p3^2 v^3\big|\p3 v^3\bigr)_{\htr}
=-\frac12 \bigl(\p3\v^3\cdot \p3 v^3\big|\p3 v^3\bigr)_{\htr},$$
then by applying Lemma \ref{lem5.1} and interpolation inequality, we obtain
\begin{equation}\begin{split}\label{lem5.3.13}
\bigl|\bigl(\v^3\cdot \p3^2 v^3\big|\p3 v^3\bigr)_{\htr}\bigr|
&\lesssim\bigl\|\p3 v^3\bigr\|_{\htrps}^2 \|v^3\|_{\Bs}\\
&\lesssim \bigl\|\p3 v^3\bigr\|_{\htr}^{\frac2p}\bigl\|\nabla\p3 v^3\bigr\|_{\htr}^{2\bigl(1-\frac1p\bigr)}\|v^3\|_{\Bs}.
\end{split}\end{equation}
Combining the estimates \eqref{lem5.3.1} and \eqref{lem5.3.13},
and using Young's inequality, we arrive at
\begin{equation}\begin{split}\label{lem5.3.14}
\bigl|\bigl(Q_3(v,v)
\big|\p3 v^3\bigr)_{\htr}\bigr|
\leq \frac13\|&\nabla\p3 v^3\|_{\htr}^2+C\|v^3\|_{SC}^2
\bigl\|\omega_{\frac{r}2}\bigr\|_{L^2} ^{2\bigl(2\alpha(r)+\frac2p\bigr)}
\bigl\|\nabla\omega_{\frac{r}2}\bigr\|_{L^2} ^{2\bigl(1-\frac2p\bigr)}\\
&+C\Bigl(\|v^3\|_{SC}^p+\|v^3\|_{\Bs}^p\Bigr)\|\partial_3v^3\|_{\htr}^2.
\end{split}\end{equation}

Now we are in a position to complete the proof of Proposition \ref{prop2.2}.

\begin{proof}[Proof of Proposition \ref{prop2.2}]
By  the assumptions of Proposition \ref{prop2.2}, we have $q_2<\bigl(\frac1p+3\al(r)+\mu\bigr)^{-1}$, so that  we can choose $s_1,$ and $s_2$ with
$$\frac1{s_2}=\frac1p+3\al(r)+\mu<\frac1{q_2},\quad \frac2{s_1}=1-\frac2p-\mu<\frac2{q_1}.$$
Then in view of Lemma \ref{prop2.20BCD}, there holds
$\bigl(\dB^{\frac2p+\frac1{q_1}-1+\mu}_{q_1,\infty}\bigr)_{\h}
\bigl(\dB^{\frac1{q_2}-\mu}_{q_2,\infty}\bigr)_{\v}\hookrightarrow\Bs$.
Substituting \eqref{5.25},~\eqref{5.40} and \eqref{lem5.3.14}
 into \eqref{5.1} leads to
\begin{align*}
\f{d}{dt}\|\pa_3v^3&(t)\|_{\cH^{\theta,r}}^2+\|\na\pa_3v^3(t)\|_{\cH^{\theta,r}}^2
\leq C\|v^3\|_{SC}^p\|\partial_3v^3\|_{\htr}^2\\
&+C\|v^3\|_{SC}\bigl\|\omega_{\frac{r}2}\bigr\|_{L^2} ^{2\bigl(2\alpha(r)+\frac1p\bigr)}
\bigl\|\nabla\omega_{\frac{r}2}\bigr\|_{L^2} ^{2\bigl(1-\frac1p\bigr)}
+C\|v^3\|_{SC}^2
\bigl\|\omega_{\frac{r}2}\bigr\|_{L^2} ^{2\bigl(2\alpha(r)+\frac2p\bigr)}
\bigl\|\nabla\omega_{\frac{r}2}\bigr\|_{L^2} ^{2\bigl(1-\frac2p\bigr)}.
\end{align*}
Applying Gronwall's inequality  yields \eqref{b.4bqp}. \end{proof}

%%%%%%%%%%%%%%%%%%%%%%%%%%%%%%%%%%%%%%%%%%%%%%%%%%%%%%%%%
\renewcommand{\theequation}{\thesection.\arabic{equation}}
\setcounter{equation}{0}
%%%%%%%%%%%%%%%%%%%%%%%%%%%%%%%%%%%%%%%%%%%%%%%%%%%%%%%%%%%%

\appendix

%%%%%%%%%
%%%%%%%%%
%%%%%%%%%
%%%%%%
\setcounter{equation}{0}
\section{Tool box on Functional spaces}\label{apB}

We first recall  the definition of homogeneous Besov space:
\begin{defi}\label{defbesov}
{\sl  Let $(p,q,r)$ be in~$[1,\infty]^3$ and~$s$ in~$\R$. Let us consider~$u$ in~${\mathcal
S}_h'(\R^d),$ which means that $u$ is in~$\cS'(\R^d)$ and satisfies~$\ds\lim_{j\to-\infty}\|S_ju\|_{L^\infty}=0$. We set
$$
\|u\|_{\dB^s_{p,r}}\eqdefa\big\|\big(2^{js}\|\Delta_j
u\|_{L^{p}}\big)_j\bigr\|_{\ell ^{r}(\ZZ)}.
$$
\begin{itemize}
\item
For $s<\frac{d}{p}$ (or $s=\frac{d}{p}$ if $r=1$), we define $
\dB^s_{p,r}(\R^d)\eqdefa \big\{u\in{\mathcal S}_h'(\R^d)\;\big|\; \|
u\|_{\dB^s_{p,r}}<\infty\big\}.$
\item
If $k\in\N$ and if~$\frac{d}{p}+k\leq
s<\frac{d}{p}+k+1$ (or $s=\frac{d}{p}+k+1$ if $r=1$), then we
define~$ \dB^s_{p,r}(\R^d)$  as the subset of $u$
in~${\mathcal S}_h'(\R^d)$ such that $\partial^\beta u$ belongs to~$
\dB^{s-k}_{p,r}(\R^d)$ whenever $|\beta|=k.$
\end{itemize}
}
\end{defi}
We remark that $\dB^s_{2,2}$
coincides with the classical homogeneous Sobolev spaces $\dH^s$.

When $s<0,$ we also have  the following characterization of Besov spaces
$\dB^s_{p,r}:$

\begin{lem}[Proposition $2.33$ of \cite{BCD}]\label{minusBesov}
{\sl Let $s<0$, $1\leq p,r\leq\infty$ and $u\in{\mathcal S}_h'(\R^d)$.
Then $u$ belongs to $\dB^s_{p,r}(\R^d)$ if and only if
$$\bigl( 2^{js}\|\dot{S}_j u\|_{L^p}\bigr)_{j\in\Z}\in\ell^r.$$
Moreover, for some constant $C$ depending only on the dimension $d$, we have
$$C^{-|s|+1}\|u\|_{\dB^s_{p,r}}\leq\Bigl\|\bigl( 2^{js}\|\dot{S}_j u\|_{L^p}\bigr)_{j\in\Z}\Bigr\|_{\ell^r}
\leq C\bigl(1+\frac{1}{|s|}\bigr)\|u\|_{\dB^s_{p,r}}.$$
}\end{lem}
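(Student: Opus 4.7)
The plan is to exploit two elementary identities connecting the low-frequency cutoff $\dot S_j$ and the dyadic block $\dot\Delta_j$: the telescoping relation $\dot\Delta_j=\dot S_{j+1}-\dot S_j$ and, conversely, $\dot S_j u=\sum_{j'\leq j-1}\dot\Delta_{j'}u$. The second identity converges in $\cS'$ precisely because $u\in\cS'_h$: by definition $\|\dot S_{j'}u\|_{L^\infty}\to 0$ as $j'\to-\infty$. Each identity reduces one half of the claim to a routine Young-type inequality on the dyadic index $j\in\Z$.

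For the right-hand inequality, I would write
$$2^{js}\|\dot S_j u\|_{L^p}\leq\sum_{k\geq 1}2^{ks}\cdot\bigl(2^{(j-k)s}\|\dot\Delta_{j-k} u\|_{L^p}\bigr),$$
which is the discrete convolution of $a_k=2^{ks}\mathbf 1_{k\geq 1}$ with $b_{j'}=2^{j's}\|\dot\Delta_{j'} u\|_{L^p}$. Applying the discrete Young inequality $\|a*b\|_{\ell^r}\leq\|a\|_{\ell^1}\|b\|_{\ell^r}$ produces the bound with constant $\sum_{k\geq 1}2^{ks}=2^s/(1-2^s)$. Since $s<0$, the sum converges, and the elementary estimate $1-2^s\geq(\ln 2)|s|$ near $s=0$ yields the advertised $C(1+1/|s|)$ factor; the divergence of the series at $s=0$ is exactly why the hypothesis $s<0$ is essential.

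For the left-hand inequality, the telescoping identity $\dot\Delta_j u=\dot S_{j+1}u-\dot S_j u$ gives the pointwise-in-$j$ bound
$$2^{js}\|\dot\Delta_j u\|_{L^p}\leq 2^{|s|}\cdot 2^{(j+1)s}\|\dot S_{j+1}u\|_{L^p}+2^{js}\|\dot S_j u\|_{L^p},$$
and taking $\ell^r(\Z)$-norms in $j$ produces $\|u\|_{\dB^s_{p,r}}\leq(1+2^{|s|})\bigl\|(2^{js}\|\dot S_j u\|_{L^p})_j\bigr\|_{\ell^r}$, which matches the $C^{-|s|-1}$-type lower constant printed in the statement.

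The real obstacle is merely bookkeeping on the constants — specifically the sharp dependence on~$|s|$, which blows up like $1/|s|$ as $s\to 0^-$ (from the geometric series) and like $2^{|s|}$ as $|s|\to\infty$ (from the telescoping step). One also needs to verify that the sequence condition $(2^{js}\|\dot S_j u\|_{L^p})_j\in\ell^r$ is consistent with $u\in\cS'_h$, which follows from Bernstein's lemma applied to the spectrally localized function $\dot S_j u$: $\|\dot S_j u\|_{L^\infty}\lesssim 2^{jd/p}\|\dot S_j u\|_{L^p}\lesssim 2^{j(d/p-s)}$ tends to $0$ as $j\to-\infty$ since $s<0<d/p$.
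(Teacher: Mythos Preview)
The paper does not supply its own proof of this lemma: it is stated in Appendix~\ref{apB} as Proposition~2.33 of~\cite{BCD} and used as a black box. Your argument is correct and is precisely the standard proof given in~\cite{BCD}: the upper bound comes from $\dot S_j u=\sum_{j'\leq j-1}\dot\Delta_{j'}u$ and a discrete Young inequality (the geometric series $\sum_{k\geq 1}2^{ks}$ produces the $1/|s|$ blow-up as $s\to 0^-$), and the lower bound from the telescoping identity $\dot\Delta_j=\dot S_{j+1}-\dot S_j$ (yielding the $2^{|s|}$ growth). There is nothing to compare against here; your write-up could serve as the reference proof.
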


For the
convenience of the readers, we recall the following anisotropic
Bernstein type lemma from \cite{CZ1, Pa02}:

\begin{lem}
\label{lemBern}
{\sl Let $\cB_{h}$ (resp.~$\cB_{v}$) a ball
of~$\R^2_{h}$ (resp.~$\R_{v}$), and~$\cC_{h}$ (resp.~$\cC_{v}$) a
ring of~$\R^2_{h}$ (resp.~$\R_{v}$); let~$1\leq p_2\leq p_1\leq
\infty$ and ~$1\leq q_2\leq q_1\leq \infty.$ Then there holds:

\smallbreak\noindent If the support of~$\wh a$ is included
in~$2^k\cB_{h}$, then
\[
\|\partial_{x_{\rm h}}^\alpha a\|_{L^{p_1}_{\rm h}(L^{q_1}_{\rm v})}
\lesssim 2^{k\left(|\al|+2\left(1/{p_2}-1/{p_1}\right)\right)}
\|a\|_{L^{p_2}_{\rm h}(L^{q_1}_{\rm v})}.
\]
If the support of~$\wh a$ is included in~$2^\ell\cB_{v}$, then
\[
\|\partial_{x_3}^\beta a\|_{L^{p_1}_{\rm h}(L^{q_1}_{\rm v})}
\lesssim 2^{\ell\left(\beta+(1/{q_2}-1/{q_1})\right)} \|
a\|_{L^{p_1}_{\rm h}(L^{q_2}_{\rm v})}.
\]
If the support of~$\wh a$ is included in~$2^k\cC_{h}$, then
\[
\|a\|_{L^{p_1}_{\rm h}(L^{q_1}_{\rm v})} \lesssim
2^{-kN}\sup_{|\al|=N} \|\partial_{x_{\rm h}}^\al a\|_{L^{p_1}_{\rm
h}(L^{q_1}_{\rm v})}.
\]
If the support of~$\wh a$ is included in~$2^\ell\cC_{v}$, then
\[
\|a\|_{L^{p_1}_{\rm h}(L^{q_1}_{\rm v})} \lesssim 2^{-\ell N}
\|\partial_{x_3}^N a\|_{L^{p_1}_{\rm h}(L^{q_1}_{\rm v})}.
\]
}
\end{lem}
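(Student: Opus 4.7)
The plan is to prove the anisotropic Bernstein lemma by the standard convolution representation, treating the horizontal and vertical variables separately and applying Young's inequality in each variable while the other is frozen.

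First I would introduce two auxiliary cut-offs $\widetilde\chi_\h\in C_c^\infty(\R^2)$ and $\widetilde\chi_\v\in C_c^\infty(\R)$ such that $\widetilde\chi_\h\equiv 1$ on a neighborhood of $\cB_\h$ (or $\cC_\h$ for the ring statements) and $\widetilde\chi_\v\equiv 1$ on a neighborhood of $\cB_\v$ (or $\cC_\v$). Setting $\phi_k^\h(x_\h)\eqdefa 2^{2k}(\cF^{-1}\widetilde\chi_\h)(2^k x_\h)$ and $\phi_\ell^\v(x_3)\eqdefa 2^\ell(\cF^{-1}\widetilde\chi_\v)(2^\ell x_3)$, the assumption $\Supp \widehat a\subset 2^k\cB_\h$ gives $a=\phi_k^\h\ast_\h a$, where $\ast_\h$ denotes convolution only in the horizontal variable; similarly in the vertical case.

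For the first inequality, I would write $\partial_{x_\h}^\al a=(\partial_{x_\h}^\al\phi_k^\h)\ast_\h a$ and apply Young's inequality in $x_\h$ with $x_3$ frozen, choosing the auxiliary index $s\in[1,\infty]$ by $1+1/p_1=1/s+1/p_2$. A change of variables gives the scaling $\|\partial_{x_\h}^\al\phi_k^\h\|_{L^s_\h}=2^{k(|\al|+2(1-1/s))}\|\partial_{x_\h}^\al(\cF^{-1}\widetilde\chi_\h)\|_{L^s_\h}$, and then taking the $L^{q_1}_\v$ norm of both sides (noting that $q_1$ plays no role since no vertical operation was performed) yields the claim. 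The analogous argument in the vertical variable, using $\partial_{x_3}^\beta a=(\partial_{x_3}^\beta\phi_\ell^\v)\ast_\v a$ and Young's inequality in $x_3$ with $x_\h$ frozen, gives the second estimate.

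For the two remaining statements concerning rings, the idea is to invert the derivative. On $2^k\cC_\h$ the function $|\xi_\h|^{-N}$ is smooth, so there exists $\psi\in C_c^\infty(\R^2)$ supported near $\cC_\h$ with $\psi(\xi_\h)=|\xi_\h|^{-N}$ on $\cC_\h$; then $a=2^{-kN}\sum_{|\al|=N}\psi_{k,\al}\ast_\h\partial_{x_\h}^\al a$ for suitable rescaled multipliers $\psi_{k,\al}$, and Young in $x_\h$ with $L^{q_1}_\v$ on top gives the bound $\|a\|_{L^{p_1}_\h(L^{q_1}_\v)}\lesssim 2^{-kN}\sup_{|\al|=N}\|\partial_{x_\h}^\al a\|_{L^{p_1}_\h(L^{q_1}_\v)}$. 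The vertical ring statement is identical with $2^{-\ell N}$ and $\partial_{x_3}^N$. No step here presents any real obstacle; the only point deserving care is the kernel scaling, where one must track that the horizontal and vertical convolutions commute so that Young's inequality in a single variable can legitimately be applied inside a mixed-norm Lebesgue space.
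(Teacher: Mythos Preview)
Your approach is the standard one and is essentially correct. Note, however, that the paper does not actually prove this lemma: it is stated in the appendix as a known tool and attributed to \cite{CZ1, Pa02}, with no argument given. So there is no ``paper's own proof'' to compare against; your convolution-kernel argument is precisely the classical proof one finds in those references (and in \cite{BCD} for the isotropic version).

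One small point of care in your write-up: when you say ``apply Young's inequality in $x_\h$ with $x_3$ frozen'' and then ``take the $L^{q_1}_\v$ norm of both sides,'' the resulting norm is $L^{q_1}_\v(L^{p_1}_\h)$, not the required $L^{p_1}_\h(L^{q_1}_\v)$. The correct order is the reverse: first apply Minkowski's integral inequality in $x_3$ to the horizontal convolution, obtaining
\[
\|(\partial_{x_\h}^\al\phi_k^\h)\ast_\h a(x_\h,\cdot)\|_{L^{q_1}_\v}
\le \bigl(|\partial_{x_\h}^\al\phi_k^\h|\ast_\h \|a(\cdot,\cdot)\|_{L^{q_1}_\v}\bigr)(x_\h),
\]
and then apply Young in $x_\h$ to the outer $L^{p_1}_\h$ norm. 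You seem aware of this in your closing remark about mixed norms, but the body of the argument should be ordered accordingly. With that adjustment the proof is complete.
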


\begin{lem}[Proposition $2.20$ of \cite{BCD}]
\label{prop2.20BCD}
{\sl Let $1\leq p_1\leq p_2\leq\infty,\,$and$\, 1\leq r_1\leq r_2\leq\infty.$
Then for any $1\leq s\leq\infty$, the space $\dB^s_{p_1,r_1}$ is continuously
embedded in $\dB^{s-d(\frac{1}{p_1}-\frac{1}{p_2})}_{p_2,r_2}$.
}\end{lem}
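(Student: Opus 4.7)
The plan is to derive this standard Besov embedding from two classical ingredients: the isotropic Bernstein inequality, and the trivial monotonicity of the sequence spaces $\ell^{r}$ in the summation index $r$.

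First, fix $u\in\dB^s_{p_1,r_1}$ and apply Bernstein's inequality to each homogeneous dyadic block $\ddj u$. Since the Fourier transform of $\ddj u$ is supported in a dyadic annulus of size $2^j$, the classical isotropic Bernstein inequality (which is the single-factor analogue of Lemma \ref{lemBern}, obtained by taking $p_1=q_1$ and $p_2=q_2$ and viewing $\R^d$ without the horizontal/vertical splitting) gives
$$\|\ddj u\|_{L^{p_2}(\R^d)} \lesssim 2^{jd(\frac{1}{p_1}-\frac{1}{p_2})}\,\|\ddj u\|_{L^{p_1}(\R^d)},$$
with an implicit constant depending only on $d$ and on the profile functions $\chi,\varphi$ fixed in \eqref{defparaproduct}.

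Second, multiply both sides by $2^{j(s-d(1/p_1-1/p_2))}$ to obtain the pointwise (in $j$) estimate
$$2^{j(s-d(1/p_1-1/p_2))}\,\|\ddj u\|_{L^{p_2}} \;\lesssim\; 2^{js}\,\|\ddj u\|_{L^{p_1}} \qquad \forall\, j\in\ZZ.$$
Now take the $\ell^{r_2}(\ZZ)$ norm of both sequences; since $r_1\leq r_2$, the trivial embedding $\ell^{r_1}(\ZZ)\hookrightarrow\ell^{r_2}(\ZZ)$ then yields
$$\|u\|_{\dB^{s-d(1/p_1-1/p_2)}_{p_2,r_2}} \;\lesssim\; \bigl\|\bigl(2^{js}\|\ddj u\|_{L^{p_1}}\bigr)_j\bigr\|_{\ell^{r_2}(\ZZ)} \;\leq\; \bigl\|\bigl(2^{js}\|\ddj u\|_{L^{p_1}}\bigr)_j\bigr\|_{\ell^{r_1}(\ZZ)} \;=\; \|u\|_{\dB^s_{p_1,r_1}}.$$

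No genuine obstacle is expected: both Bernstein's inequality in frequency and the $\ell^r$ monotonicity in $r$ are one-line facts. The only bookkeeping is to ensure the output distribution fits the framework of Definition \ref{defbesov}. When $s-d(1/p_1-1/p_2)$ falls in the regime of the first clause of that definition, membership in $\cS'_h$ is inherited: applying Bernstein also to the low-frequency cutoff $\dot S_j u$ (whose Fourier support lies in a ball of radius $\sim 2^j$) shows that $\|\dot S_j u\|_{L^\infty}\lesssim 2^{jd/p_1}\|\dot S_j u\|_{L^{p_1}}\to 0$ as $j\to-\infty$, so the condition $\lim_{j\to-\infty}\|\dot S_j u\|_{L^\infty}=0$ is preserved. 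If instead $s$ is large enough that the second clause of Definition \ref{defbesov} is needed, one simply applies the above argument to $\partial^\beta u$ with $|\beta|=k$, reducing to the negative-regularity regime already handled. Thus the embedding holds for every admissible $s$, with an absolute constant depending only on $d,s,p_1,p_2,r_1,r_2$.
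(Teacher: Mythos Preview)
Your proof is correct and is the standard argument. Note that the paper does not actually prove this lemma: it is stated in the appendix as a toolbox result with a citation to Proposition~2.20 of \cite{BCD}, so there is no ``paper's own proof'' to compare against. Your argument via Bernstein's inequality on each dyadic block followed by the $\ell^{r_1}\hookrightarrow\ell^{r_2}$ embedding is precisely the proof given in \cite{BCD}.
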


\begin{lem}[Theorem $2.40$ and $2.41$ of \cite{BCD}]
\label{Thm2.40BCD}
{\sl For any $p\in[2,\infty[$, we have
$$\dB^0_{p,2}\hookrightarrow L^p\hookrightarrow\dB^0_{p,p}\quad\mbox{and}\quad
\dB^0_{p',p'}\hookrightarrow L^{p'}\hookrightarrow\dB^0_{p',2}.$$
}\end{lem}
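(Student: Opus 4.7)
The plan is to reduce everything to the Littlewood-Paley square function characterization of $L^p$, namely that for $1<p<\infty$,
$$\|u\|_{L^p}\sim \bigl\|S(u)\bigr\|_{L^p}\with S(u)\eqdefa\Bigl(\sum_{j\in\Z}|\dj u|^2\Bigr)^{\f12},$$
and then to combine it with Minkowski's integral inequality and the nesting of $\ell^q$ spaces. Since $p'\in]1,2]$ when $p\in[2,\infty[$, the two lines in the statement cover the two complementary ranges $q\geq 2$ and $q\leq 2$, and the arguments mirror each other.

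First, for $\dot B^0_{p,2}\hookrightarrow L^p$ with $p\geq 2$: since $p/2\geq 1$, Minkowski's inequality in $L^{p/2}$ gives
$$\bigl\|S(u)\bigr\|_{L^p}^2=\Bigl\|\sum_{j\in\Z}|\dj u|^2\Bigr\|_{L^{p/2}}\leq \sum_{j\in\Z}\bigl\|\dj u\bigr\|_{L^p}^2=\|u\|_{\dB^0_{p,2}}^2,$$
and then the square function theorem yields $\|u\|_{L^p}\lesssim\|S(u)\|_{L^p}\lesssim\|u\|_{\dB^0_{p,2}}$. For $L^p\hookrightarrow \dB^0_{p,p}$ with $p\geq 2$: since $\ell^2\hookrightarrow\ell^p$ pointwise, i.e. $\sum_j|\dj u(x)|^p\leq\bigl(\sum_j|\dj u(x)|^2\bigr)^{p/2}$, integrating and applying the square function theorem gives
$$\|u\|_{\dB^0_{p,p}}^p=\int_{\R^d}\sum_{j\in\Z}|\dj u(x)|^p\,dx\leq \int_{\R^d}S(u)(x)^p\,dx\lesssim\|u\|_{L^p}^p.$$

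For the second line, with $p'\in]1,2]$, both inclusions proceed by the same type of argument but with the roles of the pointwise nesting and Minkowski reversed. Namely, for $\dB^0_{p',p'}\hookrightarrow L^{p'}$ one uses $\ell^{p'}\hookrightarrow\ell^2$ pointwise to get $S(u)(x)^{p'}\leq\sum_j|\dj u(x)|^{p'}$, and then the square function lower bound $\|u\|_{L^{p'}}\lesssim\|S(u)\|_{L^{p'}}$; for $L^{p'}\hookrightarrow \dB^0_{p',2}$ one uses Minkowski's integral inequality in the form $L^{p'}_x(\ell^2_j)\hookrightarrow \ell^2_j(L^{p'}_x)$ (valid precisely because $p'\leq 2$) to obtain $\|u\|_{\dB^0_{p',2}}\leq \|S(u)\|_{L^{p'}}\lesssim\|u\|_{L^{p'}}$.

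None of these steps is really an obstacle; the only delicate point is checking the direction of Minkowski (integral vs discrete) in each case, which is dictated by whether the outer exponent is $\geq 1$ or $\leq 1$, and by the $L^p(L^q)$ versus $L^q(L^p)$ embedding. Since the full statement is cited from \cite{BCD} (Theorems $2.40$ and $2.41$), in the actual write-up I would simply refer to those theorems rather than reproduce the argument.
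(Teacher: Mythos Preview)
Your sketch is correct and is precisely the standard argument behind Theorems~2.40 and~2.41 in \cite{BCD}: the Littlewood--Paley square function equivalence on $L^p$ for $1<p<\infty$, combined with Minkowski's inequality (in the direction dictated by whether $p\geq 2$ or $p\leq 2$) and the pointwise nesting $\ell^{\min(p,2)}\hookrightarrow\ell^{\max(p,2)}$. The paper itself does not reproduce any proof of this lemma; it simply records the statement in the appendix with a citation to \cite{BCD}, which is exactly what you suggest doing in your final sentence.
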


\begin{lem}[Lemma $5.1$ of \cite{CZ5}]
 \label{puisancealphaBesov}
 {\sl Let~$(s,\al)$ be in~$]0,1[^2$ and~$(p,q)$ in~$[1,\infty]^2$. For any function $G$ from $\R$ to $\R$ which is H\"olderian
of exponent~$\al$, and any $a\in\dB^s_{p,q},$ one has
 $$
 \|G(a)\|_{\dB^{\al s}_{\frac p\al,\frac q\al} }\lesssim \|G\|_{C^\al} \bigl(\|a\|_{\dB^s_{p,q}}\bigr)^\al\with
\|G\|_{C^\al} \eqdefa \sup_{r\not =r' } \frac
{|G(r)-G(r')|}{|r-r'|^\al} \,\cdotp
 $$
 }
 \end{lem}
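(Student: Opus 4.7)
The plan is to reduce the claim to the first-order difference characterization of homogeneous Besov spaces in the range $s \in \,]0,1[$. For such $s$ and $(p,q)\in[1,\infty]^2$, one has
\begin{equation*}
\|f\|_{\dB^s_{p,q}} \sim \biggl(\int_{\R^d} \Bigl(\frac{\|f(\cdot+h)-f(\cdot)\|_{L^p}}{|h|^s}\Bigr)^q \frac{dh}{|h|^d}\biggr)^{1/q},
\end{equation*}
with the obvious supremum interpretation when $p$ or $q$ equals $\infty$. This is the homogeneous analogue of Theorem~2.36 of~\cite{BCD}, and I would take it as granted.

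The first step is to transfer the $C^\alpha$ hypothesis on $G$ to an $L^{p/\alpha}$ difference estimate. Pointwise in $x$ one has $|G(a(x+h))-G(a(x))|\leq \|G\|_{C^\alpha}\,|a(x+h)-a(x)|^\alpha$. Raising to the power $p/\alpha$ and integrating in $x$ gives the key bound
\begin{equation*}
\|G(a)(\cdot+h)-G(a)(\cdot)\|_{L^{p/\alpha}} \leq \|G\|_{C^\alpha}\, \|a(\cdot+h)-a(\cdot)\|_{L^{p}}^{\alpha}.
\end{equation*}

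The second step is to plug this into the difference characterization applied to $G(a)$ in the target space $\dB^{\alpha s}_{p/\alpha,q/\alpha}$; note that $\alpha s \in \,]0,1[$, so the characterization does apply. Dividing the previous inequality by $|h|^{\alpha s}$ and raising to the power $q/\alpha$ rewrites the right-hand side as $\|G\|_{C^\alpha}^{q/\alpha}\bigl(|h|^{-s}\|a(\cdot+h)-a(\cdot)\|_{L^p}\bigr)^{q}$, so that integrating against $dh/|h|^d$ and taking the $(\alpha/q)$-th power yields
\begin{equation*}
\|G(a)\|_{\dB^{\alpha s}_{p/\alpha,q/\alpha}} \lesssim \|G\|_{C^\alpha}\, \|a\|_{\dB^s_{p,q}}^{\alpha},
\end{equation*}
which is the stated bound. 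The endpoint cases $p=\infty$ or $q=\infty$ are handled by the same manipulation with suprema replacing the corresponding integrals.

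The routine parts of this argument are the algebra of exponents and the endpoint adjustments. The main subtlety, and what I expect to be the real obstacle, is ensuring that $G(a)$ lies in $\cS'_h(\R^d)$ so that its homogeneous Besov norm is well defined; since $G$ is only Hölderian and not linear, one has to work with $G(a)-G(0)$ and combine the pointwise bound $|G(a)-G(0)|\leq \|G\|_{C^\alpha}|a|^\alpha$ with the vanishing of the low-frequency truncations of $a$ guaranteed by $a\in\cS'_h$, typically via a Fatou-type argument on dyadic blocks. This tempered-distribution issue is standard but delicate, and it is essentially the reason why the statement restricts to $s\in \,]0,1[$.
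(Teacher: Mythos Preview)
The paper does not give its own proof of this lemma; it simply quotes it as Lemma~5.1 of~\cite{CZ5}. Your argument via the first-order difference characterization of $\dB^s_{p,q}$ for $s\in\,]0,1[$ is correct and is in fact the standard route to this estimate: the pointwise H\"older bound on $G$ converts the $L^{p/\alpha}$ modulus of continuity of $G(a)$ into the $\alpha$-th power of the $L^p$ modulus of $a$, and the exponents in the Besov seminorm rescale accordingly. Your remark about the $\cS'_h$ issue is well taken but not a genuine obstruction here, since the finiteness of the difference seminorm together with $|G(a)-G(0)|\in L^{p/\alpha}$ already yields membership in the target space.
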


\begin{lem}[Lemma $4.3$ of \cite{CZ5}]
\label{lem4.3ofCZ5}
 {\sl For any~$s$ positive and any~$\theta$ in~$]0,s[$,
we have
$$
\|f\|_{(\dB^{s-\theta}_{p,q})_{\rm h}(\dB^{\theta}_{p,1})_{\rm v}}
\lesssim \|f\|_{\dB^{s}_{p,q}}.
$$
}\end{lem}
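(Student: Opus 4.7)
The plan is to reduce the anisotropic Besov norm $\|\cdot\|_{(\dB^{s-\theta}_{p,q})_{\rm h}(\dB^{\theta}_{p,1})_{\rm v}}$ on the left to the isotropic $\|\cdot\|_{\dB^s_{p,q}}$ on the right by exploiting the spectral compatibility between the two dyadic decompositions. The key elementary observation is that on the support of $\Delta_k^{\rm h}\Delta_\ell^{\rm v}$ one has $|\xi|^2 = |\xi_{\rm h}|^2 + |\xi_3|^2 \sim \max(2^{2k},2^{2\ell})$, so that $\Delta_k^{\rm h}\Delta_\ell^{\rm v}\Delta_j f$ vanishes except when $|j-\max(k,\ell)|\leq C$ for some universal constant. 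Since the isotropic and anisotropic projectors are uniformly bounded on $L^p$, decomposing $\Delta_k^{\rm h}\Delta_\ell^{\rm v} f = \sum_j \Delta_k^{\rm h}\Delta_\ell^{\rm v}\Delta_j f$ yields the pointwise-in-$(k,\ell)$ estimate
$$\|\Delta_k^{\rm h}\Delta_\ell^{\rm v} f\|_{L^p} \lesssim \sum_{|j-\max(k,\ell)|\leq C} \|\Delta_j f\|_{L^p}.$$

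Next, for fixed $k$ I would split the inner weighted sum over $\ell$ into the ranges $\ell\leq k$ and $\ell > k$. In the first range the relevant indices $j$ stay $\sim k$, and the geometric series in $\ell$ with factor $2^{\ell\theta}$ converges because $\theta>0$, producing a bound $2^{k\theta}\sum_{j\sim k}\|\Delta_j f\|_{L^p}$. In the second range one has $j \sim \ell$ so, after absorbing the finite multiplicity in $j$, the sum is controlled by $\sum_{j\geq k-C} 2^{j\theta}\|\Delta_j f\|_{L^p}$. The two contributions combine to give
$$\sum_\ell 2^{\ell\theta}\|\Delta_k^{\rm h}\Delta_\ell^{\rm v} f\|_{L^p} \lesssim \sum_{j \geq k-C} 2^{j\theta}\|\Delta_j f\|_{L^p}.$$

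Setting $a_j = 2^{js}\|\Delta_j f\|_{L^p}$ and multiplying the preceding inequality by $2^{k(s-\theta)}$ rewrites the right-hand side as the discrete convolution $\sum_{m\geq -C} 2^{-m(s-\theta)} a_{k+m}$. Since $s-\theta>0$, the kernel $m\mapsto 2^{-m(s-\theta)}\mathbf{1}_{m\geq -C}$ is summable, so Young's inequality $\ell^1 \ast \ell^q \hookrightarrow \ell^q$ in the variable $k$ finally yields
$$\Bigl(\sum_k 2^{qk(s-\theta)}\Bigl(\sum_\ell 2^{\ell\theta}\|\Delta_k^{\rm h}\Delta_\ell^{\rm v} f\|_{L^p}\Bigr)^q\Bigr)^{1/q} \lesssim \|(a_j)\|_{\ell^q} = \|f\|_{\dB^s_{p,q}},$$
which is exactly the desired bound. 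I do not anticipate a genuine obstacle: the argument rests solely on the two strict positivity conditions $\theta>0$ and $s-\theta>0$, which are precisely the hypotheses of the statement and which separately guarantee the summability of the two geometric series involved; the implicit constant blows up only when $\theta$ approaches $0$ or $s$, as expected. The same scheme adapts mechanically if the microlocal exponents on the horizontal and vertical sides are replaced by any other pair of Lebesgue-type summation indices, so long as these two positivity conditions are preserved.
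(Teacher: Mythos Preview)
Your argument is correct. The paper does not give its own proof of this lemma; it merely quotes it as Lemma~4.3 of \cite{CZ5}. Your approach---the spectral localisation $\Delta_k^{\rm h}\Delta_\ell^{\rm v}\Delta_j f\neq 0 \Rightarrow j\sim\max(k,\ell)$, the split $\ell\leq k$ versus $\ell>k$, and the final $\ell^1\ast\ell^q$ convolution estimate exploiting $s-\theta>0$---is the standard route and matches what one finds in \cite{CZ5}; there is no substantive difference to report.
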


We shall frequently use the following non-linear interpolation inequalities.
\begin{lem}[Lemma $3.1$ of \cite{CZZ}]
\label{BiotSavartomega}
{\sl For $r$ in~ $]3/2,2[,$ we have
\beq
\label{estimbasomega34}
 \|\nabla a\|_{L^{r}} \lesssim \bigl\|\nabla a_{\frac{r}2}\bigr\|_{L^2}
 \bigl\|a_{\frac{r}2}  \bigr\|_{L^2}^{\frac 2 {r}   -1}.
 \eeq
Moreover, for~$s$  in $\ds[-3\al(r)\,\virgp\, 1-\al(r)]$, we have
\beq
\label{estimbasomega34.0}
\|a\|_{\dH^s} \leq C
\|a_{\frac{r}2}\|_{L^2} ^{1-\al(r)-s}  \|\nabla
a_{\frac{r}2}\|_{L^2} ^{3\al(r)+s} .
 \eeq }
\end{lem}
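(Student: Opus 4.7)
My plan for the first inequality \eqref{estimbasomega34} is a direct pointwise identity followed by H\"older. Since $a_{r/2}=a|a|^{r/2-1}$, the chain rule (justified on the set $\{a\neq 0\}$ by a standard truncation/mollification argument, with the complementary set contributing nothing) gives $\nabla a_{r/2}=\tfrac{r}{2}|a|^{r/2-1}\nabla a$, so pointwise
\[
|\nabla a|^r=\Bigl(\tfrac{2}{r}\Bigr)^{r}|a_{r/2}|^{2-r}|\nabla a_{r/2}|^r,
\]
where I have used $|a_{r/2}|=|a|^{r/2}$. Integrating and applying H\"older's inequality with conjugate exponents $\bigl(\tfrac{2}{r},\tfrac{2}{2-r}\bigr)$, which are well defined because $r\in\,]3/2,2[$, immediately yields \eqref{estimbasomega34} after taking $r$-th roots.

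For the Gagliardo--Nirenberg type estimate \eqref{estimbasomega34.0} I will establish two endpoint bounds and then interpolate in the homogeneous Sobolev scale. At the lower endpoint $s=-3\al(r)$, the Sobolev embedding $\dH^{3\al(r)}(\R^3)\hookrightarrow L^{r'}(\R^3)$, valid since $3\al(r)=3(\tfrac{1}{r}-\tfrac{1}{2})$, dualizes to $L^r\hookrightarrow \dH^{-3\al(r)}$, whence $\|a\|_{\dH^{-3\al(r)}}\lesssim \|a\|_{L^r}=\|a_{r/2}\|_{L^2}^{2/r}$, which matches \eqref{estimbasomega34.0} at $s=-3\al(r)$ (where the power of $\|\nabla a_{r/2}\|_{L^2}$ vanishes). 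At the natural upper endpoint $s=1-3\al(r)$, applying the same embedding to $\nabla a$ and then invoking \eqref{estimbasomega34} gives
\[
\|a\|_{\dH^{1-3\al(r)}}\sim\|\nabla a\|_{\dH^{-3\al(r)}}\lesssim\|\nabla a\|_{L^r}\lesssim\|\nabla a_{r/2}\|_{L^2}\|a_{r/2}\|_{L^2}^{2/r-1},
\]
matching \eqref{estimbasomega34.0} at $s=1-3\al(r)$. Standard interpolation in the homogeneous Sobolev scale with parameter $\theta=s+3\al(r)\in[0,1]$ then produces the claimed inequality over the range $s\in[-3\al(r),1-3\al(r)]$; after substitution, the powers $1-\al(r)-s$ and $3\al(r)+s$ fall out from straightforward bookkeeping.

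The main obstacle I anticipate is pushing the admissible range all the way up to $s=1-\al(r)$ as stated, rather than stopping at $s=1-3\al(r)$. Closing the remaining interval $\,]1-3\al(r),1-\al(r)]$ would presumably require combining the stronger Sobolev information $a_{r/2}\in\dH^1\hookrightarrow L^6(\R^3)$, which already gives $\|a\|_{L^{3r}}\lesssim\|\nabla a_{r/2}\|_{L^2}^{2/r}$ with exactly the right scaling for $\dH^{1-\al(r)}$, together with the nonlinear composition estimate of Lemma~\ref{puisancealphaBesov} applied to the H\"older map $z\mapsto z_{r/2}$. Fortunately this refinement is never actually needed in the sequel, where \eqref{estimbasomega34.0} is only invoked at $s=1/p'-3\al(r)$ (see for instance \eqref{5.22}), a value that sits comfortably inside $\,]-3\al(r),1-3\al(r)[$.
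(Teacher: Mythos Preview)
The paper does not actually prove this lemma: it is quoted verbatim from \cite{CZZ} and used as a black box. So there is no in-paper argument to benchmark against, and the relevant question is simply whether your proposal stands on its own.

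Your proof of \eqref{estimbasomega34} is correct and is the standard one: the pointwise identity $|\nabla a|^r=(2/r)^r|a_{r/2}|^{2-r}|\nabla a_{r/2}|^r$ followed by H\"older with exponents $(2/r,\,2/(2-r))$ is exactly how this is done.

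Your argument for \eqref{estimbasomega34.0} is also correct, but, as you yourself flag, only on the subrange $s\in[-3\al(r),\,1-3\al(r)]$. The two endpoints $s=-3\al(r)$ and $s=1-3\al(r)$ are handled cleanly via $L^r\hookrightarrow\dH^{-3\al(r)}$ and \eqref{estimbasomega34}, and complex (or real) interpolation in $\dH^s$ fills in the interval. Your diagnosis of the remaining segment $\,]1-3\al(r),1-\al(r)]$ is honest: the $L^{3r}$ bound you write down has the right scaling but does not by itself give positive Sobolev regularity; one genuinely needs a nonlinear composition estimate of the type in Lemma~\ref{puisancealphaBesov} (applied to $G^{-1}(w)=w|w|^{2/r-1}$) to reach the top endpoint, and then interpolate again. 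That is indeed the mechanism behind the full statement in \cite{CZZ}.

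Your final remark is accurate and worth stressing: in the present paper, \eqref{estimbasomega34.0} is invoked only at $s=\tfrac1{p'}-3\al(r)$ (in \eqref{5.22}) and implicitly in Lemma~\ref{BiotSavartBesovanisotropic}, always with $s\le 1-3\al(r)$. So the portion you have proved is sufficient for every application that follows.
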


The following lemma is one of the main motivations of using anisotropic Besov space.
It can be viewed as a generalization of Proposition $2.1$ in \cite{LZ3} and Proposition $3.1$
in \cite{CZZ}, and its proof follows immediately by combining the proofs of these two propositions together.
\begin{lem}
\label{BiotSavartBesovanisotropic}
{\sl Let $\theta\in]0,\al(r)[$ and $s<\frac23\alpha(r),
~\beta<\min\bigl\{1-\frac83\alpha(r),\, 1-3\alpha(r)+\theta\bigr\}$ satisfy $s+\beta>0$.
Let~$v$ be a divergence free vector
field and $\omega=\pa_1 v^2-\pa_2 v^1$.
Then one has
$$
\|v^{\rm h}\|_{\bigl(\dB^{1-s}_{2,1}\bigr)_{\rm
h}\bigl(\dB^{1-3\al(r)-\beta}_{2,1}\bigr)_{\rm v}}\lesssim \bigl\|\,
\omega_{\frac{r}2}\bigr\|_{L^2} ^{2\alpha(r)+s+\beta} \bigl\|\nabla
\omega_{\frac{r}2}\bigr\|_{L^2} ^{1-s-\beta}
+\|\partial_3v^3\|_{\htr}^{s+\beta} \|\nabla
\partial_3v^3\|_{\htr}^{1-s-\beta}.
$$
}
\end{lem}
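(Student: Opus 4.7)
The plan is to start from the horizontal Biot--Savart decomposition \eqref{a.1wrt}, $v^{\rm h}=v^{\rm h}_{\rm curl}+v^{\rm h}_{\rm div}$, and estimate the two pieces separately in the target anisotropic Besov norm. The curl piece will produce the term with $\omega_{r/2}$ factors and the divergence piece the term with $\pa_3v^3$ factors; the overall strategy merges the arguments of Proposition 2.1 in \cite{LZ3} (which treats the $\omega$ piece) and Proposition 3.1 in \cite{CZZ} (which treats the $\pa_3v^3$ piece).

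For $v^{\rm h}_{\rm div}=-\nabla_{\rm h}\Delta_{\rm h}^{-1}\pa_3v^3$, since $\nabla_{\rm h}\Delta_{\rm h}^{-1}$ is a horizontal Fourier multiplier of order $-1$, Lemma \ref{lemBern} gives
$$
\|v^{\rm h}_{\rm div}\|_{(\dB^{1-s}_{2,1})_{\rm h}(\dB^{1-3\al(r)-\beta}_{2,1})_{\rm v}}\lesssim\sum_{k,\ell\in\Z}2^{-ks+\ell(1-3\al(r)-\beta)}\|\dhk\dvl\pa_3v^3\|_{L^2}.
$$
The definition of $\htr$ together with the inclusion $\nabla\pa_3v^3\in\htr$ produces two competing block bounds
\begin{align*}
\|\dhk\dvl\pa_3v^3\|_{L^2}&\le c_{k,\ell}\,2^{k(3\al(r)-\theta)}2^{\ell\theta}\,\|\pa_3v^3\|_{\htr},\\
\|\dhk\dvl\pa_3v^3\|_{L^2}&\le d_{k,\ell}\,2^{k(3\al(r)-\theta)}2^{\ell\theta}(2^k+2^\ell)^{-1}\|\nabla\pa_3v^3\|_{\htr},
\end{align*}
with $(c_{k,\ell}),(d_{k,\ell})\in\ell^2(\Z^2)$ of norm at most one. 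I would then introduce a threshold $M>0$ to be optimised later and apply the first bound on $\{2^k+2^\ell\le M\}$, the second on $\{2^k+2^\ell>M\}$. A direct Cauchy--Schwarz summation in each regime yields
$$
\text{(low part)}\lesssim\|\pa_3v^3\|_{\htr}\,M^{1-s-\beta},\qquad
\text{(high part)}\lesssim\|\nabla\pa_3v^3\|_{\htr}\,M^{-(s+\beta)},
$$
where the positivity of $\alpha_{\rm h}\eqdef 3\al(r)-\theta-s$ and $\alpha_{\rm v}\eqdef 1-3\al(r)-\beta+\theta$ (which follow from $s<\tfrac23\al(r)$, $\theta\in\,]0,\al(r)[$ and $\beta<1-3\al(r)+\theta$) guarantees convergence of the low sum, and $s+\beta>0$ guarantees convergence of the high sum. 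Balancing at $M=\|\nabla\pa_3v^3\|_{\htr}/\|\pa_3v^3\|_{\htr}$ delivers the desired product $\|\pa_3v^3\|_{\htr}^{s+\beta}\|\nabla\pa_3v^3\|_{\htr}^{1-s-\beta}$.

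For $v^{\rm h}_{\rm curl}=\nabla_{\rm h}^\perp\Delta_{\rm h}^{-1}\omega$ the same Bernstein reduction gives
$$
\|v^{\rm h}_{\rm curl}\|_{(\dB^{1-s}_{2,1})_{\rm h}(\dB^{1-3\al(r)-\beta}_{2,1})_{\rm v}}\lesssim\sum_{k,\ell\in\Z}2^{-ks+\ell(1-3\al(r)-\beta)}\|\dhk\dvl\omega\|_{L^2}.
$$
The role of the two $\htr$-bounds is now played by the Gagliardo--Nirenberg estimate \eqref{estimbasomega34.0} applied at two different indices $\sigma$ in its admissible interval $[-3\al(r),1-\al(r)]$: a small $\sigma_1$ on the low-frequency block (producing a high power of $\|\omega_{r/2}\|_{L^2}$) and a larger $\sigma_2$ on the high-frequency block (producing a high power of $\|\nabla\omega_{r/2}\|_{L^2}$). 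The matching interpolation index is $\sigma=1-3\al(r)-s-\beta$, and the hypotheses $s<\tfrac23\al(r)$ together with $\beta<1-\tfrac83\al(r)$ are exactly what places this index inside $[-3\al(r),1-\al(r)]$. Splitting the sum at the threshold $M=\|\nabla\omega_{r/2}\|_{L^2}/\|\omega_{r/2}\|_{L^2}$ and optimising then yields the first term $\|\omega_{r/2}\|_{L^2}^{2\al(r)+s+\beta}\|\nabla\omega_{r/2}\|_{L^2}^{1-s-\beta}$ on the right-hand side.

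The principal technical obstacle is the convergence of the doubly infinite dyadic sums over $\Z^2$: after the low/high split, the net exponents appearing in $k$ and $\ell$ must be strictly positive on the low side (so that the sum converges at $k,\ell\to-\infty$) and strictly negative enough on the high side (so that the gain $(2^k+2^\ell)^{-1}$ or $(2^k+2^\ell)^{-\sigma}$ absorbs the residual growth). This is precisely where the combined constraints $s+\beta>0$, $\beta<1-3\al(r)+\theta$, $\beta<1-\tfrac83\al(r)$, $s<\tfrac23\al(r)$ and $\theta\in\,]0,\al(r)[$ all come into play. Once these balances are verified the remainder of the argument reduces to routine applications of Bernstein's lemma, H\"older's inequality and optimisation in $M$.
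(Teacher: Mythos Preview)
Your proposal is correct and follows precisely the route the paper indicates: the paper does not give a self-contained proof of this lemma but states that it ``follows immediately by combining the proofs'' of Proposition~2.1 in \cite{LZ3} and Proposition~3.1 in \cite{CZZ}, which is exactly the merge you describe---splitting $v^{\rm h}$ via \eqref{a.1wrt}, handling $v^{\rm h}_{\rm div}$ by the $\htr$-threshold argument and $v^{\rm h}_{\rm curl}$ by the nonlinear interpolation \eqref{estimbasomega34.0}. The exponent checks you record (positivity of $3\al(r)-\theta-s$ and $1-3\al(r)-\beta+\theta$, and $s+\beta>0$ for the high-frequency tail) are the right ones.
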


At the end of this section, let us recall the para-differential
decomposition (Bony's decomposition) from \cite{Bo81}: let $a$ and~$b$ be in $\cS'(\R^3)$,
then we have the following decomposition
\begin{equation}\label{pd}\begin{split}
 &\qquad\qquad\qquad ab=T(a,b)+\bar{T}(a,b)+ R(a,b)\quad\mbox{with}\\
& T(a,b)=\sum_{j\in\Z}S_{j-1}a\Delta_jb, \quad
\bar{T}(a,b)=T(b,a), \quad R(a,b)=\sum_{j\in\Z}\Delta_ja\widetilde{\Delta}_{j}b.
\end{split}\end{equation}
We shall also use Bony's decomposition in horizontal variables or
vertical variable, in order to study product laws
between distributions in anisotropic Besov spaces.

\setcounter{equation}{0}
\section{The proofs of the estimates \eqref{S4eq1}-\eqref{S4eq6}}\label{apA}

\begin{lem}\label{S4lem0}
{\sl Let $q_3>r',$ $\al(r)<\min\bigl\{\f1p,\f1{q_3}\bigr\}$ and $\d\in \bigl]0, \f1{q_3}-\al(r)\bigr[.$ Then we have
 \begin{align*}
 &\|S_{k}^\h \D_{\ell}^\v\omro\|_{L^{\infty}_\h(L^{r'}_\v)}\lesssim c_{(k,\ell),r'}2^{k\left(\d+\f2p-2\al(r)\right)}2^{-\ell\d}\bigl\|\omega_{\frac{r}2}
 \bigr\|_{\dot H^\s}^{\f2{r'}},\\
 &\|S_k^\h S_\ell^\v\omro\|_{L^\infty_\h(L^{m_1}_\v)}\lesssim c_{(k,\ell),r'}2^{k\left(\d+\f2p-2\al(r)\right)}2^{\ell\left(\f1{q_3}-\d\right)}\bigl\|\omega_{\frac{r}2}\bigr\|_{H^\s}^{\f2{r'}}\
 \mbox{with}\
 \f1{m_1}=\f1{r'}-\f1{q_3},\\
 &\|S_k^\h S_\ell^\v\omro\|_{L^\infty_\h(L^{m_2}_\v)}\lesssim c_{(k,\ell),r'}2^{k\left(\d+\f2p-2\al(r)\right)}2^{\ell\left(\f1{q_3}-\al(r)-\d\right)}\bigl\|\omega_{\frac{r}2}\bigr\|_{H^\s}^{\f2{r'}}\
 \mbox{with}\ \f1{m_2}=\f1{2}-\f1{q_3}.
 \end{align*}
 Here and in all that follows, we always denote $\left( c_{(k,\ell),r} \right)_{k,\ell\in\Z^2}$ to be a generic element
 in $\ell^r(\Z^2)$ so that $\sum_{(k,\ell)\in\Z^2}c_{(k,\ell),r}^r=1.$ In particular, when $r=2,$
 $\left( c_{(k,\ell),2} \right)_{k,\ell\in\Z^2}$ is the same to 
 the $\left( c_{k,\ell} \right)_{k,\ell\in\Z^2}$} defined before.
 \end{lem}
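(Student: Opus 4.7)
The strategy is to bootstrap the isotropic estimate~\eqref{anisotropicomega}, namely $\|\omega_{r-1}\|_{\dot B^{2\sigma/r'}_{r',r'}}\lesssim\|\omega_{r/2}\|_{\dot H^\sigma}^{2/r'}$ with $\sigma=r'(\tfrac12-\tfrac1p)$ (so that $2\sigma/r'=1-\tfrac2p$), into an anisotropic Besov bound, and then to deduce the three target estimates via the anisotropic Bernstein inequality (Lemma~\ref{lemBern}) together with summation on the low-frequency cut-offs $S_k^\h$ and $S_\ell^\v$. Throughout I set $s_\h=1-\tfrac2p-\delta$ and $s_\v=\delta$; both are strictly positive under the hypotheses, since $\delta<1/q_3-\alpha(r)<1/r'=1/2-\alpha(r)<1-2/p$.

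The first step is the anisotropic upgrade
\[
\|\omega_{r-1}\|_{\bigl(\dot B^{s_\h}_{r',r'}\bigr)_\h\bigl(\dot B^{s_\v}_{r',r'}\bigr)_\v}\lesssim\|\omega_{r/2}\|_{\dot H^\sigma}^{2/r'}.
\]
The key observation is that $\dhk\dvl\Delta_j\omega_{r-1}$ is nonzero only when $j\sim\max(k,\ell)$ (because $|\xi|^2=|\xi_\h|^2+|\xi_3|^2\sim\max(2^{2k},2^{2\ell})$), so $\|\dhk\dvl\omega_{r-1}\|_{L^{r'}}\lesssim\sum_{j\sim\max(k,\ell)}\|\Delta_j\omega_{r-1}\|_{L^{r'}}$. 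Computing
\[
\sum_{k,\ell}2^{r'(ks_\h+\ell s_\v)}\|\dhk\dvl\omega_{r-1}\|_{L^{r'}}^{r'}\lesssim\sum_j\|\Delta_j\omega_{r-1}\|_{L^{r'}}^{r'}\sum_{\max(k,\ell)\sim j}2^{r'(ks_\h+\ell s_\v)},
\]
the positivity of $s_\h,s_\v$ bounds the inner geometric sum by $2^{r'j(s_\h+s_\v)}=2^{r'j\cdot 2\sigma/r'}$, and \eqref{anisotropicomega} closes the argument. Re-reading this as a blockwise bound, there is a sequence $(c_{(k,\ell),r'})\in\ell^{r'}(\Z^2)$ with
\[
\|\dhk\dvl\omega_{r-1}\|_{L^{r'}}\lesssim c_{(k,\ell),r'}\,2^{-k(1-2/p-\delta)}\,2^{-\ell\delta}\,\|\omega_{r/2}\|_{\dot H^\sigma}^{2/r'}.
\]

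The three estimates then follow mechanically. For the first, I apply horizontal Bernstein $\|\dhkp g\|_{L^\infty_\h}\lesssim 2^{2k'/r'}\|\dhkp g\|_{L^{r'}_\h}$; combined with the identity $2/r'-1=-2\alpha(r)$ this yields the blockwise bound $\|\dhkp\dvl\omega_{r-1}\|_{L^\infty_\h(L^{r'}_\v)}\lesssim c_{(k',\ell),r'}\,2^{k'(\delta+2/p-2\alpha(r))}\,2^{-\ell\delta}\,\|\omega_{r/2}\|_{\dot H^\sigma}^{2/r'}$. The exponent $\delta+2/p-2\alpha(r)$ is strictly positive because $\alpha(r)<1/p$, so summing over $k'\le k-1$ in $S_k^\h$ reproduces the same top power $2^{k(\delta+2/p-2\alpha(r))}$; the resulting coefficients remain in $\ell^{r'}(\Z^2)$ by a discrete Young's inequality with the $\ell^1(\Z)$ kernel $j\mapsto 2^{j(\delta+2/p-2\alpha(r))}\mathbf{1}_{j\le-1}$. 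For the remaining two estimates, I use vertical Bernstein $\|\dvlp g\|_{L^{m_i}_\v}\lesssim 2^{\ell'(1/r'-1/m_i)}\|\dvlp g\|_{L^{r'}_\v}$ with $1/r'-1/m_1=1/q_3$ and $1/r'-1/m_2=1/q_3-\alpha(r)$ (the latter using $1/r'=1/2-\alpha(r)$), then sum over $\ell'\le\ell-1$ in $S_\ell^\v$ in exactly the same way. Positivity of the net exponents $1/q_3-\delta$ and $1/q_3-\alpha(r)-\delta$, which is needed for the low-frequency $\ell'$-sums to be dominated by their top terms, is guaranteed precisely by the assumption $\delta\in\bigl]0,\tfrac1{q_3}-\alpha(r)\bigr[$. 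The only conceptual step is the anisotropic upgrade above; the remainder is bookkeeping with Bernstein inequalities and discrete convolution estimates.
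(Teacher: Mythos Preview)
Your proof is correct and follows essentially the same route as the paper: both establish the anisotropic bound $\|\omega_{r-1}\|_{\dot B^{1-2/p-\delta,\,\delta}_{r',r'}}\lesssim\|\omega_{r/2}\|_{\dot H^\sigma}^{2/r'}$ and then reach the three inequalities by horizontal/vertical Bernstein followed by summation over the low-frequency indices. The only cosmetic difference is that the paper obtains the anisotropic upgrade by invoking Lemma~\ref{lem4.3ofCZ5} together with~\eqref{anisotropicomega}, whereas you reprove that lemma directly via the $j\sim\max(k,\ell)$ argument.
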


 \begin{proof} Note that $\al(r)<\f1p$ and $\d>0,$ we get, by applying Lemma \ref{lemBern} and Lemma \ref{lem4.3ofCZ5}, that
 \beno
 \begin{split}
 \|S_{k}^\h \D_{\ell}^\v\omro\|_{L^{\infty}_\h(L^{r'}_\v)}\lesssim &\sum_{k'\leq k-1}2^{\f{2k'}{r'}}\|\D_{k'}^\h\D_{\ell'}^\v \omro\|_{L^{r'}}\\
 \lesssim &\sum_{k'\leq k-1}c_{(k',\ell'),r'}2^{k'\left(\d+\f2p-2\al(r)\right)}2^{-\ell'\d}\|\omro\|_{B_{r',r'}^{1-\f2p-\d,\d}}\\
 \lesssim &c_{(k,\ell),r'}2^{k\left(\d+\f2p-2\al(r)\right)}2^{-\ell\d}\bigl\|\omega_{\frac{r}2}\bigr\|_{H^\s}^{\f2{r'}}.
 \end{split}
 \eeno This proves the first inequality of the lemma.
 Applying Lemma \ref{lemBern} once again yields
 \beno
 \begin{split}
 \|S_k^\h S_\ell^\v\omro\|_{L^\infty_\h(L^{m_1}_\v)}\lesssim & \sum_{\ell'\leq\ell-1}2^{\ell'\left(\f1{r'}-\f1{m_1}\right)}
\|S_{k}^\h \D_{\ell'}^\v\omro\|_{L^{\infty}_\h(L^{r'}_\v)} \\
 \lesssim &\sum_{k'\leq k-1}\sum_{\ell'\leq\ell-1}c_{(k',\ell'),r'} 2^{k'\left(\d+\f2p-2\al(r)\right)}2^{\ell'\left(\f1{q_2}-\d\right)}\bigl\|\omega_{\frac{r}2}\bigr\|_{H^\s}^{\f2{r'}}\\
 \lesssim & c_{(k,\ell),r'}2^{k\left(\d+\f2p-2\al(r)\right)}2^{\ell\left(\f1{q_3}-\d\right)}\bigl\|\omega_{\frac{r}2}\bigr\|_{H^\s}^{\f2{r'}}.
 \end{split}
 \eeno
 This proves the second inequality of the lemma. The remaining one can be proved along the same line.
\end{proof}

\begin{proof}[Proof of  \eqref{S4eq1}]  We first get, by applying Bony's decomposition in the vertical variable, that
\beno
\label{S4eq2}
\Th(\omro,a)=\Th(\Tv+\bar{T}^\v+R^\v)(\omro,a).
\eeno
By using Lemma \ref{S4lem0} and Lemma \ref{lemBern}, we obtain
\beno
\begin{split}
\|\D_k^\h\D_\ell^\v\Th\Tv(\omro,a)& \|_{L^{r'}}
\lesssim\sum_{|k'-k|\leq 4}\sum_{|\ell'-\ell|\leq 4}\|S_{k'-1}^\h S_{\ell'-1}^\v\omro\|_{L^{\infty}_\h(L^{m_1}_\v)}
\|\D_{k'}^\h\D_{\ell'}^\v a\|_{L^{r'}_\h(L^{q_2}_\v)}\\
&\lesssim\sum_{|k'-k|\leq 4}\sum_{|\ell'-\ell|\leq 4}
2^{2k'\al(r)}2^{-k'\left(\f2p+\mu\right)} 2^{-\ell'\left(\f1{q_2}-\mu\right)}
\|a\|_{\bigl(\dB_{2,\frac{2r'}{r'-2}}^{\f2p+\mu}\bigr)_\h
\bigl(B_{q_2,\frac{2r'}{r'-2}}^{\f1{q_2}-\mu}\bigr)_{\rm v}}\\
&\qquad\qquad\times 2^{k'\left(\d+\f2p-2\al(r)\right)}2^{\ell'\left(\f1{q_2}-\d\right)}
\bigl\|\omega_{\frac{r}2}\bigr\|_{H^\s}^{\f2{r'}}
\cdot c_{k',\ell'},
\end{split}
\eeno
where $m_1$ satisfies $\f1{m_1}=\f1{r'}-\f1q.$
Due to $q_2\in \bigl[2, \bigl(\mu+3\al(r)+1/p\bigr)^{-1}\bigr[,$ one has $\mu<\f1{q_2}-\al(r).$ So that
taking $\d=\mu$ in the above inequality leads to
\beno
\|\D_k^\h\D_\ell^\v\Th\Tv(\omro,a)\|_{L^{r'}}\lesssim c_{k,\ell}\|a\|_{\cB^{\mu,p}_{2,q_2,r}}\bigl\|\omega_{\frac{r}2}\bigr\|_{H^\s}^{\f2{r'}},
\eeno

Similarly, we have
\beno
\|\D_k^\h\D_\ell^\v\Th\bar{T}^\v(\omro,a)\|_{L^{r'}}\lesssim 
\sum_{|k'-k|\leq 4}\sum_{|\ell'-\ell|\leq 4}
\|S_{k'-1}^\h \D_{\ell'}^\v\omro\|_{L^{\infty}_\h(L^{r'}_\v)}
\|\D_{k'}^\h S_{\ell'-1}^\v a\|_{L^{r'}_\h(L^\infty_\v)},
\eeno
from which, Lemma \ref{S4lem0}, we deduce that $\Th\bar{T}^\v(\omro,a)$ shares the same estimate as $\Th\Tv(\omro,a).$

Whereas applying Lemma \ref{lemBern} and then Lemma \ref{S4lem0} yields
\beno
\begin{split}
\|\D_k^\h\D_\ell^\v\Th R^\v(\omro,a)\|_{L^{r'}}\lesssim & 2^{\f{\ell}{q_2}}
\sum_{|k'-k|\leq 4}\sum_{\ell'\geq \ell-3}
\|S_{k'-1}^\h \D_{\ell'}^\v\omro\|_{L^{\infty}_\h(L^{r'}_\v)}
\|\D_{k'}^\h \wt{\D}_{\ell'}^\v a\|_{L^{r'}_\h(L^{q_2}_\v)}\\
\lesssim & 2^{\f{\ell}{q_2}}\sum_{|k'-k|\leq 4}\sum_{\ell'\geq \ell-3}c_{k',\ell'} 2^{-\f{\ell'}q}\|a\|_{\cB^{\mu,p}_{2,q_2,r}}\bigl\|\omega_{\frac{r}2}\bigr\|_{H^\s}^{\f2{r'}}\\
\lesssim& c_{k,\ell}\|a\|_{\cB^{\mu,p}_{2,q_2,r}}\bigl\|\omega_{\frac{r}2}\bigr\|_{H^\s}^{\f2{r'}}.
\end{split}
\eeno
As a result, it comes out
\beq \label{S4eq3}
\|\Th(\omro,a)\|_{L^{r'}}\lesssim\|\Th(\omro,a)\|_{\dB^{0}_{r',2}}\lesssim \|a\|_{\cB^{\mu,p}_{2,q_2,r}}\bigl\|\omega_{\frac{r}2}\bigr\|_{H^\s}^{\f2{r'}}.
\eeq

Along the same line to proof of \eqref{S4eq3}, we deduce from Lemmas \ref{lemBern} and  \ref{S4lem0} that
\beno
\begin{split}
\|\D_k^\h\D_\ell^\v\Th\Tv(\omro,&a)\|_{L^{2}}\lesssim
\sum_{|k'-k|\leq 4}\sum_{|\ell'-\ell|\leq 4}
\|S_{k'-1}^\h S_{\ell'-1}^\v\omro\|_{L^{\infty}_\h(L^{m_2}_\v)}
\|\D_{k'}^\h\D_{\ell'}^\v a\|_{L^{2}_\h(L^{q_2}_\v)}\\
\lesssim &\sum_{|k'-k|\leq 4}\sum_{|\ell'-\ell|\leq 4}c_{k',\ell'}
2^{-k'\left(2\al(r)+\mu-\d\right)}2^{-\ell'\left(\al(r)-\mu+\d\right)}
\|a\|_{\cB^{\mu,p}_{2,q_2,r}}\bigl\|\omega_{\frac{r}2}\bigr\|_{H^\s}^{\f2{r'}},
\end{split}
\eeno
where $m_2$ satisfies $\f1{m_2}=\f1{2}-\f1{q_2}.$ Due to $\mu\in\bigl]\al(r),\frac1{q_2}-3\al(r)-\frac1p\bigr[$
and $\th\in]0,\al(r)[$, we have $\mu-\al(r)+\th\in \bigl]0,\f1{q_2}-\al(r)\bigr[.$
T hus we can take $\d=\mu-\al(r)+\th$ in the above inequality to get
\beno
\|\D_k^\h\D_\ell^\v\Th\Tv(\omro,a)\|_{L^{2}}\lesssim c_{k,\ell}2^{-k(3\al(r)-\th)}2^{-\ell\th}\|a\|_{\cB^{\mu,p}_{2,q_2,r}}\bigl\|\omega_{\frac{r}2}\bigr\|_{H^\s}^{\f2{r'}}.
\eeno

Similarly, for $\r_1=1/\al(r),$  we write
\beno
\|\D_k^\h\D_\ell^\v\Th\bar{T}^\v(\omro,a)\|_{L^{2}}
\lesssim \sum_{|k'-k|\leq 4}\sum_{|\ell'-\ell|\leq 4}
\|S_{k'-1}^\h \D_{\ell'}^\v\omro\|_{L^{\infty}_\h(L^{r'}_\v)}
\|\D_{k'}^\h S_{\ell'-1}^\v a\|_{L^{2}_\h(L^{\r_1}_\v)},
\eeno
and it is easy to observe from Lemma \ref{minusBesov},~Lemma \ref{lemBern} and $\mu>\al(r)$ that
\beno
\|\D_{k'}^\h S_{\ell'-1}^\v a\|_{L^{2}_\h(L^{\r_1}_\v)}\lesssim c_{(k',\ell'),\f{2r}{2-r}}2^{-k'\left(\mu+\f2p\right)}2^{\ell'\left(\mu-\al(r)\right)}\|a\|_{\cB^{\mu,p}_{2,q_2,r}},
\eeno
from which and Lemma \ref{S4lem0} with  $\d=\mu-\al(r)+\th$, we infer
\beno
\begin{split}
\|\D_k^\h\D_\ell^\v\Th\bar{T}^\v(\omro,a)\|_{L^{2}} \lesssim c_{k,\ell}2^{-k(3\al(r)-\th)}2^{-\ell\th}\|a\|_{\cB^{\mu,p}_{2,q_2,r}}\bigl\|\omega_{\frac{r}2}\bigr\|_{H^\s}^{\f2{r'}}.
\end{split}
\eeno
Finally applying Lemma \ref{lemBern} and then Lemma \ref{S4lem0} with  $\d=\mu-\al(r)+\th$ yields
\beno
\begin{split}
\|\D_k^\h\D_\ell^\v\Th R^\v(\omro,a)&\|_{L^{2}}\lesssim  2^{\ell\left(\f{1}{q_2}-\al(r)\right)}\sum_{\substack{|k'-k|\leq 4\\\ell'\geq \ell-3}}
\|S_{k'-1}^\h \D_{\ell'}^\v\omro\|_{L^{\infty}_\h(L^{r'}_\v)}
\|\D_{k'}^\h \wt{\D}_{\ell'}^\v a\|_{L^{2}_\h(L^{q_2}_\v)}\\
\lesssim & 2^{\ell \left(\f{1}{q_2}-\al(r)\right)}\sum_{\substack{|k'-k|\leq 4\\\ell'\geq \ell-3}}c_{k',\ell'}
2^{-k'\left(3\al(r)-\th\right)}2^{-\ell'\left(\th-\al(r)+\f1{q_2}\right)}
\|a\|_{\cB^{\mu,p}_{2,q_2,r}}\bigl\|\omega_{\f{r}2}\bigr\|_{H^\s}^{\f2{r'}}\\
\lesssim& c_{k,\ell}2^{k\left(3\al(r)-\th\right)}2^{-\ell\th}\|a\|_{\cB^{\mu,p}_{2,q_2,r}}\bigl\|\omega_{\frac{r}2}\bigr\|_{H^\s}^{\f2{r'}}.
\end{split}
\eeno
Hence, we achieve
\begin{equation*}\label{phthomro'}
\|\Th(\omro,a)\|_{\dH^{3\alpha(r)-\theta,\theta}}\lesssim \|a\|_{\cB^{\mu,p}_{2,q_2,r}}\bigl\|\omega_{\frac{r}2}\bigr\|_{H^\s}^{\f2{r'}}.
\end{equation*}
Together with \eqref{S4eq3}, we obtain \eqref{S4eq1}.
\end{proof}

\begin{proof}[Proof of  \eqref{S4eq5}] We first get, by applying Bony's decomposition
in the vertical variable, that
\beno
\begin{split}
&\Th({\partial_{\rm h} a},  \omega_{r-1})=\Th(\Tv+\bar{T}^\v+R^\v)({\partial_{\rm h} a},  \omega_{r-1}) \andf \\
 &\Th({\partial_{\rm h}\omega_{r-1} }, a)=\Th(\Tv+\bar{T}^\v+R^\v)({\partial_{\rm h}\omega_{r-1} }, a).
 \end{split}
\eeno
Since the estimates of the above terms are similar, we only present the estimates to the typical terms above.
Noting $\d_1\in \bigl]\mu-\al(r),1-2/p\bigr[,$ we can use Lemma \ref{S4lem0} to obtain
\begin{align*}
\bigl\|\D_k^\h& \D_\ell^\v\Th R^\v({\partial_{\rm h} a},  \omega_{r-1}) \bigr\|_{L^{\f{2r'}{2+r'}}_\h(L^2_\v)}
\lesssim\sum_{|k'-k|\leq 4}\sum_{\ell'\geq \ell-3}
\|S_{k'-1}^\h\D_{\ell'}^v\pa_\h a\|_{L^2_\h(L^q_\v)}\|\D_{k'}^\h\wt{\D}_{\ell'}^\v\omro\|_{L^{r'}_\h(L^{m_2}_\v)}\\
& \lesssim \sum_{|k'-k|\leq 4}\sum_{\ell'\geq \ell-3}
c_{k',\ell'}2^{-k'(\mu-\d_1)}2^{-\ell'(\d_1+\al(r)-\mu)}
\|\pa_\h a\|_{\bigl(\dB_{2,\frac{2r'}{r'-2}}^{-1+\f2p+\mu}\bigr)_\h
\bigl(B_{q_2,\frac{2r'}{r'-2}}^{\f1{q_2}-\mu}\bigr)_{\rm v}}\|\omro\|_{B^{1-\f2p-\d_1,\d_1}_{r',r'}}\\
& \lesssim c_{k,\ell}2^{-k(\mu-\d_1)}2^{-\ell(\d_1+\al(r)-\mu)}
\|a\|_{\cB_{2,q_2,r}^{\mu,p}}\bigl\|\omega_{\f{r}2}\bigr\|_{H^\s}^{\f2{r'}}.
\end{align*}

Along the same line, for $\rho_1=\f1{\al(r)},$ we infer
\beno
\begin{split}
\bigl\|\D_k^\h\D_\ell^\v\Th R^\v(\omega_{r-1}& ,{\partial_{\rm h} a})\bigr\|_{L^{\f{2r'}{2+r'}}_\h(L^2_\v)}
\lesssim \sum_{|k'-k|\leq 4}\sum_{\ell'\geq\ell-3}
\|S_{k'-1}^\h \D_{\ell'}^\v\pa_\h\omro\|_{L^{r'}}\|\D_{k'}^\h\D_{\ell'}^\v a\|_{L^2_\h(L^{\r_1}_\v)}\\
\lesssim &\sum_{|k'-k|\leq 4}\sum_{\ell'\geq\ell-3}
c_{k',\ell'}2^{-k'(\mu-\d_1)}2^{-\ell'(\d_1+\al(r)-\mu)}
\|a\|_{\cB^{\mu,p}_{2,q_2,r}}\|\omro\|_{B^{1-\f2p-\d_1,\d_1}_{r',r'}}\\
\lesssim & c_{k,\ell}2^{-k(\mu-\d_1)}2^{-\ell(\d_1+\al(r)-\mu)}
\|a\|_{\cB^{\mu,p}_{2,q_2,r}}\bigl\|\omega_{\f{r}2}\bigr\|_{H^\s}^{\f2{r'}}.
\end{split}
\eeno
The remaining terms can be handled along the same line.
\end{proof}

\begin{proof}[Proof of \eqref{S4eq6}] Applying Bony's decomposition in the vertical variable gives
 \beno
 R^\h({\partial_{\rm h} a},\omro)=\Rh(\Tv+\bar{T}^\v+R^\v)({\partial_{\rm h} a},\omro).
 \eeno
 We just present the estimate to the typical last term. Indeed, we have
 \beno
 \begin{split}
 \bigl\|\D_k^\h\D_\ell^\v\Rh R^\v(&{\partial_{\rm h} a},\omro)\bigr\|_{L^{\f{q_1r'}{q_1+r'}}_\h(L^2_\v)}
 \lesssim \sum_{k'\geq k-3}\sum_{\ell'\geq\ell-3}\|\D_{k'}^\h\D_{\ell'}^\v\pa_\h a\|_{L^{q_1}_\h(L^{q_2}_\v)}\|\wt{\D}_{k'}^\h\wt{\D}_{\ell'}^\v\omro\|_{L^{r'}_\h(L^{m_2}_\v)}\\
& \lesssim\sum_{k'\geq k-3}\sum_{\ell'\geq\ell-3} c_{k',\ell'}2^{-k'\left(\mu+\f2{q_1}-\d_2-1\right)}2^{-\ell'(\al(r)+\d_2-\mu)}
 \|a\|_{\cB_{q_1,q_2,r}^{\mu,p}}\|\omro\|_{B^{1-\f2p-\d_2,\d_2}_{r',r'}}\\
&\lesssim c_{k,\ell}2^{-k\left(\mu+\f2{q_1}-\d_2-1\right)}2^{-\ell(\al(r)+\d_2-\mu)}
 \|a\|_{\cB_{q_1,q_2,r}^{\mu,p}}\bigl\|\omega_{\f{r}2}\bigr\|_{H^\s}^{\f2{r'}}.
 \end{split}
 \eeno
 The remaining terms can be handled along the same line.
 \end{proof}

\bigbreak \noindent {\bf Acknowledgments.}
P. Zhang is partially supported
by NSF of China under Grants   11731007 and 11688101, Morningside Center of Mathematics of The Chinese Academy of Sciences and innovation grant from National Center for
Mathematics and Interdisciplinary Sciences.

\end{document}